\newtheorem{thm}{Theorem}
\newtheorem{lem}{Lemma}
\theoremstyle{definition}
\newtheorem{defn}{Definition}
\theoremstyle{remark}
\newtheorem{rem}{Remark}
\newtheorem{conjecture}{Conjecture}
\newtheorem{prop}{Proposition}
\newtheorem{cor}{Corollary}
\newtheorem{example}{Example}
\numberwithin{equation}{section}
\numberwithin{thm}{section}
\numberwithin{lem}{section}
\numberwithin{defn}{section}
\numberwithin{rem}{section}
\numberwithin{conjecture}{section}
\numberwithin{prop}{section}
\numberwithin{cor}{section}
\numberwithin{example}{section}
\newtheorem*{example1}{Example \ref{ex1}$^\prime$}
\newtheorem*{example2}{Example \ref{ex2}$^\prime$}
\newcommand{\RR}{\mathbb{R}}
\newcommand{\CC}{\mathbb{C}}
\newcommand{\itemizeEqnVSpacing}{\rule{0pt}{1pt}\vspace*{-12pt}}
\newcommand{\pts}{\mathcal P}
\newcommand{\incidences}{\mathcal I}
\newcommand{\sphrs}{\mathcal S}
\newcommand{\sing}{\operatorname{sing}}
\newcommand{\smooth}{\operatorname{smooth}}
\newcommand{\res}{\operatorname{res}}
\newcommand{\edges}{\mathcal{E}}
\newcommand{\crossing}{\mathcal{C}}
\newcommand{\vertices}{\mathcal{V}}
\newcommand{\BZ}{\mathbf{Z}}
\newcommand{\reali}{\operatorname{Reali}}
\newcommand{\mult}{\operatorname{mult}}
\newcommand{\grass}{\operatorname{Gr}}
\newcommand{\lines}{\mathcal{L}}
\newcommand{\cross}{\mathcal{C}}
\newcommand{\edgemult}{\mathrm{edgemult}}
\newcommand{\bdry}{\partial}
\begin{document}
\title{A Szemer\'edi-Trotter type theorem in $\RR^4$}
\author[J. Zahl]{Joshua Zahl} %
\address{Department of Mathematics, MIT, Cambridge MA 02139-4307, USA}
\email{jzahl@mit.edu}
\date{\today}
\subjclass[2000]{52C35, 52C10, 32S22}
\begin{abstract} 
We show that $m$ points and $n$ two-dimensional algebraic surfaces in $\mathbb{R}^4$ can have at most $O(m^{\frac{k}{2k-1}}n^{\frac{2k-2}{2k-1}}+m+n)$ incidences, provided that the algebraic surfaces behave like pseudoflats with $k$ degrees of freedom, and that $m\leq n^{\frac{2k+2}{3k}}$. As a special case, we obtain a Szemer\'edi-Trotter type theorem for 2--planes in $\mathbb{R}^4$, provided $m\leq n$ and the planes intersect transversely. As a further special case, we obtain a Szemer\'edi-Trotter type theorem for complex lines in $\mathbb{C}^2$ with no restrictions on $m$ and $n$ (this theorem was originally proved by T\'oth using a different method). As a third special case, we obtain a Szemer\'edi-Trotter type theorem for complex unit circles in $\mathbb{C}^2$. We obtain our results by combining several tools, including a two-level analogue of the discrete polynomial partitioning theorem and the crossing lemma.
\end{abstract}
\maketitle

\section{Introduction}
In \cite{Erdos}, Erd\H{o}s asked how many incidences could occur between a collection of $m$ points and $n$ lines in the plane. The correct asymptotic bound was found by Szemer\'edi and Trotter in \cite{Szemeredi}. They proved what is now known as the Szemer\'edi-Trotter theorem.
\begin{thm}[Szemer\'edi-Trotter]\label{szemerediTrotterThm}
The number of incidences between $m$ points and $n$ lines in $\RR^2$ is $O(m^{2/3}n^{2/3}+m+n)$. This bound is tight (up to the implicit constant in the $O(\cdot)$ notation).
\end{thm}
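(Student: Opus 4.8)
\noindent The plan is to prove Theorem~\ref{szemerediTrotterThm} by Sz\'ekely's argument, which extracts the bound directly from the crossing number inequality and thereby exhibits, in the simplest possible setting, the second of the two tools that drive this paper. Write $I$ for the number of incidences between the $m$ points $\pts$ and the $n$ lines $L$. First I would discard the lines meeting $\pts$ in at most one point, which together carry at most $n$ incidences; so I may assume every line of $L$ passes through at least two points of $\pts$. Next I would build a graph $G$, drawn in $\RR^2$, whose vertices are the points of $\pts$: for each line $\ell\in L$, listing the points $p_1,\dots,p_k$ of $\pts$ on $\ell$ in the order they occur along $\ell$, I add the $k-1$ segments $p_1p_2,\dots,p_{k-1}p_k$. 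This $G$ is simple (an edge $pq$ lies on the unique line through $p$ and $q$, so distinct lines contribute disjoint edge sets), it has $m$ vertices, and it has $|E(G)|=\sum_\ell(\#(\pts\cap\ell)-1)\ge I-2n$ edges.

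The key geometric observation is that in this drawing two edges can cross only at a point where two distinct lines of $L$ meet, and a pair of lines meets at most once, so there are fewer than $\binom n2<n^2/2$ crossings, hence $\operatorname{cr}(G)<n^2/2$. I would then split into two cases. If $|E(G)|\ge 4m$, the crossing number inequality gives $\operatorname{cr}(G)\ge|E(G)|^3/(64\,m^2)$, so $|E(G)|^3<32\,m^2n^2$ and therefore $I\le|E(G)|+2n=O(m^{2/3}n^{2/3}+n)$; if instead $|E(G)|<4m$, then $I\le|E(G)|+2n<4m+2n$. In either case $I=O(m^{2/3}n^{2/3}+m+n)$. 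Pleasantly, no case analysis on the relative sizes of $m$ and $n$ is needed: the degenerate ranges $n\ll\sqrt m$ and $m\ll\sqrt n$ are simply absorbed by the $O(m+n)$ term. For the classical statement there is thus no real obstacle; what I expect to be genuinely hard is transplanting the argument to $\RR^4$, where the crossing-number step must be fed by a polynomial-partitioning step and the two interact through delicate bookkeeping.

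For completeness I would also record the polynomial ham sandwich proof, which is the template for the body of the paper. After reducing to the range $\sqrt m\le n\le m^2$ via the elementary bound $I=O(m+n\sqrt m)$ (which follows from $\sum_{p\in\pts}\binom{d(p)}{2}\le\binom n2$ and Cauchy--Schwarz, where $d(p)$ is the number of lines of $L$ through $p$), I would choose a partitioning polynomial $P$ of degree $D\sim(m^2/n)^{1/3}$, so that $\RR^2\setminus\zro{P}$ consists of $O(D^2)$ cells, each meeting $\pts$ in $O(m/D^2)$ points. Incidences at points of $\zro{P}$ split into $O(nD)$ coming from lines not contained in $\zro{P}$ (each meets $\zro{P}$ in at most $D$ points) and $O(m+D^2)$ coming from the at most $D$ lines inside $\zro{P}$ (handled by the trivial bound $I\le|\pts|+|L|^2$). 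Incidences at points inside the cells are counted cell by cell using the trivial bound $I_i\le|L_i|+|\pts\cap\Omega_i|^2$ (with $L_i$ the lines meeting the $i$-th cell); since each line enters at most $D+1$ cells, these sum to $O\big(nD+D^2(m/D^2)^2\big)=O(nD+m^2/D^2)$, and the chosen $D$ balances every term at $O(m^{2/3}n^{2/3})$. The step I expect to require the most care here is organizing the within-cell count \emph{without} any pointwise control on the number of lines per cell — precisely the sort of accounting that becomes the principal obstacle in the higher-dimensional theorem proved below.
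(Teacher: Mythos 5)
Your first (crossing-number) proof is exactly Sz\'ekely's argument, which is precisely what the paper itself recalls at the start of Section~\ref{proofOfCrossingNumberIneqLemmaSection} as the prototype for Lemmas~\ref{STforLinesOnHomeo} and~\ref{crossingNumberIneqLemma}; the only cosmetic difference is that you discard lines carrying at most one incidence, whereas the paper's exposition introduces auxiliary vertices where lines exit a large disk and then deletes them, and both variants lose only $O(n)$ edges. So the proposal is correct and takes essentially the same route as the paper. (One small slip in the bonus polynomial-partitioning proof: the K\H{o}vari--S\'os--Turan bound $I=O(m+n\sqrt{m})$ only absorbs the end $n\leq\sqrt{m}$, not $n\geq m^2$, for which you need its dual $I=O(n+m\sqrt{n})$; this does not affect the primary argument.)
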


This theorem has seen a number of additional proofs, including one by Sz\'ekely \cite{Szekely} which used the crossing lemma (see \cite{Ajtai,Leighton}). In \cite{Pach}, Pach and Sharir built off Sz\'ekely's ideas and proved a Szemer\'edi-Trotter type theorem for curves with $k$ degrees of freedom.
\begin{defn}
Let $\pts\subset\RR^2$ be a set of points and let $\Gamma$ be a set of simple open plane curves. We say the collection $(\pts,\Gamma)$ has $k$ degrees of freedom and multiplicity type $C_0$ if the following conditions hold.
\begin{itemize}
\item For any $k$ distinct points from $\pts$, there are at most $C_0$ curves from $\Gamma$ passing through all of them.
\item Any pair of curves from $\Gamma$ intersect in at most $C_0$ points.
\end{itemize}
\end{defn}

\begin{thm}[Pach-Sharir \cite{Pach}]
Let $\pts$ be a collection of $m$ points and let $\Gamma$ be a collection of $n$ simple open plane curves. Suppose that $(\pts,\Gamma)$ has $k$ degrees of freedom and multiplicity type $C_0$. Then the number of incidences between the points and curves is
\begin{equation*}
O\Big(m^{\frac{k}{2k-1}}n^{\frac{2k-2}{2k-1}}+m+n\Big).
\end{equation*}
The implicit constant depends only on $k$ and $C_0$.
\end{thm}

Similar bounds had previously been known in the special case that the curves in $\Gamma$ were real algebraic curves of bounded degree \cite{Clarkson,Pach2}. In this paper we will discuss a variant of the above theorems that applies to certain families of points and two-dimensional algebraic surfaces in $\RR^4.$ First, we will need several definitions. 
\begin{defn}\label{goodSurfacesDefn}
Let $\sphrs$ be a collection of two-dimensional real algebraic surfaces in $\RR^4$ and let $C_0\geq 1,\ k\geq 1$ be integers. We say that $\sphrs$ is a $C_0$--\emph{good collection of pseudoflats with $k$ degrees of freedom} if the following conditions are satisfied
 \begin{enumerate}[label=(\roman{*}), ref=(\roman{*})]
 \item\label{smoothSurfacesSmooth} Every surface in $\sphrs$ is smooth (i.e.~it is a two-dimensional manifold) and can be defined by a (single) polynomial of degree at most $C_0$.
 \item\label{noCommonComponents} If $S,S^\prime\in\sphrs$ then $|S\cap S^\prime|\leq C_0$.
 \item\label{tangentSpacesTransverse} If $p_1,\ldots,p_k\in\RR^4$ are distinct points, then at most $C_0$ surfaces from $\sphrs$ contain each of the points $p_1,\ldots,p_k$.
\end{enumerate}
\end{defn}

Given a collection of points $\pts\subset\RR^4$ and a collection of surfaces $\sphrs$ in $\RR^4$, we define the set of incidences between $\pts$ and $\sphrs$ to be
\begin{equation*}
\mathcal{I}(\pts,\sphrs)=\{(p,S)\in\pts\times\sphrs\colon p\in S\}.
\end{equation*}
\begin{defn}\label{goodCollectionIncidencesDefn}
Let $\pts$ be a collection of points, let $\sphrs$ be a collection of two-dimensional real algebraic surfaces in $\RR^4$, and let $I\subset \mathcal{I}(\pts,\sphrs)$. We say that $I$ is a \emph{good collection of incidences} if whenever $(p,S),(p,S^\prime)\in I$, we have that $T_p(S)\cap T_p(S^\prime)=p$, i.e. whenever two surfaces are incident to a common point, their tangent planes intersect transversely.
\end{defn}

We are now ready to state our results.
\begin{thm}[Point-surface incidences in $\RR^4$]\label{surfacesInR4Thm}
Let $\pts\subset\RR^4$ be a collection of $m$ points. Let $\sphrs$ be a $C_0$-good collection of pseudoflats with $k$ degrees of freedom. Let $n=|\sphrs|$, and suppose $m\leq n^{\frac{2k+2}{3k}}$. Let $I\subset\incidences(\pts,\sphrs)$ be a good collection of incidences. Then
\begin{equation}\label{surfacesInR4ThmIntm}
|I| \leq C_1\Big(m^{\frac{k}{2k-1}}n^{\frac{2k-2}{2k-1}}+m+n\Big).
\end{equation}
The constant $C_1$ depends only on $C_0$ and $k$.
\end{thm}
\begin{rem}
The requirement that $m\leq n^{\frac{2k+2}{3k}}$ is a limitation arising from our proof techniques. We conjecture that the inequality holds for all $m$ and $n$. For $m\geq n^2$, the inequality is trivial. In section \ref{openProblemsSec}, we will discuss progress towards weakening the above restriction on $m$ and $n$.
\end{rem}

\begin{cor}[The $k=2$ case]\label{szemerediTrotterKIs2}
 Let $\pts\subset\RR^4$ be a collection of $m$ points. Let $\sphrs$ be a $C_0$-good collection of pseudoflats with $2$ degrees of freedom. Let $n=|\sphrs|$, and suppose $m\leq n$. Let $I\subset\incidences(\pts,\sphrs)$ be a good collection of incidences. Then
\begin{equation}\label{surfacesInR4ThmIntm}
|I| = O\big(m^{2/3}n^{2/3}+m+n\big).
\end{equation}
The implicit constant depends only on $C_0$.
\end{cor}

\begin{cor}[$2$--planes in $\RR^4$]\label{szemerediTrotter2Planes}
Let $\pts\subset\RR^4$ be a collection of $m$ points and let $\sphrs$ be a collection of $n$ 2--planes in $\RR^4$ such that any two planes meet in at most one point. Suppose that $m\leq n$. Then the number of point-plane incidences is
\begin{equation}
O\big(m^{2/3}n^{2/3}+m+n\big).
\end{equation}
\end{cor}

We can use Corollary \ref{szemerediTrotter2Planes} to recover the Szemer\'edi-Trotter theorem for complex lines in $\CC^2$, which was originally proved by T\'oth in \cite{Toth}. Note that by point-line duality in $\CC^2$, we can always assume that the number of lines is at least as great as the number of points. Thus we have:
\begin{cor}[complex lines]
Let $\pts$ be a collection of $m$ points and let $\sphrs$ be a collection of $n$ (complex) lines in $\CC^2$. Then the number of point-line incidences is
\begin{equation}
O\big(m^{2/3}n^{2/3}+m+n\big).
\end{equation}
\end{cor}

As another corollary,  we obtain a Szemer\'edi-Trotter type theorem for complex unit circles in the complex plane. If $z=(z_1,z_2)\in\CC^2$, we can define the complex unit circle centered at $z$ to be the set $C_z=\{(w_1,w_2)\in\CC^2\colon (z_1-w_1)^2+(z_2-w_2)^2=1\}$. If we identify $\CC^2$ with $\RR^4$, then $C_z$ becomes a smooth two-dimensional surface defined by a single polynomial of degree 4. If $\sphrs$ is a collection of such surfaces, then $\sphrs$ is a $4$-good collection of pseudoflats with 2 degrees of freedom. If $\pts\subset\RR^4$ is a collection of points, then we can partition $\incidences(\pts,\sphrs)$ into $O(1)$ collections $I_1,\ldots,I_{O(1)}$ so that each is a good collection of incidences (see
\cite[Corollary 2.7]{Solymosi} for details). Finally, if $\pts\subset\CC^2$ is a collection of points and $\sphrs\subset\CC^2$ is a collection of (complex) unit circles, then by point-unit circle duality we can always assume that $|\pts|\leq|\sphrs|$. Thus we can apply Theorem \ref{surfacesInR4Thm} to each of the collections $I_1,\ldots,I_{O(1)}$ to conclude the following:
\begin{cor}[Complex unit circles]
Let $\pts\subset\CC^2$ be a collection of $m$ points, and let $\sphrs$ be a collection of $n$ complex unit circles in $\CC^2$. Then the number of point-circle incidences is $O\big(m^{2/3}n^{2/3}+m+n\big)$.
\end{cor}

\subsection{Previous work}
In \cite{Toth}, T\'oth extended Szemer\'edi and Trotter's original proof to the complex plane. T\'oth's result is specific to complex lines, so it does not (for example) apply to complex unit circles. Solymosi and Tardos \cite{Solymosi3} gave a simpler proof of the same bound in the special case where the point set is a Cartesian product of the form $A\times B\subset\CC^2$.

Edelsbrunner and Sharir \cite{Edelsbrunner} obtained incidence results for certain configurations of points and codimension--one hyperplanes in $\RR^4$, and \L{}aba and Solymosi \cite{Laba} obtained incidence bounds for points and a general class of two-dimensional surfaces in $\RR^3$, provided the points satisfied a certain homogeneity condition.

Elekes and T\'oth \cite{Elekes} and later Solymosi and T\'oth \cite{Solymosi2} obtained incidence results between points and hyperplanes in $\RR^d$, again provided the points satisfied various non-degeneracy and homogeneity conditions.

In \cite{Solymosi}, Solymosi and Tao  used the discrete polynomial partitioning theorem (this is \cite[Theorem 4.1]{Guth}, also Theorem \ref{GuthKatzPartitioning}) to obtain bounds for the number of incidences between points and bounded degree algebraic surfaces satisfying certain non-degeneracy and pseudoflat conditions (i.e.~they behaved similarly to hyperplanes). Aside from an $\epsilon$ loss in the exponent, Solymosi and Tao's result resolved a conjecture of T\'oth on the number of incidences between points and $d$--flats in $\RR^{2d}$ (T\'oth conjectured that Solymosi and Tao's result should hold without the $\epsilon$ loss in the exponent \cite[Conjecture 3]{TothV1}). The discrete polynomial partitioning theorem was also used by the author in \cite{Zahl} to obtain incidence results between points and two-dimensional surfaces in $\RR^3$ (with no homogeneity condition), and by Kaplan et al.~ in \cite{Kaplan2} to obtain similar bounds on the number of incidences between points and spheres in $\RR^3.$
\subsection{Proof sketch}
To keep the proof sketch simple, we shall assume that the surfaces in $\sphrs$ are 2--planes, and that every pair of 2--planes are either disjoint or intersect transversely. The actual proof of the theorem (presented in the following sections) will not make these assumptions. The basic idea is as follows. By the assumption that 2--planes must intersect transversely, there can be at most one 2-plane passing through any pair of points. Thus we can use the Cauchy-Schwarz inequality to obtain a rudimentary bound on the cardinality of any collection of point-surface incidences. We will call this the Cauchy-Schwarz bound.

Using the discrete polynomial partitioning theorem, we find a polynomial $P$ of controlled degree (the degree will be a suitable power of $m$ and $n$) so that $\RR^4\backslash \BZ_{\RR}(P)$ is a union of open \emph{cells}, such that each cell contains roughly the same number of points from $\pts$, and no surface from $\sphrs$ enters too many cells (here $\BZ_{\RR}(P)$ is the set of points in $\RR^4$ at which $P$ vanishes). We can then apply the Cauchy-Schwarz bound within each cell. This allows us to count the incidences occurring between surfaces and points in $\pts\backslash \BZ_{\RR}(P)$. In order to count the remaining incidences, we perform a second level polynomial partitioning decomposition on the variety $\BZ_{\RR}(P)$. This gives us a polynomial $Q$ which cuts $\BZ_{\RR}(P)$ into a collection of three-dimensional cells, which are open in the relative (Euclidean) topology of $\BZ_{\RR}(P)$. We then apply the Cauchy-Schwarz bound to each of these three-dimensional cells. The only incidences left to count are those between surfaces in $\sphrs$ and points in $\pts\cap \BZ_{\RR}(P)\cap \BZ_{\RR}(Q)$.

We can choose $P$ and $Q$ in such a way that $\BZ_{\RR}(P)\cap \BZ_{\RR}(Q)$ is a two-dimensional variety in $\RR^4$. Let $S$ be a 2--plane from $\sphrs.$ Then $S$ will intersect $\BZ_{\RR}(P)\cap \BZ_{\RR}(Q)$ in a union of isolated points (proper intersections) and one-dimensional curves (non-proper intersections); the case where $S$ meets $\BZ_{\RR}(P)\cap \BZ_{\RR}(Q)$ in a two-dimensional variety can be dealt with easily. The number of isolated points in the intersection can be bounded by the degrees of the polynomials $P$ and $Q$ (we are working over $\RR,$ where B\'ezout's theorem need not hold, so we need to be a bit careful). Thus the number of incidences between points $p\in \pts\cap \BZ_{\RR}(P)\cap \BZ_{\RR}(Q)$ and surfaces $S\in\sphrs$ such that $p$ is an isolated point of $S\cap \BZ_{\RR}(P)\cap \BZ_{\RR}(Q)$ can be bounded.

The only remaining task is to bound the number of incidences between points of $\pts\cap \BZ_{\RR}(P)\cap \BZ_{\RR}(Q)$ and one-dimensional curves arising from the intersection of $\BZ_{\RR}(P)\cap \BZ_{\RR}(Q)$ and surfaces $S\in\sphrs$. To simplify the exposition, we will pretend (in this sketch only!) that $\BZ_{\RR}(P)\cap \BZ_{\RR}(Q)$ is a disjoint union of $N$ 2--planes, i.e.~$\BZ_{\RR}(P)\cap \BZ_{\RR}(Q)=\Pi_1\sqcup\ldots\sqcup\Pi_N$. Then for each plane $\Pi_i,$ $\Pi_i\cap S=L_{S,i}$ is a line on $\Pi_i$. It remains to count the number of incidences between $\pts\cap\Pi_i$ and $\{L_{S,i}\}_{S\in\sphrs}.$ The Szemer\'edi-Trotter theorem for lines in $\RR^2$ would give us the bound
\begin{equation}\label{2dST}
I(\pts\cap\Pi_i,\{L_{S,i}\}_{S\in\sphrs})=O\big( |\pts\cap\Pi_i|^{2/3}|\sphrs|^{2/3}+|\pts\cap\Pi_i|+|\sphrs|\big).
\end{equation}
However, if we sum \eqref{2dST} over the $N$ values of $i$, we have only bounded the number of incidences by
\begin{equation}\label{badBound}
O\big(N^{1/3}|\pts|^{2/3}|\sphrs|^{2/3}+|\pts|+|\sphrs|\big).
\end{equation}
Since $N$ can be quite large (for example, if $|\pts|=|\sphrs|$, then $N$ could be as large as $|\pts|^{1/3}$), this is not sufficient. Instead, recall Sz\'ekely's proof in \cite{Szekely} of the Szemer\'edi-Trotter theorem, which uses the crossing lemma (the crossing lemma and all other graph-related results are introduced in Section \ref{graphTheorySection} below). Loosely speaking, we consider the graph drawing $H_i$ on $\Pi_i$ whose vertices are the points of $\pts\cap\Pi_i$, and two vertices are connected by an edge if there is a line from $\{L_{i,S}\}_{S\in\sphrs}$ passing through the two points, and the two points are adjacent on the line (i.e.~there are no points in between them). Then the number of edges of the graph is comparable to the number of incidences between points and lines, and this is bounded by $\crossing(H_i)^{1/3}V_i^{2/3}$, where $\crossing(H_i)$ is the number of times two edges cross in the drawing $H_i$, and $V_i$ is the number of vertices of $H_i$. Thus in place of \eqref{2dST}, we have
\begin{equation}\label{2dSTCrossings}
I(\pts\cap\Pi_i,\{L_{S,i}\}_{S\in\sphrs})=O\big( |\pts\cap\Pi_i|^{2/3}|\crossing(H_i)|^{1/3}+|\pts\cap\Pi_i|+|\sphrs|\big).
\end{equation}
The key insight is that
\begin{equation}\label{boundOnCrossingNum}
\sum_{i}|\crossing(H_i)| \leq |\sphrs|^2.
\end{equation}
Indeed, every pair of 2--planes $S, S^\prime\in\sphrs$ can intersect in at most one point, and since we assumed the planes $\{\Pi_i\}$ composing $\BZ_{\RR}(P)\cap \BZ_{\RR}(Q)$ were disjoint, the intersection point of $S\cap S^\prime$ can occur on $\Pi_i$ for at most one index $i$. Thus we have
\begin{equation}
\begin{split}
\sum_i &I(\pts\cap\Pi_i,\{L_{S,i}\}_{S\in\sphrs})\\
&=O\Big( \sum_i\Big(|\pts\cap\Pi_i|^{2/3}|\crossing(H_i)|^{1/3}+|\pts\cap\Pi_i|+|\sphrs|\Big)\Big)\\
&=O\Big( \Big(\sum_i|\pts\cap\Pi_i|\Big)^{2/3}\Big(\sum_i|\crossing(H_i)|\Big)^{1/3}+\sum_i|\pts\cap\Pi_i|+\sum_i|\sphrs|\Big)\\
&=O\big( |\pts|^{2/3}|\sphrs|^{2/3}+|\pts|+N|\sphrs|\big)\\
&=O\big(m^{2/3}n^{2/3}+m+Nn\big).
\end{split}
\end{equation}

This is a much better bound than \eqref{badBound}, and it gives us the desired bound on the number of incidences between surfaces in $\sphrs$ and points lying on $\BZ_{\RR}(P)\cap \BZ_{\RR}(Q)$.

Unfortunately, the assumption that $\BZ_{\RR}(P)\cap \BZ_{\RR}(Q)$ is a disjoint union of 2--planes need not be true, and dealing with this difficulty will occupy the bulk of the paper. To handle this, we must cut $\BZ_{\RR}(P)\cap \BZ_{\RR}(Q)$ into pieces, each of which behaves like a 2--plane (more accurately, each piece is homeomorphic to an open set in $\RR^2$), and we need to prove a more general form of the (planar) Szemer\'edi-Trotter theorem which gives an incidence bound for an arrangement of points of curves based on the number of curve crossings, rather than the number of curves.

\subsection{Major tools and techniques}
We will give a brief overview of the main tools that will be used in the proof of Theorem \ref{surfacesInR4Thm}. First, we will require some results from real algebraic geometry. Specifically, we will make use of Barone and Basu's refined bounds on the number of sign conditions of a real algebraic variety. This will be discussed further in Section \ref{realAlgGeomSec}. Polynomial partitionings will also play a central role. These were first developed by Guth and Katz, and later extended by Kaplan, Matou\v{s}ek, Safernova, and Sharir, and by the author. These partitioning will be discussed in Section \ref{polySandwichSec}.

We will use some results from algebraic geometry and intersection theory. These will be discussed in Section \ref{forayAlgGeoSec}. We will also require some elementary results from differential geometry. This will be discussed in section \ref{diffGeomSec}. Finally, we will make use of the crossing lemma from topological graph theory. This will be discussed in Section \ref{graphTheorySection}.
\subsection{Notation}
Throughout this paper, $C,C_1,C_2,\ldots$ will denote large constants, and $c,c_1,$ $c_2,\ldots$ will denote small (positive) constants.
\begin{defn}\label{defnOflesssim}
We will say that $A\lesssim B$ or $A=O(B)$ if $A\leq CB$ for some constant $C$ that depends only on $C_0$ and $k$ from the statement of Theorem \ref{surfacesInR4Thm}, and possibly the ambient dimension $d$ (in the statement of Theorem \ref{surfacesInR4Thm} we have $d=4$. However we will sometimes state results in greater generality). Sometimes the constant $C$ will appear to depend on other parameters as well. However, these additional parameters will ultimately only depend on $C_0$ and $k$. If $A\lesssim B$ and $B \lesssim A$, we say $A\approx B$ or  $A=\Theta(B)$ .
\end{defn}

\subsection{Thanks}
The author is very grateful to Saugata Basu, Kiran Kedlaya, Silas Richelson, Terence Tao, and Burt Totaro for helpful discussions. The author would like to especially thank the anonymous referees for their careful reading and numerous suggestions. Referee \#1 in particular went above and beyond the usual refereeing process, and the author is very grateful for the time and effort he or she put in. The author was supported in part by the Department of Defense through the National Defense Science \& Engineering Graduate Fellowship (NDSEG) Program.
\section{Preliminaries: real algebraic geometry and polynomial partitions}\label{preliminariesSection}
\subsection{Real algebraic geometry}\label{realAlgGeomSec}
\subsubsection{Ideals and varieties}
Unless otherwise noted, all polynomials will be (affine) real polynomials, i.e.~elements of $\RR[x_1,\ldots,x_d]$. In the first sections of this paper we will deal mainly with real affine varieties; in later sections we will be concerned with both real and complex varieties. Definitions and standard results about real algebraic varieties can be found in \cite{Basu, Bochnak}.

\begin{defn}
A (real) algebraic variety $Z\subset\RR^d$ is a set of the form $Z=\bigcap_{i=1}^\ell \{P_i=0\},$ where $P_1,\ldots,P_\ell\in\RR[x_1,\ldots,x_d]$ are polynomials. Note that we do not require varieties to be irreducible.
\end{defn}

\begin{defn}
If $J\subset\RR[x_1,\ldots,x_d]$ is an ideal, we define
\begin{equation*}
\BZ_{\RR}(J)=\{x\in\RR^d\colon f(x)=0\ \textrm{for all}\ f\in J\}.
\end{equation*}
By abuse of notation, if $P\in\RR[x_1,\ldots,x_d]$, then we define $\BZ_{\RR}(P) = \BZ_{\RR}((P))$, where $(P)$ is the ideal generated by $P$. Sometimes we will also need to work over $\CC$. We define $\BZ_{\CC}(J)$ analogously, with $\CC^d$ in place of $\RR^d$.
\end{defn}
\begin{defn}
If $Z\subset\RR^d$ is a variety, we define
\begin{equation*}
\mathbf{I}(Z) = \{P\in\RR[x_1,\ldots,x_d]\colon\ P(x)=0\ \textrm{for all}\ x\in Z\}.
\end{equation*}
\end{defn}

\subsubsection{Smooth points and dimension of a real variety}
\begin{defn}
Let $Z\subset\RR^d$ be a real algebraic variety, and let $z\in Z$. We define the dimension $\dim_{\RR,z}(Z)$ of $Z$ at $z$ as in Definition 2.8.11 of \cite{Bochnak}. Informally, if $\dim_{\RR,z}(Z)=e,$ then we can find a homeomorphism from a small (Euclidean) neighborhood of $z\in Z$ to the $e$--dimensional cube $(0,1)^e$. We define
\begin{equation*}
\dim_{\RR}(Z)=\sup_{z\in Z}\dim_{\RR,z}(Z).
\end{equation*}
\end{defn}

To avoid confusion, if $Z\subset\CC^d$ is a (complex) variety, we will denote the (complex) dimension of $Z$ by $\dim_{\CC}(Z)$.
\begin{defn}\label{smoothLocusDefn}
We define the smooth locus $Z_{\smooth}$ as in Section 3.3 of \cite{Bochnak} (to be more precise, we say a point $z$ is in $Z_{\smooth}$ if $z$ is smooth in dimension $e=\dim_{\RR}(Z)$). Informally, if $\mathbf{I}(Z)=(f_1,\ldots,f_\ell)\subset\RR[x_1,\ldots,x_d]$ and $e=\dim_{\RR}(Z)$, then $z\in Z$ is a smooth point of $Z$ if
\begin{equation*}
 \operatorname{rank}\left[\begin{array}{c}\nabla f_1\\\vdots\\\nabla f_n\end{array}\right]=d-e.
\end{equation*}
Here and throughout this paper, if $f\in\RR[x_1,\ldots,x_d]$, then $\nabla f$ is the vector-valued function $(\frac{df}{dx_1},\ldots,\frac{df}{dx_d})$. If $f\in\CC[x_1,\ldots,x_d]$, we can define $\nabla f$ similarly.
\end{defn}

If $z\in Z$ is a smooth point, then $Z$ is a $e$--dimensional real manifold in a small (Euclidean) neighborhood of $z$. However the converse need not hold.

\subsubsection{Real ideals}
Ideals and varieties over $\RR$ can have some rather pathological properties. Luckily, there is a class of ideals over $\RR$ that behave more sanely. Confusingly, these ideals are called \emph{real ideals}.

\begin{defn}\label{defnOfRealIdeal}
 An ideal $J\subset\RR[x_1,\ldots,x_d]$ is \emph{real} if for every sequence $a_1,\ldots,a_\ell\in\RR[x_1,\ldots,x_d]$, $a_1^2+\ldots+a_\ell^2\in J$ implies $a_j\in J$ for each $j=1,\ldots,\ell$.
\end{defn}
The following proposition shows that real principal prime ideals and their corresponding real varieties have some of the nice properties of ideals and varieties defined over $\CC$.
\begin{prop}[see {\cite[\S 4.5]{Bochnak}} ]\label{propertiesOfPrincipleRealIdealProp}
 Let $(P)\subset\RR[x_1,\ldots,x_d]$ be a principal prime ideal. Then the following are equivalent:
 \begin{enumerate}[label=(\roman{*}), ref=(\roman{*})]
 \item $(P)$ is real.
 \item\label{nullz} $(P) = \mathbf I (\BZ(P))$.
 \item\label{smallDim} $\dim_{\RR} (\BZ (P))=d-1$.
 \item \label{notVanishIdentically} $\nabla P$ does not vanish identically on $\BZ(P)$.
 \item\label{noSignChange} The sign of $P$ changes somewhere on $\RR^d$.
 \end{enumerate}
 \end{prop}
\begin{rem}
In  \cite{Bochnak}, Proposition \ref{propertiesOfPrincipleRealIdealProp} is stated in the more general language of real closed fields. However, $\RR$ is an example (indeed, the motivating example) of a real closed field, so the proposition applies to ideals in $\RR[x_1,\ldots,x_d]$.
\end{rem}

While not every polynomial $P\in\RR[x_1,\ldots,x_d]$ is a product of irreducible polynomials that generate real ideals, the following lemma shows that for our applications, we can always modify our polynomials to ensure that this is the case.
\begin{lem}\label{makeAPolyProdRealIdeals}
Let $P\in\RR[x_1,\ldots,x_d]$ be a real polynomial. Then there exists a non-zero real polynomial $\tilde P$ such that $\deg\tilde P\leq\deg P,$ $\BZ_{\RR}(P)\subset\BZ_{\RR}(\tilde P),$ and the irreducible components of $\tilde P$ generate real ideals.
\end{lem}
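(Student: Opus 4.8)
The plan is to reduce to the squarefree case, factor into irreducibles, and then replace each irreducible factor that fails to generate a real ideal by its real radical, which by Proposition \ref{propertiesOfPrincipleRealIdealProp} cuts the degree down. First I would write $P = c\, Q_1^{e_1}\cdots Q_r^{e_r}$ as a product of powers of distinct irreducible polynomials $Q_i\in\RR[x_1,\ldots,x_d]$, with $c\in\RR^\times$, using unique factorization in $\RR[x_1,\ldots,x_d]$. Since $\BZ(P) = \bigcup_i \BZ(Q_i) = \BZ(Q_1\cdots Q_r)$, it costs nothing to pass to the squarefree part $P_0 = Q_1\cdots Q_r$, which has $\deg P_0 \leq \deg P$ and the same zero set. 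So it suffices to handle a squarefree $P_0$.

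Now fix one irreducible factor $Q = Q_i$. The ideal $(Q)$ is prime (irreducible elements are prime in a UFD). By Proposition \ref{propertiesOfPrincipleRealIdealProp}, either $(Q)$ is real, or else condition \ref{noSignChange} fails, i.e.\ $Q$ does not change sign on $\RR^d$; without loss of generality $Q \geq 0$ everywhere. In that case I would invoke the real nullstellensatz in the principal case (a standard consequence of the results in \cite[\S 4.5]{Bochnak}): the real radical of $(Q)$ is again principal, generated by some irreducible $\tilde Q$ with $\BZ(\tilde Q) = \BZ(Q)$, the ideal $(\tilde Q)$ is real, and $\tilde Q \mid Q$, so $\deg \tilde Q \leq \deg Q$. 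Concretely, since $Q\geq 0$ and $Q$ is irreducible, either $Q$ already generates a real ideal (if $\dim\BZ(Q) = d-1$, contradicting our assumption via \ref{smallDim}) or $\BZ(Q)$ has dimension $\leq d-2$; writing $\mathbf{I}(\BZ(Q))$, which is a real radical ideal containing $Q$, one extracts a generator of its (height-one part, hence principal by the structure over a UFD) — more simply, replace $Q$ by the product of the distinct irreducible factors of all partial derivatives that still vanish on $\BZ(Q)$ together with whatever is needed; the cleanest route is just to cite the principal real nullstellensatz. Either way we obtain $\tilde Q$ with $\deg\tilde Q\leq\deg Q$, $\BZ(\tilde Q) = \BZ(Q)$, and $(\tilde Q)$ real.

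Finally I would set $\tilde P = \prod_i \tilde Q_i$, where $\tilde Q_i = Q_i$ for those $i$ with $(Q_i)$ already real and $\tilde Q_i$ is the replacement above otherwise. Then $\deg \tilde P \leq \sum_i \deg \tilde Q_i \leq \sum_i \deg Q_i = \deg P_0 \leq \deg P$; we have $\BZ(P) = \BZ(P_0) = \bigcup_i \BZ(Q_i) = \bigcup_i \BZ(\tilde Q_i) = \BZ(\tilde P)$, so in particular $\BZ(P)\subset\BZ(\tilde P)$; and each irreducible component of $\tilde P$ is one of the $\tilde Q_i$, which generates a real ideal by construction. The main obstacle is the middle step: making precise, with a citation to \cite{Bochnak}, that the real radical of a principal prime ideal in $\RR[x_1,\ldots,x_d]$ is again principal and generated by a divisor of the original generator — everything else is bookkeeping with unique factorization and zero sets.
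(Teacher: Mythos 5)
Your core step has a genuine gap: there is no ``principal real nullstellensatz'' of the form you invoke. The real radical of a principal prime ideal $(Q)\subset\RR[x_1,\ldots,x_d]$ need not be principal, and in fact almost never is when $(Q)$ fails to be real. Concretely, take $Q=x^2+y^2\in\RR[x,y]$: this is irreducible, $\BZ(Q)=\{(0,0)\}$, and $\mathbf{I}(\BZ(Q))=(x,y)$, which is not principal. Moreover your claim that $\tilde Q\mid Q$ cannot be salvaged: since $Q$ is irreducible, $\tilde Q\mid Q$ forces $\tilde Q$ to be a unit or an associate of $Q$, and neither works (a unit has empty zero set; an associate generates the same non-real ideal). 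The fallback you sketch --- extracting a ``height-one part'' of $\mathbf{I}(\BZ(Q))$ --- also fails, because when $\dim\BZ(Q)\leq d-2$ the ideal $\mathbf{I}(\BZ(Q))$ has no height-one associated primes at all. Notice that you have also quietly strengthened the requirement to $\BZ(\tilde Q)=\BZ(Q)$, which no principal ideal can deliver when $\BZ(Q)$ has codimension $\geq 2$; the lemma only asks for $\BZ(P)\subset\BZ(\tilde P)$, and this extra slack is essential.

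The paper exploits precisely that slack. For an irreducible factor $P_j$ with $(P_j)$ not real, Proposition \ref{propertiesOfPrincipleRealIdealProp} (item (iv) negated) gives that $\nabla P_j$ vanishes identically on $\BZ(P_j)$, so for generic $v$ the directional derivative $\nabla_v P_j$ is a nonzero polynomial of strictly smaller degree whose zero set still contains $\BZ(P_j)$. One then recurses on $\nabla_v P_j$ (re-factor, keep the factors generating real ideals, differentiate the others), and the process terminates because the degree strictly drops. Your passing suggestion to ``replace $Q$ by the product of the distinct irreducible factors of all partial derivatives that still vanish on $\BZ(Q)$'' gestures in this direction, but it is not developed into an argument with a degree bound or a termination guarantee; as written, your proof rests on the nonexistent principal real nullstellensatz and does not establish the lemma.
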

\begin{proof}
We will prove the statement by induction on $\deg P$. If $\deg P=1$, then $P$ is irreducible and $(P)$ is real. Now suppose the statement has been proved for all polynomials of degree $\leq D$. Let $P$ be a polynomial of degree $D+1$ and factor $P=P_1\cdots P_a.$ If all the irreducible factors of $P$ generate real ideals, we are done. If not, then without loss of generality we can assume that $(P_1)$ does not generate a real ideal. In particular, $\deg P_1\geq 2$. Let $v$ be a generic (with respect to $P$) real vector\footnote{Over $\RR$ one must be very careful with the phrase ``generic,'' but informally, a generic vector is any vector that does not lie in a certain bad set that has smaller dimension than the entire vector space. Often the bad set will not be defined explicitly, but will be determined from the list of properties we wish the generic vector to have. A precise definition of a generic real vector is given in Section \ref{genericPtsSection}}. Then $v\cdot\nabla P_1$ is a non-zero polynomial of degree $\deg(P_1)-1$, and by Proposition \ref{propertiesOfPrincipleRealIdealProp}\ref{notVanishIdentically}, $\BZ_{\RR}(P_1)\subset \BZ_{\RR}(v\cdot\nabla P_1)$. Let $P^\prime = (v\cdot\nabla P_1)P_2\cdots P_a$. Then $\deg P^\prime\leq D$ and $\BZ_{\RR}(P)\subset \BZ_{\RR}(P^\prime)$. We can now apply the induction hypothesis to $P^\prime$ to find a polynomial $\tilde P$ so that $\deg(P)\leq\deg(P^\prime)\leq\deg(\tilde P)$, and $\BZ_{\RR}(P)\subset \BZ_{\RR}(P^\prime)\subset\BZ_{\RR}(\tilde P)$.
\end{proof}

\begin{rem}\label{gradOfPolyRealIdeal}
Examining the above proof, we see that if $P\in\RR[x_1,\ldots,x_d]$ is a square-free polynomial whose irreducible components generate real ideals, and if $v\in\RR^d$ is a generic vector, then $\BZ_{\RR}(P)_{\sing}\subset\BZ_{\RR}(P)\cap\BZ_{\RR}(v\cdot\nabla P)$.

If $\deg P=1$, then $\BZ_{\RR}(P)$ is smooth, so this statement is not very interesting. If $\deg P\geq 2$, then $v\cdot\nabla P$ is not the zero polynomial, and $P$ and $v\cdot\nabla P$ have no common components (over $\RR$). Since $P$ and $v\cdot\nabla P$ are real polynomials, this also implies that $P$ and $v\cdot\nabla P$ have no common components over $\CC$. In particular, this means the variety $\BZ_{\CC}(P)\cap\BZ_{\CC}(v\cdot\nabla P)$ has codimension two. Complex algebraic varieties will be discussed further in Section \ref{realCplxVarietiesSec}.
\end{rem}

\subsubsection{Sign conditions}\label{signConditionsSection}
Several of the results we will cite refer to \emph{strict sign conditions} or \emph{realizations of realizable strict sign conditions}. While we will not use sign conditions in our proof directly, it is useful to understand how they relate to the objects we will be studying.

\begin{defn}Let $\mathcal Q\subset\RR[x_1,\ldots,x_d]$ be a collection of non-zero real polynomials. A \emph{strict sign condition} on $\mathcal Q$ is a map $\sigma\colon \mathcal Q\to\{\pm 1\}$. If $Q\in\mathcal Q$, we will denote the evaluation of $\sigma$ at $Q$  by $\sigma_Q$.

If $Z\subset\RR^d$ is a variety and $\sigma$ is a strict sign condition on $\mathcal Q$, then we can define the \emph{realization of $\sigma$ on Z} by
\begin{equation}\label{aSignConditionOnZ}
\reali(\sigma,\mathcal Q,Z)=\{x\in Z\colon Q(x)\sigma_Q>0\ \textrm{for all}\ Q\in\mathcal Q\}.
\end{equation}
We define
\begin{equation}\label{setOfRealizableSignConditions}
\Sigma_{\mathcal Q,Z}=\{\sigma\colon \reali(\sigma,\mathcal Q,Z)\neq\emptyset\},
\end{equation}
and
\begin{equation}\label{RealizationsOfSignCondition}
\reali(\mathcal Q,Z)=\{\reali(\sigma,\mathcal Q,Z) \colon \sigma\in\Sigma_{\mathcal Q,Z}\}.
\end{equation}
We call $\reali(\mathcal Q,Z)$ the collection of \emph{realizations of realizable strict sign conditions of $\mathcal Q$ on $Z$.} Note that if some $Q\in\mathcal Q$ vanishes identically on $Z$ then $\Sigma_{\mathcal Q,Z}=\emptyset$ and thus $\reali(\mathcal Q,Z)=\emptyset$.
\end{defn}

The key observation is that if $\mathcal{Q}$ is a collection of non-zero real polynomials, $Q_0=\prod_{Q\in \mathcal{Q}}Q$, and if $Z\subset\RR^d$ is a variety, then every connected component of $Z\backslash\BZ_{\RR}(Q)$ is contained in some set from $\reali(\mathcal Q,Z)$.

\begin{rem}\label{signConditionRemark}
The above observation has two implications. First, the number of connected components of $Z\backslash\BZ_{\RR}(Q)$ bounds the number of sets in $\reali(\mathcal Q,Z)$. Second, suppose that $\pts\subset Z$ is a collection of points, and at most $C$ points from $\pts$ lie in any set from $\reali(\mathcal Q,Z)$. Then at most $C$ points lie in any connected component of $Z\backslash\BZ_{\RR}(Q)$.
\end{rem}

\subsection{The topology of real varieties: Milnor-Thom type theorems}
In the proof below, we will find a polynomial whose zero-set partitions Euclidean space into open cells, and we will apply a rudimentary incidence bound to bound the number of incidences inside each cell. To apply this rudimentary bound, we will need to control how many surfaces from $\sphrs$ enter each cell. The theorems in this section will give us the tools to do this.

\begin{thm}[Barone-Basu {\cite[Theorem 5]{Barone2}}, special case ]
Let $Q_1,\ldots,Q_\ell\in\RR[x_1,\ldots,x_d]$. Let $D_i = \deg(Q_i)$. For $i=1,\ldots,\ell$, let $\mathcal{Q}_i=\{Q_1,\ldots,Q_i\}$, and let $V_i=\bigcap_{j=1}^i \BZ_{\RR}(Q_j)$. Suppose that $\dim_{\RR}(V_i)\leq e_i$ for each index $i$ (by convention, $V_0=\RR^d,$ and $e_0 = d$). Let $P\in\RR[x_1,\ldots,x_d]$, and let $D=\deg P$.

Suppose that
\begin{equation}
2\leq D_1\leq D_2 \leq \frac{1}{d+1}D_3 \leq \frac{1}{(d+1)^2}D_4\leq\cdots\leq \frac{1}{(d+1)^{\ell-2}}D_\ell\leq D,
\end{equation}
and that $\ell\leq d$. Then the number of (Euclidean) connected components of the set
\begin{equation*}
\{x\in V_{\ell}:P(x)> 0\}
\end{equation*}
is bounded by

\begin{equation}\label{BaroneBasuEqnBd}
CD^{e_{\ell}}\prod_{j=1}^{\ell}D_j^{e_{j-1}-e_j},
\end{equation}
where the constant $C$ depends only on $d$.
\end{thm}
\begin{rem}
The bound \eqref{BaroneBasuEqnBd} is a special case of the bound from \cite[Theorem 5]{Barone2}. In \cite{Barone2},  \eqref{BaroneBasuEqnBd} appears as the inequality preceding Remark 1.13.

In \cite{Barone2}, Barone and Basu consider a family of polynomials, while in our formulation the family is just the singleton $\{P\}$. Furthermore, Barone and Basu bound the number of connected components of \emph{all} sign conditions of this family of polynomials on the variety $V_{\ell}$, while we are only interested in the sign condition $P>0$. Finally, Barone and Basu state their result for semialgebraically connected components over a real closed field. Since we are only interested in results over $\RR$, we can restrict our attention to Euclidean connected components.
\end{rem}
We will always be interested in the case $d=4$. We shall record three special cases that will be of particular interest to us
\begin{cor}\label{surfacesEntering4DCellCor}
Let $f,P\in \RR[x_1,\ldots,x_4]$. Suppose that

 \begin{enumerate}[label=(\roman{*}), ref=(\roman{*})]
 \item\label{dimZf2} $\dim_{\RR}(\BZ_{\RR}(f))=2$.
 \item\label{dimZfP1} $\dim_{\RR}(\BZ_{\RR}(f)\cap \BZ_{\RR}(P))=1$.
\end{enumerate}
Then
\begin{itemize}
 \item The number of connected components of $\{x\in \BZ_{\RR}(f)\colon P(x)>0\}$ is $O\big( (\deg P)^2\big)$.
 \item The number of connected components of $\BZ_{\RR}(f)\cap \BZ_{\RR}(P)$ is $O\big((\deg P)^2\big)$.
\end{itemize}
The implicit constants depend only on $\deg f$.
\end{cor}
\begin{cor}\label{surfacesEntering3DCellCor}
Let $f,P,Q\in \RR[x_1,\ldots,x_4]$. Suppose that $f$ and $P$ satisfy Properties \ref{dimZf2} and \ref{dimZfP1} from Corollary \ref{surfacesEntering4DCellCor}, and that $\deg P\leq C\deg Q$. Then
\begin{itemize}
 \item The number of connected components of
 \begin{equation*}
    \{x\in \BZ_{\RR}(f)\cap \BZ_{\RR}(P)\colon Q(x)>0\}
 \end{equation*}
is $O\big((\deg P)(\deg Q)\big).$
 \item The number of isolated points of
 \begin{equation*}
    \BZ_{\RR}(f)\cap \BZ_{\RR}(P)\cap \BZ_{\RR}(Q)
 \end{equation*}
is $O\big((\deg P)(\deg Q)\big)$.
\end{itemize}
Again, the implicit constant depends only on $\deg f$ and the constant $C$.
\end{cor}

\begin{rem}
Corollary \ref{surfacesEntering4DCellCor} (resp. Corollary \ref{surfacesEntering3DCellCor}) is only meaningful if the degree of $P$ (resp. $P$ and $Q$) is much larger than the degree of $f$. In practice, the degree of $f$ will be bounded by quantities that depend only on the constants $C_0$ and $k$ from the statement of Theorem \ref{surfacesInR4Thm}, while the degrees of $P$ and $Q$ will grow as the quantities $m$ and $n$ from the statement of Theorem \ref{surfacesInR4Thm} become larger.
\end{rem}

\subsection{Polynomial partitioning type theorems}\label{polySandwichSec}
In \cite{Guth}, Guth and Katz proved the following theorem.
\begin{thm}[Discrete polynomial partitioning theorem]\label{GuthKatzPartitioning}
Let $\pts$ be a set of $m$ points in $\RR^d$ and let $D\geq 1$ be an integer. Then there is a polynomial $P$ of degree at most $D$ with the following property: $\RR^d \backslash \BZ_{\RR}(P)$ is the union of $O(D^d)$ open connected sets (cells), and each cell contains $\leq m/D^d$ points of $\pts$.
\end{thm}

After applying Lemma \ref{makeAPolyProdRealIdeals}, we can ensure that the irreducible components of $P$ generate real ideals:
\begin{cor}\label{hamSandwichThmStronger}
Let $\pts$ be a set of $m$ points in $\RR^d$ and let $D\geq 1$ be an integer. Then there is a polynomial $P$ of degree at most $D$ with the following property: $\RR^d \backslash \BZ_{\RR}(P)$ is the union of $O(D^d)$ open connected sets (cells), and each cell contains $\leq m/D^d$ points of $\pts$. Furthermore, each irreducible component of $P$ generates a real ideal.
\end{cor}
\begin{example}\label{ex1}
Consider the following collection of 72 points:
\begin{equation}
\begin{split}
\pts = \bigcup_{j=1}^3 \{(\pm j ,\pm j ,\pm j,\pm j)\}\cup \bigcup_{j=1}^3 \{(0 ,\pm j ,\pm j,\pm j)\},
\end{split}
\end{equation}
and let $D=2$.
Then the degree--four polynomial
\begin{equation*}
P(x_1,x_2,x_3,x_4)= x_1x_2x_3x_4
\end{equation*}
cuts $\RR^4$ into 16 open cells $\Omega_1,\ldots,\Omega_{16},$ (the cells are unbounded, but this is fine) plus the set
\begin{equation*}
\BZ_{\RR}(P) = \bigcup_{i=1}^4\{x_i=0\}.
\end{equation*}
We can verify that the polynomials $x_1,\ldots,x_4$ generate real ideals, so $P$ is a product of irreducible polynomials, each of which generates a real ideal. We have $|\Omega_i\cap \pts|=3\leq |\pts|/D^4$ for each $i=1,\ldots,16$. Thus $P$ satisfies the requirements of Corollary \ref{hamSandwichThmStronger} (Corollary \ref{hamSandwichThmStronger} only specifies the degree of $P$ up to an implicit constant, so we cannot verify that the degree is correct). Finally, note that we have $|Z\cap\pts|=16.$
\end{example}

\begin{example}\label{ex2}
Let $\pts\subset\RR^4$ be a large collection of points that lie in general position on the 2--plane $\{x_1=x_2=0\}$, and let $D$ be much smaller than $|\pts|^{1/4}.$ Then we can verify that the polynomial $P(x_1,x_2,x_3,x_4)=x_1$ satisfies the requirements of Corollary \ref{hamSandwichThmStronger}; $\BZ_{\RR}(P)$ cuts $\RR^4$ into the two cells $\Omega_1=\{x_1>0\}$ and $\Omega_2=\{x_1<0\}$. We have $\Omega_1\cap\pts=\emptyset$ and $\Omega_2\cap\pts=\emptyset.$ This phenomenon is unavoidable: any polynomial $P$ satisfying the requirements of Corollary \ref{hamSandwichThmStronger} must contain a factor that vanishes on the 2--plane $\{x_1=x_2=0\}$ (provided the points of $\pts$ are in general position). Thus we must have $\pts\subset \BZ_{\RR}(P),$ so each of the cells of the decomposition $\RR\backslash\BZ_{\RR}(P)$ will be empty.

This example is interesting for the following reason. Let $(\pts_1,\mathcal{L}_1)$ be a collection of $m$ points and $n$ lines in $\RR^2$ that determine $\Theta(m^{2/3}n^{2/3}+m+n)$ incidences. Consider $(\pts_1,\mathcal{L}_1)$ as a collection of complex points and complex lines in $\CC^2$. Now, identify $\CC^2$ with $\RR^4$, and let $(\pts,\sphrs)$ be the corresponding collection of points and 2--planes in $\RR^4$. Then all of the points of $\pts$ will lie on a common 2--plane, so the situation will resemble this example.
\end{example}

Theorem \ref{GuthKatzPartitioning} will be used to obtain the first level decomposition of the point set $\pts$. However, as seen in the above examples, many points may lie on the boundary $\BZ_{\RR}(P),$ and we will need to bound the number of incidences between surfaces in $\sphrs$ and points on $\BZ_{\RR}(P)$. To do this, we shall perform a second discrete polynomial partitioning decomposition on the algebraic variety $\BZ_{\RR}(P)$.
\begin{thm}[Polynomial partitioning decomposition on a hypersurface]\label{variantHamSandwichThm}
Let $\pts$ be a collection of points in $\RR^d$ lying on the set $\BZ_{\RR}(P),$ where $P$ is an irreducible polynomial of degree $D$ that generates a real ideal. Let $E\geq cD$. Then there exists a polynomial $Q\in\RR[x_1,\ldots,x_d]$ with the following properties:
\begin{enumerate}[label=(\roman{*}), ref=(\roman{*})]
 \item\label{variantHamSandwichThmItm2} $\deg Q \leq C E$.
 \item\label{variantHamSandwichThmItm3} $Q$ does not vanish identically on $\BZ_{\RR}(P)$. In particular, $\dim_{\RR}(\BZ_{\RR}(P)\cap \BZ_{\RR}(Q))\leq d-2.$
 \item\label{variantHamSandwichThmItm4} The set $\BZ_{\RR}(P)\backslash \BZ_{\RR}(Q)$ is a union of $O(DE^{d-1})$ connected components (cells). Each cell contains at most $\frac{C|\pts|}{DE^{d-1}}$ points from $\pts$.
 \item\label{variantHamSandwichThmItm5} Each irreducible component of $Q$ generates a real ideal.
\end{enumerate}
The constant $C$ depends only on $c$ and the dimension $d$.
\end{thm}

Theorem \ref{variantHamSandwichThm} is proved in \cite[\S A.3]{Zahl}. The theorem is stated in terms of realizations of strict sign conditions (discussed in section \ref{signConditionsSection}) rather than cells. The version stated in \cite[\S A.3]{Zahl} bounds the number of points that can lie in any realization of a realizable strict sign condition on $\BZ_{\RR}(P)$. However, as noted in Remark \ref{signConditionRemark}, the version stated above follows immediately.

We can continue Examples \ref{ex1} and \ref{ex2}.
\begin{example1}
Let $\pts,$ $D$, $P,$ and $Z$ be as in Example \ref{ex1} above. Then $P_1(x_1,x_2,x_3,x_4)=x_1$ is the only irreducible component of $P$ whose zero-set contains points from $\pts$. 
Let $E=2$ and let $\mathcal Q=\{x_2,x_3,x_4\}.$ Then $\BZ_{\RR}(P_1)\backslash \BZ_{\RR}(Q)$ consists of the 8 octants of $\RR^3$, where we identify $\RR^3$ with the hyperplane $\{x_1=0\}$ in $\RR^4$. Each of these components contains 2 points from $\pts\cap \BZ_{\RR}(P_1)$, and
\begin{equation*}
\pts\cap \BZ_{\RR}(P_1)\cap \bigcup_{j=2}^4\{x_j=0\}=\emptyset,
\end{equation*}
i.e.~every point of $\pts$ either lies in some cell of $\RR^4\backslash \BZ_{\RR}(P_1)$ or some connected component of $\BZ_{\RR}(P_1)\backslash \BZ_{\RR}(Q)$.
\end{example1}

\begin{example2}
Let $\pts,$ $D$, and $P$ be as in Example \ref{ex2}, and let $E$ be much smaller than $|\pts|^{1/3}.$ Let $\mathcal Q = \{x_2\}$. Then $\BZ_{\RR}(P)\backslash \BZ_{\RR}(Q)$ consists of the sets $\{ x_1=0,x_2>0\}$ and $\{ x_1=0,x_2<0\}.$ Neither of these sets contain any points from $\pts\cap \BZ_{\RR}(P)$; indeed, $\pts\subset \{x_1=x_2=0\}=\BZ_{\RR}(P)\cap \BZ_{\RR}(Q)$. Thus $\mathcal Q$ satisfies the requirements of Theorem \ref{variantHamSandwichThm}, but none of the points of $\pts$ lie in any cell of $\RR^4\backslash \BZ_{\RR}(P)$ nor in any connected component of $\BZ_{\RR}(P)\backslash \BZ_{\RR}(Q)$. Sections \ref{forayAlgGeoSec}--\ref{boundingI6Sec} will be devoted to dealing with this type of situation.
\end{example2}
\section{Proof of Theorem \ref{surfacesInR4Thm} step 1: cell partitionings}
\subsection{Initial reductions}\label{initialRedSec}
Let $\pts,\sphrs,$ be as in the statement of Theorem \ref{surfacesInR4Thm}. First, it suffices to prove Theorem \ref{surfacesInR4Thm} in the special case where all the surfaces $S\in\sphrs$ are irreducible. If the surfaces are reducible, then each $S\in\sphrs$ can be written $S=S_1\cup S_2\cup\ldots\cup S_{C(S)}$, where $C(S)\leq C_0$ and each $S_i$ is a smooth irreducible two-dimensional surface. Since $S$ is smooth, the surfaces $\{S_1,\ldots,S_{C(S)}\}$ are disjoint.

Now, for each $i=1,\ldots,C_0$, let $\sphrs_i = \{S_i\colon S\in\sphrs\ \textrm{and}\ C(S)\leq i \}.$  Then $\sphrs_i$ is a $C_0$--good collection of pseudoflats, and $I_i=I\cap \mathcal{I}(\pts,\sphrs_i)$ is a good collection of incidences. We can then consider each collection $(\pts,\sphrs_i,I_i)$ in turn.

Henceforth, we shall assume that all surfaces in $\sphrs$ are irreducible. We will prove Theorem \ref{surfacesInR4Thm} by induction on $m+n$. In contrast to the proof of Solymosi and Tao in \cite{Solymosi}, the use of induction will not introduce an $\epsilon$ loss in the exponent. The induction is merely used to streamline the argument by controlling a few minor terms in one of the bounds in Section \ref{secondHamSection}. These terms can also be controlled through a lengthier argument that does not involve induction. An analogue of this lengthier argument appears around Equation (2.9) in \cite{Zahl}.

The base case where $m+n$ is small is obvious, provided we choose the constant $C_1$ from Theorem \ref{surfacesInR4Thm} to be larger than $mn$.

We will frequently make use of the following classical theorem of K\H{o}vari, S\'os, and Tur\'an from \cite{Turan}:
\begin{thm}\label{turanBoundLemma}
Let $s,t$ be fixed positive integers, and let $G$ be a bipartite graph with one vertex set of size $a$ and one vertex set of size $b$. Suppose that $G$ contains no induced subgraph isomorphic to $K_{s,t}$. Then $G$ has at most $O(ba^{1-1/s} + a)$ edges. Symmetrically, $G$ has at most $O(ab^{1-1/t} + b)$ edges. Here the implicit constants depend only on $s$ and $t$.
\end{thm}

From this theorem, we have that
\begin{align}
|\mathcal{I}(\pts,\sphrs)|&\lesssim mn^{1-1/k}+n,\label{R4TuranBound1}\\
|\mathcal{I}(\pts,\sphrs)|&\lesssim m^{1/2} n+m.\label{R4TuranBound2}
\end{align}

In particular, we can assume
\begin{equation}\label{r4mnNotTooDifferent}
\begin{split}
n &< c_1m^k,\\
m &< c_1n^2,
\end{split}
\end{equation}
where $c_1$ is a small constant that we are free to determine later; we can make $c_1$ smaller by making the constant $C_1$ from Theorem \ref{surfacesInR4Thm} larger. If \eqref{r4mnNotTooDifferent} failed, then Theorem \ref{surfacesInR4Thm} would follow immediately from \eqref{R4TuranBound1} or \eqref{R4TuranBound2}. The bounds \eqref{r4mnNotTooDifferent} imply that
\begin{equation}\label{mainTermDominates}
\begin{split}
n \leq c_2m^{\frac{k}{2k-1}}n^{\frac{2k-2}{2k-1}},\\
m \leq c_2m^{\frac{k}{2k-1}}n^{\frac{2k-2}{2k-1}},\\
\end{split}
\end{equation}
where $c_2$ can be made arbitrarily small by making the constant $c_1$ from \eqref{r4mnNotTooDifferent} sufficiently small. These inequalities will be useful for closing the induction.

\subsection{First polynomial partition}
Let
\begin{equation}\label{defnOfD}
D=m^{\frac{k}{4k-2}}n^{-\frac{1}{4k-2}}.
\end{equation}
By \eqref{r4mnNotTooDifferent}, $D$ satisfies the inequalities
\begin{equation}\label{CleqDleqn12}
C_2<D<c_3 m^{1/4},
\end{equation}
where we can make the constant $C_2$ arbitrarily large and $c_3$ arbitrarily small by making the constant $c_1$ in \eqref{r4mnNotTooDifferent} smaller.

Let $P$ be a polynomial of degree at most $D$ such that $\BZ_{\RR}(P)$ cuts $\RR^4$ into $O(D^4)$ cells $\{\Omega_i\}$, each containing $O(m/D^4)$ points, as given by Corollary \ref{hamSandwichThmStronger}. We can assume that $P$ is square-free and its irreducible components generate real ideals. Let $n_i$ be the number of surfaces in $\sphrs$ that meet the $i$--th cell.

\begin{lem}\label{ThomTypeBoundLem1}
\begin{equation}
\sum n_i \lesssim D^2 n,
\end{equation}
where the sum is taken over all cells in the decomposition.
\end{lem}
\begin{proof}
We will show that each surface in $\sphrs$ enters $O(D^2)$ cells. Let $S\in\sphrs,$ let $f_S$ be a polynomial so that $S=\BZ_{\RR}(f_S)$, and let $P$ be the partitioning polynomial described above. We can assume that $\dim_{\RR}(S\cap \BZ_{\RR}(P^2))\leq 1$, since otherwise $S$ enters no cells. The polynomials $f_S$ and $P^2$ satisfy the requirements of Corollary \ref{surfacesEntering4DCellCor}, so the number of connected components of $S\cap\{P^2>0\}$ is $O(D^2)$. Thus $S$ enters $O(D^2)$ connected components of $\RR^4\backslash\BZ_{\RR}(P)$, i.e.~$S$ enters $O(D^2)$ cells.
\end{proof}
Applying the Theorem \ref{turanBoundLemma} inside each cell, we obtain
\begin{equation}\label{incidencesInsideFirstPartition}
\begin{split}
|I\cap\mathcal{I}(\pts\backslash \BZ_{\RR}(P),\sphrs)|
&\lesssim \sum_i|\pts\cap\Omega_i|n_i^{1-1/k}+\sum_i n_i\\
&\lesssim \sum_i \frac{m}{D^4} n_i^{1-1/k}+D^2n\\
&\lesssim \frac{m}{D^4}\Big(\sum_i 1\Big)^{1/k}\Big(\sum_i n_i\Big)^{1-1/k}+D^2n\\
&\lesssim \frac{m}{D^4} D^{4/k}\Big(D^2n\Big)^{1-1/k}+D^2n\\
&\lesssim m^{\frac{k}{2k-1}}n^{\frac{2k-2}{2k-1}}.
\end{split}
\end{equation}
Here we used H\"older's inequality on the third line, Lemma \ref{ThomTypeBoundLem1} on the fourth line (plus the fact that there are $O(D^4)$ cells), and the definition of $D$ from \eqref{defnOfD} on the final line.

Recall from Definition \ref{defnOflesssim} that the implicit constants above depend only on $C_0$ and $k$ from the statement of Theorem \ref{surfacesInR4Thm}. Thus, if we select $C_1$ in the statement of Theorem \ref{surfacesInR4Thm} sufficiently large (depending on $C_0$ and $k$), we have
\begin{equation}\label{boundInsideFirstLevelCells}
|I\cap\mathcal{I}(\pts\backslash \BZ_{\RR}(P),\sphrs)| \leq \frac{C_1}{100}(m^{\frac{k}{2k-1}}n^{\frac{2k-2}{2k-1}}+m+n).
\end{equation}

\subsection{Boundary incidences of the first partition}
Write
\begin{equation}\label{decompOfSphrs}
\sphrs=\sphrs_1\sqcup\sphrs_2,
\end{equation}
where $\sphrs_1$ is the set of surfaces that are contained in $\BZ_{\RR}(P)$, and $\sphrs_2$ is the set of surfaces that properly intersect $\BZ_{\RR}(P)$ (Since each surface is irreducible, the latter type of intersection must have dimension at most 1).
\begin{lem}\label{controlOfSmoothPtsLem}
\begin{equation}\label{ptsCapZSmoothSphrs1}
|I\cap\mathcal{I}(\pts\cap \BZ_{\RR}(P)_{\smooth},\sphrs_1)|\leq  m.
\end{equation}
\end{lem}
\begin{proof}
Let $p\in\pts\cap \BZ_{\RR}(P)_{\smooth},$ and let $H = T_p(\BZ_{\RR}(P)).$ Suppose there exist surfaces $S_1,S_2\in\sphrs_1$ with $(p,S_1),\ (p,S_2)\in I$. By Property \ref{smoothSurfacesSmooth} from Definition \ref{goodSurfacesDefn}, $p$ is a smooth point of $S$ and of $S^{\prime}$. Since $S\subset \BZ_{\RR}(P)$, we have $T_p(S)\subset T_p(\BZ_{\RR}(P))=\Pi$. Similarly, $T_p(S^\prime)\subset \Pi$. On the other hand, from the definition of a good collection of incidences (Definition \ref{goodCollectionIncidencesDefn}), we have that $T_p(S)\cap T_p(S^\prime)=p.$ Thus we have two affine 2--planes, $T_p(S)$ and $T_p(S^\prime)$ which meet only at the point $p$, but both are contained in the affine 3--plane $\Pi$. This cannot occur. Thus for each point $p\in\pts\cap \BZ_{\RR}(P)_{\smooth},$ there exists at most one surface $S\in\sphrs_1$ with $(p,S)\in I(\pts\cap \BZ_{\RR}(P)_{\smooth},\sphrs_1)$.
\end{proof}
It remains to consider incidences between surfaces and points lying on $\BZ_{\RR}(P)_{\sing}$. By \eqref{CleqDleqn12}, we can assume $\deg P\geq 2$. Let $v$ be a generic (with respect to $P$)  vector and let $R=v\cdot\nabla P$. By Remark \ref{gradOfPolyRealIdeal}, $R$ is not the zero polynomial, $\BZ_{\RR}(P)_{\sing}\subset \BZ_{\RR}(P)\cap \BZ_{\RR}(R)$, and $\dim_{\CC}(\BZ_{\CC}(P)\cap \BZ_{\CC}(R))=2$.

Let $\sphrs_1^\prime \subset\sphrs_1$ be those surfaces contained in $\BZ_{\RR}(P)_{\sing}.$ If $S\in\sphrs_1^\prime$ and $S^*$ is the complexification of $S$ (the smallest complex variety in $\CC^4$ that contains $S$), then $S^*\subset \BZ_{\CC}(P)\cap \BZ_{\CC}(R).$ Since $\dim_{\CC}(\BZ_{\CC}(P)\cap \BZ_{\CC}(R))=2,$ we must have that $S^*$ is a union of irreducible components of $\BZ_{\CC}(P)\cap \BZ_{\CC}(R)$. Furthermore, if $S_1,\ldots,S_\ell\in \sphrs_1^\prime$, then $S_1^*\cup\ldots\cup S_\ell^*$ must contain at least $\ell$ irreducible components. But by B\'ezout's theorem (discussed further in Section \ref{degreeBezSec} below, or \cite[chapter 18]{harris}), $\BZ_{\CC}(P)\cap \BZ_{\CC}(R)$ can contain at most $(\deg P)(\deg R)\lesssim D^2$ irreducible components. We conclude that
\begin{equation}
|\sphrs_1^\prime|\lesssim D^2.
\end{equation}

Applying Theorem \ref{turanBoundLemma} and \eqref{CleqDleqn12}, we have
\begin{equation*}
\begin{split}
|I\cap\mathcal{I}(\pts\cap \BZ_{\RR}(P)_{\sing},\sphrs_1^\prime)|& \lesssim D^2 m^{1/2} + m\\
&\lesssim m.
\end{split}
\end{equation*}
Thus if we choose the constant $C_1$ sufficiently large depending on $C_0$ and $k$ from the statement of Theorem \ref{surfacesInR4Thm}, we have
\begin{equation}\label{ptsCapZSingSphrs1Prime}
 |I\cap\mathcal{I}(\pts\cap \BZ_{\RR}(P)_{\sing},\sphrs_1^\prime)|\leq\frac{C_1}{100}\Big(m^{\frac{k}{2k-1}}n^{\frac{2k-2}{2k-1}}+m+n\Big).
\end{equation}

Let $\sphrs_2^\prime\subset\sphrs_1$ be those surfaces (contained in $\BZ_{\RR}(P)$) that are not contained in $\BZ_{\RR}(R)$. We must now bound $|I\cap\mathcal{I}(\pts\cap \BZ_{\RR}(P),\sphrs_2)|$ and $|I\cap\mathcal{I}(\pts\cap \BZ_{\RR}(R),\sphrs_2^\prime)|$. By Lemma \ref{makeAPolyProdRealIdeals}, we can assume that the irreducible components of $R$ generate real ideals. But note that $\BZ_{\RR}(P)$ and $\BZ_{\RR}(R)$ are both the zero-set of polynomials of degree $O(D)$, and thus the two collections of incidences can be dealt with in the same fashion. In the arguments below, we will prove that
\begin{equation}\label{IZS2}
 |I\cap\mathcal{I}(\pts\cap \BZ_{\RR}(P),\sphrs_2)|\leq\frac{C_1}{10}\Big(m^{\frac{k}{2k-1}}n^{\frac{2k-2}{2k-1}}+m+n\Big).
\end{equation}
An identical argument shows that
\begin{equation}\label{IZRS2P}
 |I\cap\mathcal{I}(\pts\cap \BZ_{\RR}(R),\sphrs_2^\prime)|\leq\frac{C_1}{10}\Big(m^{\frac{k}{2k-1}}n^{\frac{2k-2}{2k-1}}+m+n\Big).
\end{equation}
Once we have established these inequalities, we can combine the bounds \eqref{boundInsideFirstLevelCells}, \eqref{ptsCapZSmoothSphrs1}, \eqref{ptsCapZSingSphrs1Prime}, \eqref{IZS2}, and \eqref{IZRS2P} to close the induction.

\subsection{Second polynomial partitioning decomposition}\label{secondHamSection}
We shall now establish inequality \eqref{IZS2}. Factor $P$ into its irreducible components, $P=P_1\cdots P_\ell,$ and let $D_i=\deg(P_i)$. Let
\begin{equation*}
\pts_i =\big(\pts\cap\BZ_{\RR}(P_i)\big)\backslash\bigcup_{j<i}\pts_j,
\end{equation*}
so $\pts_1,\ldots,\pts_\ell$ are disjoint and $\bigcup\pts_i = \pts\cap\BZ_{\RR}(P)$.

Let
\begin{equation}\label{ptsCuttof}
\begin{split}
\mathcal A_0&=\{i\colon |\pts_i|^{k}\leq  c_4nD_i^{4k-2}\},\\
\mathcal A_1&=\{1,\ldots,\ell\}\backslash \mathcal A_0.
\end{split}
\end{equation}
The (small) constant $c_4$ will be chosen later.
\subsubsection{Incidences on varieties in $\mathcal A_0$}
We have
\begin{equation}
\begin{split}
\big|\bigcup_{j\in \mathcal{A}_0}\pts_j\big| &\leq c_4^{1/k} \sum_{j\in A_0}n^{1/k}D_j^{\frac{4k-2}{k}}\\
&\leq c_4^{1/k} n^{1/k}D^{\frac{4k-2}{k}}\\
&\leq c_4^{1/k}m,
\end{split}
\end{equation}
We will select $c_4$ so that $c_4 <\!\!< 1$. By the induction hypothesis (discussed in Section \ref{initialRedSec}), we conclude that
\begin{equation}\label{A0IncBd}
\Big|I\cap\mathcal{I}\Big( \bigcup_{j\in A_0}\pts_j, \sphrs\Big)\Big| \leq C_1 \Big(c_4^{\frac{1}{2k-1}}m^{\frac{k}{2k-1}}n^{\frac{2k-2}{2k-1}} + C_3c_4^{1/k}m + n\Big).
\end{equation}
Select the constant $c_1$ from \eqref{r4mnNotTooDifferent} sufficiently small so that
\begin{equation*}
n\leq \frac{1}{200}m^{\frac{k}{2k-1}}n^{\frac{2k-2}{2k-1}},
\end{equation*}
and select the constant $c_4$ from \eqref{ptsCuttof} sufficiently small. Then from \eqref{mainTermDominates} we obtain
\begin{equation}\label{contribFromInduction}
\Big|I\cap\mathcal{I}\Big( \bigcup_{j\in A_0}\pts_j, \sphrs\Big)\Big| \leq \frac{C_1}{100}\Big(m^{\frac{k}{2k-1}}n^{\frac{2k-2}{2k-1}}+m+n\Big).
\end{equation}

\subsubsection{Incidences on varieties in $\mathcal A_1$}
For each $i\in \mathcal A_1$, define
\begin{equation}\label{defnEj}
E_i = |\pts_i|^{\frac{k}{3k-2}}n^{-\frac{1}{3k-2}}D_i^{-\frac{k}{3k-2}}.
\end{equation}
Note that with this choice of $E_i$, we have
\begin{equation}\label{E_iNotTooSmall}
E_i \geq c_4^{\frac{1}{3k-2}} D_i,
\end{equation}
where $c_4$ is the constant from \eqref{ptsCuttof}. 

By \eqref{defnOfD}, \eqref{defnEj}, and H\"older's inequality,
\begin{equation}\label{sumDjEj}
n\sum_{j\in \mathcal A_1} D_jE_j \leq m^{\frac{k}{2k-1}}n^{\frac{2k-2}{2k-1}}.
\end{equation}
This fact will be used frequently.

Apply Theorem \ref{variantHamSandwichThm} to the surface $\BZ_{\RR}(P_i)$ and the point set $\pts_i$ with the parameter $E_i$ (here we make use of \eqref{E_iNotTooSmall}), and let $Q_i$ be the resulting polynomial. $Q_i$ has degree $O(E_i)$, where the implicit constant depends only on $c_4$, and the set $\BZ_{\RR}(P_i)\backslash\BZ_{\RR}(Q_i)$ is a union of $O(D_iE_i^3)$ cells; each cell contains $O\big( |\pts_i|/(D_iE_i^3)\big)$ points from $\pts_i$. Again, the implicit constant depends only on $c_4$ from \eqref{ptsCuttof}, which in turn ultimately depends only on $C_0$ and $k$ from the statement of Theorem \ref{surfacesInR4Thm}.

Let $n_{i,j}$ be the number of surfaces in $\sphrs_2$ that meet the $j$--th cell from $\BZ_{\RR}(P_i)\backslash\BZ_{\RR}(Q_i)$.

\begin{lem}\label{corOfThomTypeBoundLem2}
For each index $i$, we have
\begin{equation}
\sum_j n_{i,j}\lesssim nD_iE_i,
\end{equation}
where the sum is taken over all cells in $\BZ_{\RR}(P_i)\backslash\BZ_{\RR}(Q_i)$.
\end{lem}
\begin{proof}
We will prove that each surface in $\sphrs_2$ enters $O(D_iE_i)$ cells. Let $S\in\sphrs,$ let $f_S$ be a polynomial so that $S=\BZ_{\RR}(f_S)$. Since $S\in\sphrs_2$, we have that $\dim_{\RR}(S\cap \BZ_{\RR}(P_i))\leq 1$. Thus we can apply Corollary \ref{surfacesEntering3DCellCor} to conclude that the number of connected components of $S\cap \BZ_{\RR}(P_i)\cap\{Q_i^2>0\}$ is $O(D_iE_i)$. This implies that $S$ enters $O(D_iE_i)$ cells, where the implicit constants depend only on $C_0$ from the statement of Theorem \ref{surfacesInR4Thm}.
\end{proof}

We shall now bound the number of incidences that occur in the cells $\Omega_{i,j}$. Recall that at the moment, $i$ is fixed. We have

\begin{equation}
\begin{split}
|I\cap\mathcal{I}(\pts_i\backslash \BZ_{\RR}(Q_i), \sphrs_2)|&\lesssim  \sum_{j}|\pts_i\cap\Omega_{i,j}|n_{i,j}^{1-1/k}+\sum_j n_{i,j}\\
&\lesssim \Big(\sum_j \Big(\frac{|\pts_i|}{D_iE_i^3}\Big)^k\Big)^{1/k}\Big(\sum_j n_{i,j}\Big)^{1-1/k}+D_iE_in\\
&\lesssim \Big(D_iE_i^3 |\pts_i|^k D_i^{-k}E_i^{-3k}\Big)^{1/k}\Big(D_iE_i n\Big)^{1-1/k}+D_iE_in\\
&\lesssim\frac{|\pts_i|n^{1-1/k}}{E_i^{2-2/k}} + D_iE_in.
\end{split}
\end{equation}

Summing over all indices $i\in\mathcal A_1$ and using \eqref{defnOfD}, \eqref{defnEj}, and H\"older's inequality, we obtain
\begin{equation}\label{secondLevelInsideCells}
\begin{split}
\sum_{i\in \mathcal A_1}&|I\cap\mathcal{I}(\pts_i\backslash \BZ_{\RR}(Q_i), \sphrs_2)| \\
&\lesssim \sum_{i\in\mathcal A_1} \frac{|\pts_i|n^{1-1/k}}{E_i^{2-2/k}} + \sum_{i\in\mathcal A_1}D_iE_in\\
&\lesssim \sum_{i\in \mathcal A_1} n^{\frac{3k-3}{3k-2}}|\pts_i|^{\frac{k}{3k-2}}D_i^{\frac{2k-2}{3k-2}} + m^{\frac{k}{2k-1}}n^{\frac{2k-2}{2k-1}}\\
&\lesssim n^{\frac{3k-3}{3k-2}}m^{\frac{k}{3k-2}}D^{\frac{2k-2}{3k-2}} + m^{\frac{k}{2k-1}}n^{\frac{2k-2}{2k-1}}\\
&\lesssim m^{\frac{k}{2k-1}}n^{\frac{2k-2}{2k-1}}.
\end{split}
\end{equation}

Combining \eqref{contribFromInduction} and \eqref{secondLevelInsideCells}, and selecting $C_1$ sufficiently large depending on $C_0$ and $k$ from the statement of Theorem \ref{surfacesInR4Thm}, we obtain
\begin{equation}\label{intermediateIncidenceBd}
\begin{split}
 |I\cap \incidences(\pts\cap Z,\sphrs_2)| \leq \frac{C_1}{50} &\Big(m^{\frac{k}{2k-1}}n^{\frac{2k-2}{2k-1}}+m+n\Big) \\
 &+\sum_{i\in\mathcal A_1} \big|I\cap\mathcal{I}(\pts_i \cap \BZ_{\RR}(Q_i),\sphrs_2)\big|.
\end{split}
 \end{equation}

It remains to bound the second term in \eqref{intermediateIncidenceBd}.
%
%
%
%
%
%
%
%
%
\section{A foray into algebraic geometry}\label{forayAlgGeoSec}
\subsection{(Some more) real algebraic geometry}
\subsubsection{Semialgebraic sets}
A semialgebraic set is a finite union of sets of the form
\begin{equation*}
\bigcap_{j=1}^\ell \BZ_{\RR}(R_j) \cap \bigcap_{j=1}^{\ell^\prime} \{x\in\RR^d\colon R_j^\prime(x)>0\},
\end{equation*}
where $R_1,\ldots,R_{\ell}$ and $R^\prime_1,\ldots,R^\prime_{\ell^\prime}$ are real polynomials.

Later in our arguments we will need to consider (real) algebraic curves with finitely many points deleted. These objects are semialgebraic sets.

\subsubsection{Real and complex varieties}\label{realCplxVarietiesSec}
If $Z\subset\CC^d$ is a complex variety, let
\begin{equation*}
Z(\RR)=\{(x_1,\ldots,x_d)\in Z\colon \operatorname{Im}(x_i)=0\ \textrm{for each}\ i=1,\ldots,d\}.
\end{equation*}
Thus $Z(\RR)$ is the set of real points of $Z$. Generally, we will be interested in varieties $Z\subset\CC^d$ that can be defined by real polynomials. Conversely, if $Z\subset\RR^d$ is a real variety, let $Z^*$ be the smallest complex variety containing $Z$, i.e. $Z^*$ is the closure of $Z$ (after $Z$ has been embedded into $\CC^d$) in the Zariski topology on $\CC^d$. Observe that if $Z(\RR)$ is Zariski dense in $Z$, then $Z(\RR)^*=Z$; we will only use this observation in the special case where $Z$ is irreducible.

\subsubsection{The Zariski tangent space of a variety}\label{ZariskiTangentSpaceSec}
\begin{defn}
Let $Z\subset\CC^d$ be a variety. We define the \emph{Zariski tangent space} of $Z$ at the point $z$ to be
\begin{equation}
T_z(Z) = \{v\in \CC^d\colon \nabla f(v)=0\ \textrm{for all}\ f\in I(Z)\}.
\end{equation}
\end{defn}
If $I(Z)=(f_1,\ldots,f_\ell)$, then we can replace the condition ``$f(v)=0$ for all $f\in I(Z)$'' with the equivalent condition ``$f_1(v)=0,\ldots,f_\ell(z)=0$.''

If $\dim(T_z(Z))=\dim(Z)$, we say that $z$ is a smooth point of $Z$. Otherwise, it is a singular point. $z\in Z$ is a smooth point if and only if $Z$ is a $\dim(Z)$--dimensional complex manifold in a (Euclidean) neighborhood of $z$ see \cite[Chapter 1]{mumford} for further details.

\subsubsection{Points where real and complex dimension don't agree}
We will be interested in a variant of the following question. Let $Z\subset\CC^d$ be a variety that can be written as an intersection of $d-\dim_{\CC}(Z)$ hypersurfaces, each defined by a real polynomial, and let $z\in Z(\RR)$. Suppose that $\dim_{z,\RR}(Z(\RR))<\dim_{\CC} Z.$ Must $z$ be a singular point of $Z$? In this section, we will show that at least in some special cases, the answer is yes. The main tool will be a similar result about curves, which is proved in \cite[Section 6]{ElKahoui}.

\begin{lem}\label{isolatedPtOfCurveLem}
Let $\zeta \subset\CC^3$ be a space curve (a one-dimensional complex variety). Suppose that $\zeta=\BZ_{\CC}(P_1)\cap\BZ_{\CC}(P_2)$, where $P_1,P_2$ are real polynomials. Let $O\in O(3;\RR)$ be a generic (with respect to $P_1$ and $P_2$) rotation (see Section \ref{genericPtsSection} below for the definition of a generic rotation.) and let $\zeta^\prime = O(\zeta)$. Let $\pi\colon\CC^3\to\CC^2$ be the projection in the $x_3$--direction. If $z\in \zeta^\prime(\RR)$ is an isolated point, then $\pi(z)$ is an isolated point of $(\pi(\zeta^\prime))(\RR)$.
\end{lem}
\begin{proof}
The main tool we will use is Lemma 6.2 from \cite{ElKahoui}. Let $\zeta\subset\CC^3$ be a space curve. We say that $\zeta$ is in \emph{generic position} with respect to the projection to the $(x_1,x_2)$--plane if it satisfies the conditions from \cite[Definition 4.1]{ElKahoui}. Rather than state the definition of generic position here (it is quite technical), we will only state the properties we need.

First, by \cite[Section 5.4]{ElKahoui}, any curve $\gamma\subset\CC^3$ may be put in generic position after applying a generic orthogonal transformation\footnote{\cite[Section 5.4]{ElKahoui} actually considers a generic affine transformation rather than a generic orthogonal transformation, but the same argument applies.} $O\in O(3;\RR)$. Informally, a curve is in generic position if no coincidences happen when the curve $\zeta$ is projected onto the $x_1,x_2,$ or $x_3$ axes (for example, it would be bad if two distinct singular points of $\zeta$ projected to the same point).

In \cite{ElKahoui}, El Kahoui also defines what he calls an \emph{event point} for the (real) curve $\zeta(\RR)$. This includes objects such a critical points of $\zeta$, etc. Again, we do not need a precise definition; the only property we will use is that the set of event points is finite, and thus they will not be relevant to our argument. 

Let $\zeta\subset\CC^3$ be a space curve in general position that is defined by real polynomials, and let $\pi\colon\CC^3\to\CC^2$ be the projection onto the $(x_1,x_2)$--plane. Define $\alpha_\zeta = (\pi(\zeta))(\RR)$ (while in general the projection of a space curve to the plane need not be a plane curve, after applying a generic orthogonal transformation we can ensure that this is the case).

Lemma 6.2(i) from \cite{ElKahoui} relates the properties of $\zeta(\RR)$ and $\alpha_\zeta.$ In the terminology used here, \cite[Lemma 6.2(i)]{ElKahoui} says the following: if $I\subset \RR$ is an interval that does not contain the $x$--coordinate of any event point, and if $\beta\subset\alpha_\zeta$ is a simple open smooth real curve (in this case not an algebraic curve, but a smooth subset of an algebraic curve that is homeomorphic to $(0,1)$ ), then there is a simple open smooth real curve $\beta^\prime\subset\zeta(\RR)$ whose projection to the $(x_1,x_2)$--plane is $\beta$.

We can now prove Lemma \ref{isolatedPtOfCurveLem}. Let $z\in \zeta^\prime(\RR)$ be an isolated point. Suppose that $x=\pi(z)$ is not isolated. Since $O$ was a generic rotation, we can assume that $\pi^{-1}(x)=\{z\}$. For any $\epsilon>0$, we can find a simple open smooth real curve $\beta\subset \pi(\zeta^\prime)(\RR)$ such that $\operatorname{dist}(x,\beta)<\epsilon$ and the projection of $\beta$ to the $x_1$--axis does not contain any event points. We can now apply lemma 6.2(i) from \cite{ElKahoui} to conclude that there is a curve $\beta^\prime\subset\zeta^\prime(\RR)$ whose projection to the $(x_1,x_2)$--plane is $\beta$.

This means that for every $\epsilon>0$ there is a curve $\beta^\prime\subset \zeta(\RR)$ whose projection is $\epsilon$--close to $\pi(z)$. Since $\zeta(\RR)$ is closed (in the Euclidean topology) and $z$ is an isolated point of $\zeta(\RR)$, we conclude that the pre-image $\pi^{-1}(x)$ contains at least two points. But we assumed that this was not the case. This contradiction establishes the lemma.
\end{proof}

\begin{cor}\label{realComplexDimDontAgreeCor}
Let $P_1,P_2\in\RR[x_1,\ldots,x_4]$, and let $Z=\BZ_{\CC}(P_1)\cap\BZ_{\CC}(P_2)$. Suppose that $\dim_{\CC}(Z)=2$. If $z\in Z(\RR)$ satisfies $\dim_{z,\RR}(Z(\RR))\leq 1$, then $z$ is a singular point of $Z$.
\end{cor}
\begin{proof}
Suppose $z$ is a smooth point of $Z$; we will obtain a contradiction. Let $H\subset\CC^4$ be a generic real 3--plane passing through $z$, i.e.~$H$ is the zero set of a linear polynomial in $\RR[x_1,\ldots,x_4]$. Then $H\cap Z$ is a complex one-dimensional variety (i.e.~a curve), $z$ is a smooth point of $H\cap Z$, and if we identify $H$ with $\CC^3$, we can write $H\cap Z = \BZ_{\CC}(P_1^\prime)\cap\BZ_{\CC}(P_2^\prime),$ where $P_1^\prime,P_2^\prime\in\RR[x_1,x_2,x_3]$. Furthermore, since $\dim_{z,\RR}(Z(\RR))\leq 1$, we have $\dim_{z,\RR}((H\cap Z)(\RR))=0,$ i.e.~$z$ is an isolated point of $(H\cap Z)(\RR)$. Thus we can apply Lemma \ref{isolatedPtOfCurveLem} to conclude that $z$ is a singular point of $H\cap Z$. This contradicts the assumption that $z$ was a smooth point of $H\cap Z$. We conclude that $z$ is a singular point of $Z$.
\end{proof}
\subsection{Generic points}\label{genericPtsSection}
Often in our arguments we will consider properties that hold at most places on an algebraic variety. In this section we will make the notion of ``most places'' precise. Specifically, we will introduce the notion of a generic point. We will begin with a motivating example. 
\begin{example}
Let $Z\subset\CC^4$ be an irreducible hypersurface and let $\pts,$ $\sphrs$ be the set of points and pseudoflats from Theorem \ref{surfacesInR4Thm}. Then a generic point of $Z$ does not lie in $\pts$, and does not intersect any any pseudoflat from $\sphrs$. 
\end{example}

\begin{defn}
Let $Z$ be an irreducible complex variety and let $\mathcal M$ be a finite collection of polynomials, none of which vanish on $Z$. We say that a point $z\in Z$ is \emph{generic} with respect to $\mathcal M$ if none of the polynomials in $\mathcal{M}$ vanish on $z$. In particular, for $Z$ and $\mathcal{M}$ fixed, the set of generic points is Zariski dense in $Z$. In practice, the collection $\mathcal{M}$ of polynomials will be aparent from context, so we will abuse notation and make statements such as ``a generic point of $Z$ has the following properties.'' Here the set of polynomials $\mathcal M$ should be inferred from the properties we have specified.

In general, the set of polynomials $\mathcal{M}$ will depend on the variety $Z$, the set of points and pseudoflats from Theorem \ref{surfacesInR4Thm} as well as any intermediate objects that have already been constructed, and whatever property is currently under consideration.

If $Z(\RR)$ is Zariski dense in $Z$, then we define a generic real point of $Z(\RR)$ to be a point $z\in Z(\RR)$ for which no polynomial in $\mathcal{M}$ vanishes. In particular, if $Z(\RR)$ is dense in $Z$, then the set of generic real points is non-empty.
\end{defn}

The set $\CC^4$ will be of particular interest, and we will consider it as both a vector space and a complex variety. In our arguments below, we will refer to generic vectors in $\CC^4$ or $\RR^4$. This means that the vector is generic with respect to all of the objects defined previously---this includes the points $\pts$, surfaces $\sphrs$, the partitioning polynomials $P$ and $\{Q_i\}$, and any previously defined vectors, etc.

We will also be interested in several other generic objects:
\begin{itemize}
 \item Generic $k$--planes. These are generic elements of the Grassmannian $\grass(k,d;\CC)$ or $\grass(k,d;\RR)$. They will be discussed further in Section \ref{GaussMapSection} below.
 \item Generic (real) rotations. These are generic elements of the orthogonal group $O(d;\RR)$ (this group has the structure of a real variety).
 \item Generic projections. These are projections of the form $O^{-1}\circ \pi\circ O,$ where $\pi\colon\CC^d\to\CC^{d^\prime}$ is the projection to the first $d^\prime$ coordinates, and $O$ is a generic rotation.
\end{itemize}
\subsection{Resultants and Projections}
Given two polynomials $f,g\in\CC[x_1,\ldots,$ $x_d]$, we define the resultant $\res(f,g)\in\CC[x_1,\ldots,x_{d-1}]$ to be the resultant of $f$ and $g$ in the $x_d$--variable, i.e.~we consider $f$ and $g$ to be polynomials in $x_d$ with coefficients in the ring $\CC[x_1,\ldots,x_{d-1}]$, and we take the (classical) resultant of these two polynomials. If $f$ and $g$ have real coefficients, then $\res(f,g)$ also has real coefficients.

If $f\in \CC[x_1,\ldots,x_{d}]$, we say that $f$ is $x_{d}$--\emph{monic} if the coefficient of $x_d^{\deg f}$ is non-zero. If $f$ is $x_d$--monic, $f$ and $g$ intersect properly (i.e. if $\dim_{\CC}(\BZ_{\CC}(f)\cap\BZ_{\CC}(g))=d-2$), and if $\pi_d\colon\CC^{d}\to\CC^{d-1}$ is the projection to the first $(d-1)$-coordinates, then
\begin{equation*}
\pi_d(\BZ_{\CC}(f)\cap\BZ_{\CC}(g))\subset \BZ_{\CC}(\res(f,g)).
\end{equation*}
See for example Section 2C from \cite{mumford}. In particular, if $f$ and $g$ have real coefficients, $f$ is $x_d$--monic, and if $\dim_{\CC}(\BZ_{\CC}(f)\cap\BZ_{\CC}(g))=d-2$, then
\begin{equation}\label{projectionDimensionResultant}
\pi_d(\BZ_{\RR}(f)\cap\BZ_{\RR}(g))\subset \BZ_{\RR}(\res(f,g)),
\end{equation}
and $\BZ_{\RR}(\res(f,g))$ is a variety of dimension at most $d-2$.

Note, however, that if we only require that $\dim_{\RR}(\BZ_{\RR}(f)\cap\BZ_{\RR}(g))=d-2$, then $\BZ_{\RR}(\res(f,g))$ may be all of $\RR^{d-1}$. For example, if $f=g$ and $\dim_{\RR}(\BZ_{\RR}(f))=d-2$, then $\dim_{\RR}(\BZ_{\RR}(f)\cap\BZ_{\RR}(g))=d-2$, but $\res(f,g)$ is the zero polynomial.

While not every polynomial $f\in\CC[x_1,\ldots,x_d]$ is $x_d$--monic, we can usually fix this problem by pre-composing $f$ with a generic orthogonal transformation. More precisely, if $f\in\CC[x_1,\ldots,x_d]$ and if $O\in O(d;\CC)$ is a generic rotation, then $f\circ O$ is $x_d$--monic. The same statement holds if $f\in\RR[x_1,\ldots,x_d]$ and $O$ is a generic real rotation.

\subsubsection{Projections and degree} If $Z\subset\CC^d$ is an irreducible variety of dimension $\leq d-2$, and $\pi\colon \CC^{d}\to\CC^{d-1}$ is a generic projection, then $\deg(\overline{\pi(Z)})=\deg(Z)$, where $\overline{\phantom{1}\cdot\phantom{1}}$ denotes closure in the Zariski topology. This follows from the definition of degree given in Section \ref{degreeBezSec} below.
\subsection{Singular points of transverse intersections}\label{multVarietyPtSec}
Let $f,g\in\CC[x_1,x_2,x_3]$ be square-free polynomials. If $z\in \BZ_{\CC}(f)\cap\BZ_{\CC}(g)$, we define the \emph{intersection multiplicity} of $\BZ_{\CC}(f)\cap\BZ_{\CC}(g)$ at $z$ to be the intersection multiplicity of the plane curves $\big(\BZ_{\CC}(f)\cap H\big) \cap \big(\BZ_{\CC}(g)\cap H\big)$ in $H$, where $H$ is a generic plane passing through $z$. The intersection multiplicity of plane curves in $\CC^2$ is a classical subject and has many equivalent definitions. See Section 5.1 of \cite{mumford} for further discussion. We will need the following properties of intersection multiplicity:

\begin{itemize}
\item If $z$ is a smooth point of $\BZ_{\CC}(f)\cap\BZ_{\CC}(g)$ and if $\binom{\nabla f(z)}{ \nabla g(z)}$ has rank 2, then the intersection multiplicity of $\BZ_{\CC}(f)\cap\BZ_{\CC}(g)$ at $z$ is 1; this is because $\BZ_{\CC}(f),\BZ_{\CC}(g),$ and $H$ form a transverse complete intersection at $z$.
\item $f$ and $g$ are square-free, $z\in\BZ_{\CC}(f)\cap\BZ_{\CC}(g),$ and if $z$ is a singular point of $\BZ_{\CC}(f)$, then the intersection multiplicity of $\BZ_{\CC}(f)\cap\BZ_{\CC}(g)$ at $z$ is strictly greater than 1.
\end{itemize}

\begin{lem}\label{singPtCor}
Let $Y,Z$ be two-dimensional varieties in $\CC^3$ and let $\zeta\subset Y\cap Z$ be an irreducible component. Suppose that $Y$ is smooth, and suppose that $Y$ and $Z$ intersect transversely on $\zeta$ (i.e. $Z$ is smooth at a generic point of $\gamma$, and $Y$ and $Z$ intersect transversely at a generic point of $\zeta$). Then if $z\in \zeta$ is a singular point of $Z$, $z$ must also be a singular point of $Y\cap Z$.
\end{lem}

\begin{proof}
First, if $z$ lies on more than one component of $Y\cap Z$, then $z$ is a singular point of $Y\cap Z$, so we are done. Thus we may assume that $z$ only lies on the component $\zeta$. Let $Z=\BZ_{\CC}(f),\ Y=\BZ_{\CC}(g)$ with $f$ and $g$ square-free. Since $Y$ and $Z$ intersect transversely along $\zeta$, each generic point $x\in\zeta\subset Y\cap Z$ has multiplicity 1. However, since $z$ is a singular point of $Z$, $\nabla f(z)=0$, so  $\binom{\nabla f(z)}{\nabla g(z)}$ has rank $\leq 1$. Thus $z$ is a singular point of $\zeta$.
\end{proof}

\subsection{Degree and B\'ezout's theorem}\label{degreeBezSec}
\begin{defn}\label{defnOfDegree}
Let $Z\subset\CC^d$ be a pure-dimensional variety (i.e.~all of its irreducible components have the same dimension). We define the degree of $Z$ to be $|Z\cap H|$, where $H$ is a generic linear space of dimension $d-\dim_{\CC}(Z)$. This definition is independent of the choice of (generic) hyperplane; see \cite[chapter 18]{harris} for further details.  In particular, if $Z=\BZ_{\CC}(f)$, then $\deg Z\leq\deg f$. If $Z\subset\CC^d$ is a hypersurface (a $(d-1)$--dimensional variety), we can write $Z=\BZ_{\CC}(f)$ for some $f\in\CC[x_1,\ldots,x_d]$ with $\deg f=\deg(Z)$.
\end{defn}
We will make frequent use of B\'ezout's theorem, which gives us quantitative control on the complexity of the intersection of two varieties. There are many variants of this theorem. We will state a version below that is sufficient for our needs. 

\begin{prop}[B\'ezout's theorem for properly intersecting varieties]\label{Bezout}
Let $Y,Z\subset\CC^d$ be pure-dimensional varieties, and suppose that
\begin{equation}\label{propIntersecting}
\dim_{\CC}(Y\cap Z)=\dim Y + \dim Z - d.
\end{equation}
Then
\begin{equation}\label{fultonBezoutBd}
\deg(Y\cap Z)\leq \deg(Y)\deg(Z).
\end{equation}
In particular, if \eqref{propIntersecting} holds and if $\dim Y + \dim Z=d$, then $Y\cap Z$ is a finite set, and it has cardinality at most $\deg(Y)\deg(Z)$.
\end{prop}
\begin{rem}
The proposition above is Example 12.3.1 from \cite{Fulton}, which is itself a special case of Theorem 12.3. In Example 12.3.1, the LHS of \eqref{fultonBezoutBd} is replaced by the sum of the degrees of the irreducible components of $Y\cap Z$. However, our definition of degree (Definition \ref{defnOfDegree}) allows for a variety to have several irreducible components, so \eqref{fultonBezoutBd} coincidences with the statement in \cite{Fulton}.
\end{rem}

We will also need a version of B\'ezout's theorem when the varieties do not intersect properly. For simplicity, we will only state a special case
\begin{prop}[B\'ezout's theorem for non-properly intersecting varieties; special case]\label{BezoutNonProper}
Let $Y,Z\subset\CC^d$ be pure-dimensional varieties. Then the number of isolated points of $Y\cap Z$ is at most $\deg(Y)\deg(Z)$.
\end{prop}
This is another special case of Example 12.3.1 from \cite{Fulton}. In \cite{Fulton}, Fulton defines a \emph{distinguished component} of the intersection $Y\cap Z$, and then proceeds to bound the number of distinguished components. Isolated points of $Y\cap Z$ are distinguished components of the intersection, so the bound applies here.

Finally, we will need a version of B\'ezout's theorem with multiplicities for plane curves. This is also a corollary of Example 12.3.1 from \cite{Fulton}. We will first introduce the notion of multiplicity of a plane curve at a point and multiplicity of an intersection of plane curves.
\begin{defn}\label{BezoutMultPlane}
Let $\zeta\subset\CC^2$ be an algebraic curve and let $z\in \CC^2$. We define the \emph{multiplicity of $\zeta$ at $z$}, $\mult_z(\zeta),$ to be the order of vanishing of $f$ at $z$, where $f$ is the unique (up to scalar multiples) square-free polynomial such that $\zeta=\BZ_{\CC}(f)$.
\end{defn}

\begin{defn}
Let $\zeta,\zeta^\prime\subset\CC^2$ be algebraic plane curves that have no common components, and let $z\in\zeta\cap\zeta^\prime.$ Then there is a number $m_z=\mult_z(\zeta\cap\zeta^\prime)$ with the following property. For all sufficiently small open Euclidean neighborhoods $U$ of $z$, $z$ is the unique point in $U\cap\zeta\cap\zeta^\prime$. For each such neighborhood $U$, there is a number $\epsilon>0$ so that if $v$ is a generic vector in $\CC^2$ with $|v|\leq \epsilon$, then $U\cap(\zeta+v)\cap\zeta^\prime$ is a union of $m_z$ points. In short, if we shift $\zeta$ by a small generic vector $v$, then the point $z\in\zeta\cap\zeta^\prime$ splits into $\mult_z(\zeta\cap\zeta^\prime)$ distinct points.
\end{defn}

\begin{prop}[B\'ezout's theorem with multiplicity for plane curves]\label{BezoutMultPlane}
Let $\zeta,\zeta^\prime$ be plane curves with no common components. Then
\begin{equation*}
\sum_{z\in\zeta\cap\zeta^\prime}\mult_z(\zeta\cap\zeta^\prime)\leq(\deg\zeta)(\deg\zeta^\prime).
\end{equation*}
\end{prop}
\subsection{Controlling the singular locus of a surface}
\begin{lem}\label{trappingSingLocusVariety}
Let $P,Q\in\CC[x_1,\ldots,x_4]$ be polynomials, and suppose that $\BZ_{\CC}(P)\cap\BZ_{\CC}(Q)$ is a complete intersection. Then there is a curve $\gamma$ of degree  $O\big((\deg P)^2( \deg Q)^2\big)$ so that $(\BZ_{\CC}(P)\cap\BZ_{\CC}(Q))_{\sing}\subset\gamma$.
\end{lem}
\begin{proof}
This is a special case of the general fact that if $Z\subset\CC^d$ is an irreducible variety, then there is a variety of degree $O((\deg Z)^2)$ and dimension $< \dim Z$ that contains $Z_{\sing}$. However, there do not appear to be any easy references to this fact in the literature, so we will briefly sketch the proof of Lemma \ref{trappingSingLocusVariety} here.

After a generic change of coordinates, we can assume that $P$ and $Q$ are $x_4$--monic, and thus $\pi(\BZ_{\CC}(P)\cap\BZ_{\CC}(Q))\subset\BZ_{\CC}(\res(P,Q))$. Recall from Section \ref{ZariskiTangentSpaceSec} that the singular points of $\BZ_{\CC}(P)\cap\BZ_{\CC}(Q)$ are precisely those points at which $\BZ_{\CC}(P)\cap\BZ_{\CC}(Q)$ fails to be a complex manifold. Thus $\pi((\BZ_{\CC}(P)\cap\BZ_{\CC}(Q))_{\sing})\subset(\BZ_{\CC}(\res(P,Q)))_{\sing}$. But $\BZ_{\CC}(\res(P,Q))$ is a surface in $\CC^3$ of degree at most $(\deg P)(\deg Q)$, and thus we can write $\BZ_{\CC}(P)\cap\BZ_{\CC}(Q)=\BZ_{\CC}(f)$, where $f\in\CC[x_1,x_2,x_3]$ is a square-free polynomial of degree at most $(\deg P)(\deg Q)$.

We can now find a polynomial $g\in\CC[x_1,x_2,x_3]$ so that $\BZ_{\CC}(f)\cap\BZ_{\CC}(g)$ is a complete intersection, and $\BZ_{\CC}(f)_{\sing}\subset\BZ_{\CC}(f)\cap\BZ_{\CC}(g)$. Briefly, we do this as follows. Let $v\in\CC^3$ be a generic vector, and let $g= v\cdot\nabla f$. Then $(\BZ_{\CC}(f))_{\sing}\subset\BZ_{\CC}(f)\cap\BZ_{\CC}(g)$. Furthermore, $\BZ_{\CC}(f)\cap\BZ_{\CC}(g)$ is a complete intersection; if this were not the case, then $g$ must vanish identically on some irreducible component of $f$. But since $v$ was chosen generically, this implies that $\nabla f$ vanishes identically on some irreducible component of $f$, and this contradicts the assumption that $f$ was square-free.

Let $g^\prime(x_1,x_2,x_3,x_4)=g(x_1,x_2,x_3)$ and let $\gamma= \BZ_{\CC}(g^\prime)\cap \BZ_{\CC}(P)\cap\BZ_{\CC}(Q)$. Then $(\BZ_{\CC}(P)\subset\BZ_{\CC}(Q))_{\sing}\subset\gamma$, and $\gamma$ is a curve of degree $O\big((\deg P)^2( \deg Q)^2\big)$.
\end{proof}
\subsection{Branches of algebraic curves}\label{branchesOfCurvesSection}
Frequently, we will need to bound the number of point-surface incidences $I\subset\mathcal{I}(\pts,\sphrs)$ when the points lie on a one-dimensional algebraic curve $\zeta$, and the surfaces meet that curve in a one-dimensional intersection (which need not be all of $\zeta$, since generally $\zeta$ will not be irreducible). The idea is that if a point $p\in\pts$ lies in $\zeta_{\smooth}$, then there can be at most one surface $S\in\sphrs$ that is incident to $p$ and for which $S\cap\zeta$ contains an irreducible component of $\zeta$ containing $p.$ However, if $p\in \zeta_{\sing}$, then potentially many surfaces $S\in\sphrs$ can have this property. We need to bound how many surfaces there can be. This is controlled by the number of branches of $\zeta$ at the point $p$. We recall \cite[Lemma 3.3]{milnor}:
\begin{lem}
Let $z$ be a non-isolated point of a real or complex one-dimensional variety $V$. Then a suitably chosen (Euclidean) neighborhood of $z$ in $V$ is the union of finitely many \emph{branches} which intersect only at $z$. Each branch is homeomorphic to a (Euclidean) open interval of real numbers (if $V$ is a real variety) or a (Euclidean) open disk of complex numbers (if $V$ is  a complex variety).
\end{lem}

\begin{defn}\label{defnOfGz}
For $z\in\zeta$, let $G_z(\zeta)$ be the number of branches of $\zeta$ through $z$. For example, if $z$ is a smooth point of $\zeta$, then $G_z(\zeta)=1$.
\end{defn}

\begin{lem}
Let $\zeta\subset\CC^d$ be an algebraic curve. Suppose $z\in \zeta(\RR)$ is a non-isolated point. Then the number of real branches of $\zeta(\RR)$ through $z$ is at most the number of complex branches of $\zeta$ through $z$.
\end{lem}
See e.g.~\cite[p29]{milnor}.

\begin{lem}\label{branchesOfACurveLem}
Let $\zeta\subset\CC^d$ be an algebraic curve. Then
\begin{equation}
 \sum_{z\in\zeta_{\sing}}G_z(\zeta)\leq(\deg\zeta)^2.
\end{equation}
\end{lem}
\begin{proof}
The main observation is that if $\pi\colon\CC^d\to\CC^2$ is a generic projection, then $G_{z}(\zeta)\leq G_{\pi(z)}(\overline{\pi(\zeta)})$. Thus it suffices to prove the result for plane curves. However, if $\zeta$ is a plane curve then $G_z(\zeta)\leq\mult_z(\zeta)$, where $\mult_z(\zeta)$ is given by Definition \ref{BezoutMultPlane}. We have that
\begin{equation*}
\sum_{z\in\zeta_{\sing}}\mult_z(\zeta)\leq (\deg\zeta)^2.
\end{equation*}
See i.e.~\cite[(7) on page 54]{SR} for a discussion of this formula. Equation (7) on page 54 of \cite{SR} defines the genus of an irreducible complex plane curve $\zeta$ to be
\begin{equation*}
\frac{1}{2}(\deg\zeta)(\deg\zeta-1)-\frac{1}{2}\sum \mult_z(\zeta)(\mult_z(\zeta)-1),
\end{equation*}
where the sum is taken over all multiple points of the curve. Since the genus is non-negative, this implies
\begin{equation*}
\sum \mult_z(\zeta)(\mult_z(\zeta)-1)\leq(\deg\zeta)(\deg\zeta-1),
\end{equation*}
so in particular
\begin{equation*}
\sum \mult_z(\zeta)\leq(\deg\zeta)^2.
\end{equation*}
It remains to extend this result to reducible curves. But this follows from B\'ezout's theorem for plane curves (Proposition \ref{BezoutMultPlane}). Factor $\zeta = \zeta_1\cup\ldots\cup\zeta_\ell$ into irreducible components. If $z\in\zeta$, then $\mult_z(\zeta)=\sum_{i=1}^\ell\mult_z(\zeta_i)$. We have
\begin{equation*}
\begin{split}
\sum_{z\in\zeta\colon\mult_z(\zeta)\geq 2}\mult_z(\zeta)&\leq \sum_{i< i^\prime}\sum_{z\in \zeta_i\cap\zeta_{i^\prime}}\mult_z(\zeta_i\cap\zeta_{i^\prime})+\sum_i\sum_{z\in\zeta_i\colon\mult_z(\zeta_i)\geq 2}\mult_z(\zeta_i)\\
&\leq \sum_{i< i^\prime}(\deg\zeta_i)(\deg\zeta_{i^\prime})+\sum_{i}(\deg\zeta_i)^2\\
&\leq(\deg\zeta)^2.\qedhere
\end{split}
\end{equation*}

\end{proof}

Later in our proof we will be given a collection of Euclidean connected components of real algebraic curves and a collection of bad points on these curves. We will need to remove these bad points to obtain a (possibly larger) collection of curves. The following observation bounds the number of additional connected components that are created in this process. In essence, it says that when you remove a point from an interval, you are left with two connected components. 

\begin{lem}[Real branches and connected components]\label{removePtIncreaseConnectedComponents}
Let $\zeta\subset \CC^d$ be an algebraic curve and let $\alpha\subset\zeta(\RR)$ be a semialgebraic set. Suppose that $z\in\alpha$ and $\dim_{\RR,z}(\alpha)=1$. Then
\begin{equation}
b_0(\alpha\backslash z) \leq b_0(\alpha) + 2G_z(\alpha)\leq b_0(\alpha) +2 G_z(\zeta),
\end{equation}
where $b_0(X)$ is the number of Euclidean connected components of the set $X$.
\end{lem}

\subsection{Incidences on algebraic curves}
The following lemma will be used frequently to bound the number of incidences occurring on various bad sets.
\begin{lem}[Incidences on a curve]\label{incidencesOnACurveLem}
Let $\zeta\subset\CC^4$ be an algebraic curve. Let $\pts\subset\RR^4$ be a collection of points, and suppose $\pts\subset \zeta(\RR)$. Let $\mathcal{S}$ be a $C_0$--good collection of pseudoflats (in $\RR^4$), and let $I\subset\mathcal{I}(\pts,\mathcal{S})$ be a good collection of incidences. Let
\begin{equation*}
\begin{split}
I^\prime=\{(p,S)\in I\colon &p\ \textrm{lies on a one-dimensional component}\\& \textrm{of}\ S^*\cap \zeta,\ \textrm{and}\ p\ \textrm{is a smooth point of this component}\}.
\end{split}
\end{equation*}
Then
\begin{equation}
|I^\prime|\leq |\pts| + (\deg\zeta)^2.
\end{equation}
\end{lem}
\begin{proof}
If $p\in\zeta$ is a smooth point, then $p$ can be incident to at most one surface in $\mathcal{S}$. Otherwise, there can be at most $G_\zeta(p)$ surfaces $S\in\mathcal{S}$ with $(p,S)\in I$. The result now follows from Lemma \ref{branchesOfACurveLem}.
\end{proof}
We are now ready to return to the task of bounding incidences on the surfaces $\BZ_{\RR}(P_i)\cap \BZ_{\RR}(Q_i)$.
\section{Proof of Theorem \ref{surfacesInR4Thm} step two: incidences on a surface in $\RR^4$}\label{proofOfCrossingNumberIneqLemmaSection}
Let
\begin{equation*}
I_0 = I\cap \bigcup_i \mathcal{I}(\pts_i \cap \BZ_{\RR}(Q_i),\sphrs_2).
\end{equation*}

The goal of the next sections is to bound $|I_0|$. The basic idea is that problems occur when surfaces $S\in\sphrs_2$ intersect $\BZ_{\RR}(P_i)\cap\BZ_{\RR}(Q_i)$ in one-dimensional curves, and many points lie on these curves. We will first deal with the incidences where this does not occur. Let
\begin{equation*}
\begin{split}
 I_0^\prime &= \{(p,S)\in I_0\colon p\ \textrm{is an isolated point of}\\
 &\qquad\qquad\qquad\qquad S\cap\BZ_{\RR}(P_i)\cap\BZ_{\RR}(Q_i)\ \textrm{for some index}\ i\},\\
 I_0^* &= I_0\backslash I_0^\prime.
\end{split}
\end{equation*}

By Corollary \ref{surfacesEntering3DCellCor},
\begin{equation}
\begin{split}
|I_0^\prime|&\lesssim n \sum_{i=1}^\ell D_iE_i\\
&\lesssim m^{\frac{k}{2k-1}}n^{\frac{2k-2}{2k-1}}.
\end{split}
\end{equation}
The real difficulty will be to bound $|I_0^*|$.

For each $i=1,\ldots,\ell$, let $V_i=\BZ_{\CC}(P_i)\cap\BZ_{\CC}(Q_i)$, and let $V=\bigcup_i V_i$. For each $S\in\sphrs_2$, $S^*\cap V$ is a union of isolated points and irreducible one-dimensional varieties (scheme-theoretically, $S^*\cap V$ may contain curves with embedded points, but we are only looking at the intersection set-theoretically).

If $\gamma$ is an irreducible one-dimensional variety from the above decomposition, we define $i(\gamma)$ be the smallest index $i$ so that $\gamma\subset V_i$. For each $i=1,\ldots,\ell$, define
\begin{equation*}
 \begin{split}
 \Gamma_{S,i}&=\{\gamma\ \textrm{an irreducible component of}\ S^*\cap V,\ i(\gamma)=i\},\\
 \Gamma_{S,i}^*&=\{\gamma\in\Gamma_{S,i}\colon \dim_{\RR}(\gamma(\RR))=1\}.
 \end{split}
\end{equation*}

Recall that we have partitioned the points $\pts\cap V$ in a similar fashion into the sets $\{\pts_i\}$. Thus, if $(p,S)\in I_0^*$ and $p\in \pts_i$, then at least one of the following two things must happen:
\begin{itemize}
 \item There exists $\gamma\in\Gamma_{S,i}^*$ so that $p\in\gamma$.
 \item There exists an index $j>i$ and some $\gamma\in\Gamma_{S,j}^*$ so that $p\in\gamma$. In addition, $\gamma\cap V_i$ is a discrete set.
\end{itemize}

We will now describe several different types of incidences, and bound each type in turn

\subsection{Different types of curves and incidences}\label{diffTypeCurvesIncidencesSec}
Let
\begin{equation*}
 \begin{split}
  \Gamma_{S,i}^{(1)}&=\{\gamma\in\Gamma_{S,i}^*\colon \gamma\subset (V_i)_{\sing}\},\\
  \Gamma_{S,i}^{(2)}&=\{\gamma\in\Gamma_{S,i}^*\backslash \Gamma_{S,i}^{(1)} \colon T_x(V_i)=T_x(S)\ \textrm{for a generic point}\ x\ \textrm{of}\ \gamma\},\\
  \Gamma_{S,i}^{(3)}&=\Gamma_{S,i}^*\backslash(\Gamma_{S,i}^{(1)}\cup \Gamma_{S,i}^{(2)}).
 \end{split}
\end{equation*}

We will now define several types of incidences. Let

\begin{equation}\label{defnOfAlphaS}
\alpha_{S}=\bigcup_{i}\bigcup_{\gamma\in\Gamma_{S,i}^*}\gamma.
\end{equation}
Note that if $(p,S)\in I_0^*$ and $p\in (\alpha_{S})_{\smooth} $, then there is a unique index $j$ and a unique curve $\gamma\in \Gamma_{S,j}^*$ that contains $p$. If it is clear from context, we will simply call this curve $\gamma$. Define
\begin{equation*}
 \begin{split}
  I_1 &= \{(p,S)\in I_0^*\colon p\in (\alpha_{S})_{\sing} \},\\
  I_2 &= \{(p,S)\in I_0^*\backslash I_1 \colon p\in\pts_i, \gamma\in \Gamma_{S,j}^*,\ \textrm{for some}\ j>i\},\\
  I_3 &= \{(p,S)\in I_0^*\backslash I_1 \colon p\in\pts_i, \gamma\in \Gamma_{S,i}^{(1)}\},\\
  I_4 &= \{(p,S)\in I_0^*\backslash I_1 \colon p\in\pts_i, \gamma\in \Gamma_{S,i}^{(2)}\},\\
  I_5 &= \{(p,S)\in I_0^*\backslash I_1 \colon p\in\pts_i, \gamma\in \Gamma_{S,i}^{(3)},\ p\in(V_i)_{\sing}\},\\
  I_6 &= \{(p,S)\in I_0^*\backslash I_1 \colon p\in\pts_i, \gamma\in \Gamma_{S,i}^{(3)},\ p\in(V_i)_{\smooth}\}.
 \end{split}
\end{equation*}
The above definitions make reference to an index $i$. What we mean by this is that the condition must hold for \emph{some} index $i$.

We will now bound the incidences $I_1,\ldots,I_5$. Bounding $I_6$ will require significant new tools, so this will be done in Section \ref{STOnAVarietySec}.

\subsection{Bounding $I_1,I_2,I_3$: counting singular points on algebraic curves}\label{I1I2I3Sec}

We will begin with $I_1$. For each $S\in\sphrs_2$, $\alpha_{S}$ is an algebraic curve of degree $O(\sum_{i=1}^\ell D_i)=O(D)$. Thus it has at most $O(D^2)$ singular points, so
\begin{equation}\label{I1Bound}
\begin{split}
|I_1|&\lesssim nD^2\\
& \lesssim m^{\frac{k}{2k-1}}n^{\frac{2k-2}{2k-1}}.
\end{split}
\end{equation}

Next, we will see that $I_2$ is empty. Fix $S\in\sphrs_2$. Let $p\in\pts_i$ and suppose the following conditions hold
\begin{itemize}
\item $p$ lies on only one irreducible component (i.e.~one curve) $\gamma$ of $S^*\cap V.$
\item $p$ is a smooth point of $\gamma$.
\item $\gamma\in \Gamma_{S,j}^*$ for some $j>i$.
\end{itemize}
Then $p$ is an isolated point of $S^*\cap V_j$, so $(p,S)\in I_0^\prime.$ In particular, this implies that $(p,S)\notin I_2$. We conclude that
\begin{equation}\label{I2Bound}
 |I_2|=0.
\end{equation}

We will now bound $I_3$. For each index $i$, use Lemma \ref{trappingSingLocusVariety} to find a curve $\zeta_i$ of degree $O((D_iE_i)^2)$ that contains $(V_i)_{\sing}$. Apply Lemma \ref{incidencesOnACurveLem} to bound:
\begin{equation}\label{boundI3Eqn}
\begin{split}
|I_3|&\leq \sum_{i}\Big(|\pts_i| + \sum_i (\deg\zeta_i)^2\Big)\\
&\lesssim m + \sum_i (D_iE_i)^4\\
&\leq m + \Big(\sum_i D_iE_i\Big)^4\\
&\lesssim m+m^{\frac{k}{2k-1}}n^{\frac{2k-2}{2k-1}}.
\end{split}
\end{equation}
On the second-last line we used the observation that $D_iE_i\geq 0$ for each index $i$. On the last line we used the assumption that $m\leq n^{\frac{2k+2}{3k}}$.
Thus we have
\begin{equation}\label{I3Bound}
|I_3|\lesssim m^{\frac{k}{2k-1}}n^{\frac{2k-2}{2k-1}}+m.
\end{equation}
\begin{rem}\label{whereWeUseAssumtionRemark}
\eqref{boundI3Eqn} and \eqref{controlOfI4Eqn} are the only two places where we use the assumption that $m\leq n^{\frac{2k+2}{3k}}$. Thus, if these two arguments could be avoided, we could remove the restriction that $m\leq n^{\frac{2k+2}{3k}}$ in the statement of Theorem \ref{surfacesInR4Thm}. This will be discussed further in Section \ref{removingRestrictionSection}.
\end{rem}
\subsection{Bounding $I_4$: Tangential surface intersections}\label{I4Sec}
\begin{lem}\label{findTildeQ}
Let $W\subset\CC^d$ be an irreducible variety, and let $R\in\CC[x_1,\ldots,x_d]$ be a non-zero polynomial. Suppose that $R$ vanishes on $W$. Then there exists a polynomial $\tilde R$ with the following properties
\begin{enumerate}
 \item $\deg \tilde R\leq\deg R$.
 \item $\tilde R$ vanishes on $W$.
 \item $\nabla\tilde R$ does not vanish identically on $W$.
\end{enumerate}
\end{lem}
\begin{proof}[Proof sketch]
The proof follows similar ideas to the proof of Lemma \ref{makeAPolyProdRealIdeals}, so for brevity we will only sketch it here. For each variety $W$, we will prove the result by induction on $\deg R$. If $\deg R=1$, then $\nabla R$ is non-zero everywhere, so we are done. Now suppose the result has been proved for all polynomials of degree at most $D$, and let $R$ be a polynomial of degree $D+1$. Suppose $\nabla\tilde R$ vanishes identically on $W$. Let $v$ be a generic vector. Then $R^\prime=v\cdot\nabla R$ is not the zero-polynomial, and $R^\prime$ vanishes identically on $W$. We can thus apply the induction hypothesis to $R^\prime$.
\end{proof}
\begin{lem}
For each index $i=1,\ldots,\ell$, we have the bound

\begin{equation}\label{numIncidencesCurveTangY}
|\{(p,S)\in I_4\colon p\in\pts_i\}|\lesssim  nD_iE_i+D_i^4E_i^2 + |\pts_i|.
\end{equation}
\end{lem}
\begin{proof}
Let $\{V_{i,j}\}$ be the irreducible components of $V_i.$ For each index $j$, let $\pts_{i,j}$ be the points of $\pts_i$ lying in $V_{i,j}$ that have not already been placed in some previous $\pts_{i,j^\prime}$ with $j^\prime<j$. Similarly, for each curve $\gamma\in\Gamma_{S,i}^{(2)}$, let $j(\gamma)$ be the smallest index so that $\gamma\subset V_{i,j}$. Define
\begin{equation*}
 \Gamma_{S,i,j}^{(2)}=\{\gamma\in\Gamma_{S,i}^{(2)}\colon j(\gamma)=j \}.
\end{equation*}

We will divide the incidences of $I_4$ into several types. Define
\begin{equation*}
I_4^{\prime}=\{(p,S)\in I_4\colon p\in (V_i)_{\smooth}\}.
\end{equation*}
From tangent space considerations (see Lemma \ref{controlOfSmoothPtsLem}) we have $|I_4^{\prime}|\leq|\pts_i|$. If $p\in\pts_i$, and $(p,S)\in I_4\backslash I_4^\prime$, then $p$ is incident to precisely one curve $\gamma\in \Gamma_{S,i}^{(2)}$. Define

\begin{equation*}
\begin{split}
I_4^{\prime\prime}&=\{(p,S)\in I_4\colon p\in (V_i)_{\sing},\ p\in\pts_{i,j},\ \gamma\in\Gamma_{S,i,j}^{(2)}\},\\
I_4^{\prime\prime\prime}&=\{(p,S)\in I_4\colon p\in (V_i)_{\sing},\ p\in\pts_{i,j},\ \gamma\in\Gamma_{S,i,j^\prime}^{(2)}\ \textrm{for some}\ j^\prime>j\}.
\end{split}
\end{equation*}

We will first consider $I_4^{\prime\prime\prime}$.  If $(p,\gamma)\in I_4^{\prime\prime\prime}$, then $p$ is an isolated point of $S^*\cap V_{i,j}$ (if not then the incidence $p$ would have been counted in $I_1$). Applying Proposition \ref{BezoutNonProper}, we have
\begin{equation}
\begin{split}
|I_4^{\prime\prime\prime}|&\leq\sum_{S\in\sphrs_2}\sum_{j}(\textrm{number of isolated points of}\ S^*\cap V_{i,j})\\
&\leq nD_iE_i.
\end{split}
\end{equation}

It remains to count $I_4^{\prime\prime}.$ Fix $j$. Let $\tilde P_{i,j}$ be the polynomial obtained by applying Lemma \ref{findTildeQ} to the polynomial $P_i$ and the variety $V_{i,j}$. Let $\zeta_{i,j}=\BZ_{\CC}(\tilde P_{i,j})\cap V_{i,j}.$ If $p\in V_{i,j}\backslash\zeta_{i,j}$ and if $(p,S)\in I_4^{\prime\prime}$, then $T_pS$ must lie in $T_p(\BZ_{\CC}(\tilde P_{i,j}))$, which is a three-dimensional vector space. Thus, for each point $p\in V_{i,j}\backslash\zeta_{i,j}$, there can be at most one $S\in\sphrs_2$ with $(p,S)\in I_4^{\prime\prime}$, so the total number of incidences of this type is at most $|\pts_i|$.

Finally, $\zeta_{i,j}$ is a curve of degree $O(D_i^2E_i)$, so by Lemma \ref{incidencesOnACurveLem}, the total number of incidences $(p,S)\in I_4^{\prime\prime}$ with $p\in\zeta_{i,j}$ is  $O(D_i^4E_i^2+|\pts_i|)$. We conclude that $|I_4^{\prime\prime}|\lesssim D_i^4E_i^2+|\pts_i|$.
\end{proof}
Summing the bound \eqref{numIncidencesCurveTangY} over all indices $i$ and using the assumption (from the statement of Theorem \ref{surfacesInR4Thm}) that $m\leq n^{\frac{2k+2}{3k}}$, we obtain the bound
\begin{equation}\label{controlOfI4Eqn}
|I_4| \lesssim m^{\frac{k}{2k-1}}n^{\frac{2k-2}{2k-1}}+m.
\end{equation}

\subsection{Bounding $I_5$: Transverse surface intersections}\label{I5Sec}
\begin{lem}
Let $Y\subset\CC^4$ be a smooth bounded-degree two-dimensional variety, and let $W\subset \CC^4$ be a two-dimensional variety. Let $\Gamma$ be the set of irreducible one-dimensional components of $Y\cap W$. Let
\begin{equation*}
\Gamma^\prime\subset\{\gamma\in\Gamma\colon z\in W_{\smooth}\ \textrm{and}\ T_z(Y)\neq T_z(W)\ \textrm{for generic}\ z\in\gamma \}.
\end{equation*}

Then
\begin{equation}\label{gammaWSing}
 \sum_{\gamma\in\Gamma^\prime} |\gamma\cap W_{\sing}| \leq C\Big( \deg W+ \Big(  \sum_{\gamma\in\Gamma}(\deg\gamma)\Big)^2\Big),
\end{equation}
where the constant $C$ depends only on $\deg Y$.
\end{lem}
\begin{proof}
Let $v\in\CC^4$ be a generic vector and let $\pi_v\colon\CC^4\to\CC^3$ be the projection in the direction $v$. Let
\begin{equation*}
Y^{\dagger}=\pi_v^{-1}(\pi_v(Y)).
\end{equation*}
Then since $v$ was chosen generically with respect to $Y$, $Y^{\dagger}$ is a smooth three-dimensional variety of degree $\deg(Y)$. Furthermore, $Y^{\dagger}\cap W$ is a one-dimensional curve, and each $\gamma\in\Gamma$ is a (irreducible) component of $Y^{\dagger}\cap W$. Let $\zeta$ be the union of all irreducible components of $Y^\dagger\cap W$ that are not contained in $Y$. We have $\deg\zeta\leq\deg(Y^{\dagger}\cap W)\leq C\deg W$, where the constant $C$ depends only on $\deg Y$.

Let $\gamma\in\Gamma^\prime$, and let $z\in \gamma\cap W_{\sing}$. Then $\pi_v(z)\in \pi_v(Y^{\dagger})=\pi_v(Y)$ and $z\in \pi_v(W_{\sing})\subset \overline{\pi_v(W)}_{\sing}$. Note as well that $\pi_v(Y)$ and $\pi_v(W)$ intersect transversely at a generic point of $\pi_v(\gamma)$. Thus by Lemma \ref{singPtCor}, we have that $\pi(z)$ is a singular point of $\pi_v(Y)\cap\pi_v(W)$. But this implies that $z$ is a singular point of $Y^\dagger\cap W$. In particular, at least one of the following three things must occur:
\begin{itemize}
 \item \itemizeEqnVSpacing \begin{equation}\label{zInAlphaSZeta}
z\in \zeta\cap Y,
\end{equation}
or
\item \itemizeEqnVSpacing \begin{equation}\label{zInAlphaS}
z\in\gamma_{\sing},
\end{equation}
or
\item \itemizeEqnVSpacing \begin{equation}\label{zHitsSomeOtherGamma}
z\in \bigcup_{\gamma^\prime\in\Gamma,\ \gamma^\prime\neq\gamma}\gamma\cap\gamma^\prime.
\end{equation}
\end{itemize}

Now, considering all $\gamma\in\Gamma^\prime$, we conclude there can be $O(W)$ points of the form \eqref{zInAlphaSZeta}, at most $\sum_{\gamma\in\Gamma^\prime}(\deg\gamma)^2$ points of the form \eqref{zInAlphaS}, and at most $\Big(  \sum_{\gamma\in\Gamma}(\deg\gamma)\Big)^2$ points of the form \eqref{zHitsSomeOtherGamma}. This establishes \eqref{gammaWSing}.
\end{proof}
Applying Lemma \ref{gammaWSing} to the sets $\{V_i\}$ and bounded-degree smooth surfaces $\{S^*\colon S\in\sphrs_2\},$ with $\Gamma^\prime = \Gamma_{S,i}^{(3)},$ we conclude that
\begin{equation}
\begin{split}
|I_5|&\lesssim n\sum_i D_iE_i + n\sum_i D_i^2\\
&\lesssim m^{\frac{k}{2k-1}}n^{\frac{2k-2}{2k-1}}.
\end{split}
\end{equation}

It remains to bound $|I_6|$. Doing so will require several new tools, which we will discuss in the next section.
\section{Interlude: The gauss map of a Variety}\label{diffGeomSec}
\subsection{Grassmannians and the Gauss map}\label{GaussMapSection}
Let $F=\RR$ or $\CC$. If $v_1,\ldots,v_k\in F^d$ are vectors, let $\langle v_1,\ldots,v_k\rangle\subset F^d$ be the vector space spanned by $v_1,\ldots,v_k$. In practice, the vectors $v_1,\ldots,v_k$ will be linearly independent. Let $\grass(k,d;F)$ be the Grassmannian of $k$--dimensional vector subspaces of $F^d$. We will identify elements of $\grass(k,d;F)$ with planes in $F^d$ passing through the origin, and we will usually use the variable $\Pi$ for these planes.

$\grass(k,d;\CC)$ has the structure of a projective variety (see i.e.~\cite[chapter 6]{harris}). $\grass(k,d;F)$ is a smooth variety, and also a smooth (real or complex) manifold. If $\Pi\in\grass(k,d;F)$, then $T_\Pi\grass(k,d;F)$ is the tangent plane to $\grass(k,d;F)$ at $\Pi$, and $T\grass(k,d;F)$ is the tangent bundle.

If $\Pi,\Pi^\prime$ are vector spaces, let $\Pi+\Pi^\prime$ denote the sum of the two vector spaces. We have
\begin{equation*}
\dim_F(\Pi+\Pi^\prime)=\dim(\Pi)+\dim_F(\Pi^\prime)-\dim_F(\Pi\cap \Pi^\prime).
\end{equation*}

\begin{defn}\label{defnOfAv}
If $\Pi^\prime\in \grass(d-k,d;F)$, let
\begin{equation}
A_{\Pi^\prime}=\{\Pi\in\grass(k,d;F)\colon\dim_{F}(\Pi+\Pi^\prime)<d\}.
\end{equation}
$A_{\Pi^\prime}$ is a codimension-one sub-variety of $\grass(k,d;F)$.
\end{defn}
\subsubsection{Orientation}\label{orientationSection} When working over $\RR$, we will frequently consider pairs of vector spaces $(\Pi,\Pi^\prime)$ with $\Pi\in\grass(k,d;\RR)$ and $\Pi^\prime\in\grass(d-k,d,\RR)$. If $\Pi=\langle v_1,\ldots,v_k\rangle$ and $\Pi^\prime =\langle v_{k+1},\ldots,v_d\rangle$, where $v_1,\ldots,v_k$ and $v_{k+1},\ldots,v_{d}$ are orthogonal unit vectors, then $\dim_{\RR}(\Pi+\Pi^\prime)<d$ if and only if $\det(v_1,\ldots,v_d)=0$. If $\dim_{\RR}(\Pi+\Pi^\prime)=d,$ we wish to make sense of the expression $\det(\Pi,\Pi^\prime)$.

A reasonable first definition would be to define $\det(\Pi,\Pi^\prime)=\det(v_1,\ldots,v_d)$. While the magnitude $|\det(\Pi,\Pi^\prime)|$ is well-defined, the sign is not---if we permute two vectors from $\{v_1,\ldots,v_k\}$ or $\{v_{k+1},\ldots,v_d\}$, then the sign of the above determinant changes.

Ideally, we would like to find continuous functions $v_1(\Pi),\ldots,v_k(\Pi)$ and $v_{k+1}(\Pi^\prime),\ldots,v_{d}(\Pi^\prime)$ so that we can define
\begin{equation}\label{provisionalDet}
\det(\Pi,\Pi^\prime)=\det\big(v_1(\Pi),\ldots,v_k(\Pi),v_{k+1}(\Pi^\prime),v_d(\Pi^\prime)\big).
\end{equation}
While we cannot do this globally, we can do so locally.
\begin{lem}\label{ctsVecLem}
Fix $\Pi_0\in\grass(k,d;\RR)$ and $\Pi_0^\prime\in\grass(d-k,d,\RR)$. Then there exist small neighborhoods $U\subset \grass(k,d;\RR)$ and $U^\prime\subset \grass(d-k,d;\RR)$ of $\Pi$ and $\Pi^\prime$, respectively, and functions $v_1,\ldots,v_k\colon U\to\RR^d$, $v_{k+1},\ldots,v_d\colon U^\prime\to\RR^d$ so that $v_1(\Pi),\ldots,v_k(\Pi)$ and $v_{k+1}(\Pi^\prime),\ldots,v_{d}(\Pi^\prime)$ are orthogonal unit vectors, and
\begin{equation*}
\Pi=\langle v_1(\Pi),\ldots,v_k(\Pi)\rangle,\ \ \Pi^\prime = \langle v_{k+1}(\Pi^\prime),\ldots,v_{d}(\Pi^\prime)\rangle.                                                                                                                                                                                                                                                                                                                                                                                                                                                                                                                                                                                                                                                                                                                                                                                                                                                                                                                              \end{equation*}
\end{lem}
We can now make sense of the expression \eqref{provisionalDet}. This observation will be used in the next lemma, which is an analogue of the
intermediate value theorem. In this lemma it is essential that we work over $\RR$.
\begin{lem}[Intermediate value theorem]\label{intermediateValueThmLem}
Let $\Pi_0\in \grass(k,d;\RR)$, $\Pi_0^\prime\in \grass(d-k,d;\RR)$. Then we can find a neighborhood $U\subset\grass(k,d;\RR)$ of $\Pi_0$ so that the following holds. If $\eta\colon [0,1]\to U$ is continuous, and if
\begin{equation*}
\Big(\det(\eta(0),\Pi^\prime)\Big)\Big(\det(\eta(1),\Pi^\prime)\Big)<0,
\end{equation*}
then there exists $t\in(0,1)$ so that $\eta(t)\in A_{\Pi^\prime}$.
\end{lem}
\begin{proof}
Let $U$ be the neighborhood of $\Pi_0$ from Lemma \ref{ctsVecLem}, and let $v_1(\Pi),\ldots,$ $v_k(\Pi)$ and $v_{k+1}(\Pi^\prime),\ldots,v_{d}(\Pi^\prime)$ be the corresponding unit vectors. Then the function
\begin{equation}
f(t)=\det\big(v_1(\eta(t)),\ldots,v_k(\eta(t)),v_{k+1}(\Pi_0^\prime),\ldots,v_{d}(\Pi_0^\prime)\big)
\end{equation}
is continuous, and $f(0)f(1)<0$. So by the intermediate value theorem, we can find $t\in(0,1)$ with $f(t)=0$. But this implies that $\eta(t) \in A_{\Pi_0^\prime}$.
\end{proof}
\subsubsection{The Gauss map and Gauss image of a variety}
In this section we will define the Gauss map and discuss a few of its properties. Further information can be found in \cite[Chapter 15]{harris}.
\begin{defn}
Let $Z\subset\CC^d$ be a $k$--dimensional variety. For each point $z\in Z_{\smooth},$ we define the Gauss map
\begin{equation}\label{GaussMapDefn}
G(z;Z)= T_z(Z)\in\grass(k,d;\CC),
\end{equation}
and the extended Gauss map
\begin{equation}\label{GaussMapDefn}
G^\dagger(z;Z)= (z,T_z(Z))\in\CC^d \times \grass(k,d;\CC).
\end{equation}

Following notation from \cite{harris}, we define $\mathcal{F}(Z)$ to be the Zariski closure of $G(Z_{\smooth};Z)$, and we define $\mathcal{F}^{\dagger}(Z)$ to be the Zariski closure of $G^\dagger(Z_{\smooth};Z)$.

\end{defn}

\subsubsection{Some transversality  arguments}
Let $Z\subset\CC^d$ be a $k$--dimensional variety, and let $\zeta\subset Z$ be an irreducible curve, with $\zeta\not\subset Z_{\sing}$. We will be interested in the tangent planes to $Z$ at points $z\in\zeta$. To make this more precise, define
\begin{equation*}
\mathcal{G}_{Z,\zeta} = \overline{G(\zeta\cap Z_{\smooth};Z)},
\end{equation*}
The idea is that $\mathcal{G}_{Z,\zeta}$ is the closure of the set of $k$--planes tangent to $Z$ at some point $z\in\zeta\cap Z_{\smooth}$. $\mathcal{G}_{Z,\zeta}$ is a variety of dimension at most one.

\begin{lem}\label{fixedPivLem}
Let $\Pi\in\grass(k,d;\CC),$ and let $v\in T_\Pi(\grass(k,d;\CC))$ be generic (with respect to $\Pi$). Then the variety
\begin{equation*}
X_{\Pi,v}:=\overline{\{\Pi^\prime\in\grass(d-k,d;\CC)\colon \Pi\in (A_{\Pi^\prime})_{\smooth},\ v\in T_{\Pi}(A_{\Pi^\prime})\}}
\end{equation*}
is a subvariety of $\grass(d-k,k;\CC)$ of codimension $\geq 2.$
\end{lem}
\begin{proof}
This is simply the observation that the requirements $\Pi\in A_{\Pi^\prime}$ and $v\in T_{\Pi}(A_{\Pi^\prime})$ are independent constraints on $\Pi^\prime$.

More precisely, note that $\{\Pi^\prime\in\grass(d-k,d;\CC)\colon \Pi\in A_{\Pi^\prime}\}$ is a codimension-one subvariety of $\grass(d-k,k;\CC)$ (indeed, it is isomorphic to $A_{\Pi}$) that contains $X_{\Pi,v}$. Suppose there is some irreducible component $Z\subset \{\Pi^\prime\in\grass(d-k,d;\CC)\colon \Pi\in A_{\Pi^\prime}\}$ that is contained in $X_{\Pi,v}$. In particular, this would imply that the set $\{\Pi^\prime\in Z\colon \Pi\in (A_{\Pi^\prime})_{\smooth}\}$ is Zariski-dense in $Z$.

If $Z$ is contained in $X_{\Pi,v}$, this implies that for a generic choice of $v\in T_\Pi(\grass(k,d;\CC))$, we have $v\in T_{\Pi}(A_{\Pi^\prime})$ for a dense set of $\Pi^\prime\in Z$. This implies that for a dense set of $\Pi^\prime\in Z$, $ v\in T_{\Pi}(A_{\Pi^\prime})$ for generic (and thus every) $v\in T_\Pi(\grass(k,d;\CC))$. But, if $\Pi^\prime$ is a smooth point of $Z$ then the tangent plane $T_{\Pi}(Z)$ has codimension-one in $T_{\Pi}A_{\Pi^\prime}$.

We conclude that no irreducible components of $\{\Pi^\prime\in\grass(d-k,d;\CC)\colon \Pi\in A_{\Pi^\prime}\}$ can be contained in $X_{\Pi,v}$. Thus the codimension of $X_{\Pi,v}$ in $\grass(d-k,k;\CC)$ is at least two.
\end{proof}

\begin{lem}\label{genericCurveGrassmanIntersect}
Let $Z\subset\CC^d$ be a $k$--dimensional variety, and let $\zeta\subset Z$ be an irreducible curve, with $\zeta\not\subset Z_{\sing}$. If we select a generic (with respect to $Z$ and $\zeta$) element $\Pi^\prime\in\grass(d-k,d,\CC),$ then for all pairs $(z,\Pi)$ with $z\in\zeta_{\smooth}\cap Z_{\smooth},$ $(z,\Pi)\in \mathcal{F}^{\dagger}(Z)$, and $\Pi\in A_{\Pi^\prime}$, we have that $\Pi$ is a smooth point of $A_{\Pi^\prime}$, and $\mathcal{G}_{Z,\zeta}$ is transverse to $A_{\Pi^\prime}$ at $\Pi$.
\end{lem}
\begin{proof}
The idea is the following. For each point $z\in \zeta\cap Z_{\smooth}$ there is a unique tangent plane $T_z(Z)\in\grass(k,d;\CC)$. The set of all such points forms a curve $\alpha$ in $\grass(k,d;\CC)$ (technically, we need to take the closure of this curve in the Zariski topology). Now, given a $(d-k)$--plane $\Pi^\prime\in\grass(d-k,d;\CC)$, we can ask: does the variety $A_{\Pi^\prime}$ meet the curve $\alpha$ tangentially? We will show that for generic $\Pi^\prime\in\grass(d-k,d;\CC)$, the answer is no. The reason is that for each point $\Pi\in\alpha$, the set of planes $\Pi^\prime\in\grass(d-k,d;\CC)$ that hit the point $\Pi$ and that are tangent to $\alpha$ at $\Pi$ are contained in a codimension-two sub-variety of $\grass(d-k,d;\CC)$. Since $\alpha$ is one-dimensional, the set of planes $\Pi^\prime$ that are tangent at \emph{some} point $\Pi\in\alpha$ is contained in a codimension--one variety. This means that generically, this doesn't happen.

Now for the details. Let
\begin{equation}
\begin{split}
M = \{((\Pi,v),&\Pi^\prime)\in (T\grass(k,d;\CC))\times \grass(d-k,d;\CC)\colon \\
&\Pi\in \mathcal{G}_{Z,\zeta},\ v\in T_{\Pi}(\mathcal{G}_{Z,\zeta}), \Pi\in (A_{\Pi^\prime})_{\smooth},\ v\in T_{\Pi}(A_{\Pi^\prime})\}.
\end{split}
\end{equation}
We wish to show that $\dim \overline{M}\leq \dim(\grass(d-k,d;\CC))-1.$ By Lemma \ref{fixedPivLem}, for each $(\Pi,v)\in (T\grass(k,d;\CC))$ with $\Pi\in\mathcal{G}_{Z,\zeta}$ and $v\in T_{\Pi}(\mathcal{G}_{Z,\zeta}),$ the set of $\Pi^\prime$ so that $((\Pi,v),\Pi^\prime)\in M$ is contained in a variety of dimension $\dim(\grass(d-k,d;\CC))-2$, i.e. each fiber of the projection map $M\to T\grass(k,d;\CC)$ has dimension at most $\dim(\grass(d-k,d;\CC))-2$. But since $\mathcal{G}_{Z,\zeta}$ has dimension at most one, the image of the map $M\to T\grass(k,d;\CC)$ has dimension at most 1, so $M$ has dimension at most $\dim(\grass(d-k,d;\CC))-1.$

Thus if we consider the projection $M\to \grass(d-k,d;\CC),$ the image of this projection is contained in a proper sub-variety of $\grass(d-k,d;\CC)$, i.e.~a generic element $\Pi^\prime\in \grass(d-k,d;\CC)$ is not contained in the image of the projection.

All that remains is to show that for generic $\Pi^\prime$, $\Pi$ is a smooth point of $A_{\Pi^\prime}$ for all $(z,\Pi)$ with $z\in\zeta_{\smooth}\cap Z_{\smooth},\ (z,\Pi)\in \mathcal{F}^\dagger(Z)$, and $\Pi\in\mathcal{A}_{\Pi^\prime}$. But if we define
\begin{equation}
\begin{split}
M_1 = \{((\Pi,v),&\Pi^\prime)\in (T\grass(k,d;\CC))\times \grass(d-k,d;\CC)\colon \\
&\Pi\in \mathcal{G}_{Z,\zeta},\ v\in T_{\Pi}(\mathcal{G}_{Z,\zeta}), \Pi\in (A_{\Pi^\prime})_{\sing}\},
\end{split}
\end{equation}
then since $\dim (A_{\Pi^\prime})_{\sing}\leq\dim(d-k,d;\CC)-2,$ a similar argument shows that a generic element $\Pi^\prime\in \grass(d-k,d;\CC)$ is not contained in the image of the projection $M_1\to\grass(d-k,d;\CC)$.
\end{proof}

\begin{lem}\label{changeSignAcrossCurve}
Let $Z\subset\CC^d$ be a $k$--dimensional variety defined by real polynomials, and let $\zeta\subset Z$ be an irreducible curve defined by real polynomials. Suppose that $Z(\RR)$ is $k$--dimensional, $\zeta(\RR)$ is one-dimensional, and that $\zeta$ is not contained in $Z_{\sing}$. Let $\Pi^\prime\in\grass(d-k,d,\RR)$ be chosen generically with respect to $Z$ and $\zeta$.

Let $z\in \zeta(\RR)_{\smooth}\cap Z(\RR)_{\smooth}$ and suppose that $T_z(Z)\in A_{\Pi^\prime}$. Then for all $\epsilon>0$ we can find an interval $I\subset\zeta$ centered at $z$ with the following two properties. 1) $I$ has arclength $\leq\epsilon$. 2) If $z_1,z_2$ are the two endpoints of $I$, and if $\Pi_1 = T_{z_1}(Z(\RR)),\ \Pi_2 = T_{z_2}(Z(\RR)),$ then
\begin{equation}\label{prodDetLZero}
\Big(\det(\Pi_1,\Pi^\prime)\Big)\Big(\det(\Pi_2,\Pi^\prime)\Big)<0.
\end{equation}
\end{lem}
\begin{rem} See Section \ref{orientationSection} for a discussion of the definition of $\det$. In this context, the determinant is only defined up to a choice of sign. However, the statement that two determinants have opposite sign is well-defined regardless of the orientation chosen (again, in a small neighborhood of a point $\Pi\in\grass(k,d;\RR)$).
\end{rem}
\begin{proof}[Proof of Lemma \ref{changeSignAcrossCurve}]
In brief, the proof consists of the following observation. If $U\subset\RR^\ell$ is an open set and $A\subset U$ is a smooth manifold such that $U\backslash A$ contains two connected components, and if $\alpha\subset U$ is a smooth curve that meets $A$ transversely at the point $x\in \alpha \cap A$, then $\alpha$ must enter both connected components of $U\backslash A$. Furthermore, we can find points $x_1,x_2\in\alpha$ arbitrarily chose to $x$ such that $x_1,x_2$ lie in separate connected components of $U\backslash A$.

Now for the proof. Let $\Pi_0=T_z(Z)$. By Lemma \ref{genericCurveGrassmanIntersect}, $\Pi^\prime$ is a smooth point of $A_{(\Pi^\prime)^*}$, and  $\mathcal{G}(\zeta,Z)$ is transverse to $A_{(\Pi^\prime)^*}$ at $\Pi^\prime$ (recall that $\Pi^\prime$ is a real $(d-k)$--plane, and $(\Pi^\prime)^*$ is its complexification). Since $\zeta(\RR)$ is one-dimensional and $\Pi^\prime$ was chosen generically, we can assume that $z$ is a smooth point of $\zeta$, and thus $\zeta(\RR)$ is a one-dimensional smooth manifold in a (Euclidean) neighborhood of $z$.

We conclude that $\mathcal{G}(\zeta(\RR),Z(\RR))$ is a smooth curve in $\grass(k,d;\RR)$ in a neighborhood of the (real) $k$--plane $\Pi_0(\RR)\in\grass(k,d;\RR)$, and this curve is transverse to $A_{\Pi^\prime(\RR)}$ at the point $\Pi_0(\RR)$. Use Lemma \ref{ctsVecLem} to choose a small neighborhood $U$ of $\Pi_0(\RR),$ and functions $v_1(\Pi),\ldots,v_k(\Pi)\colon U\to\RR^d$ so that $\Pi=\operatorname{span}\{v_1(\Pi),\ldots,v_k(\Pi)\}$. This allows us to define $\det(\Pi,\Pi^\prime)$ for all $\Pi\in U$.

Now, after possibly shrinking $U$, $A_{\Pi^\prime}\cap U$ is a smooth manifold, and $A_{\Pi^\prime}\cap U$ cuts $\grass(k,d;\RR)\cap U$ into two connected regions; one where $\det(\Pi,\Pi^\prime)>0$ and one where $\det(\Pi,\Pi^\prime)<0.$ Select two points $z_1,z_2\in \gamma(\RR)$ so that $T_{z_1}(\gamma(\RR),Z(\RR))\in U,\ T_{z_2}(\gamma(\RR),Z(\RR))\in U,$ and with $T_{z_1}(\gamma(\RR),Z(\RR))$ and $T_{z_2}(\gamma(\RR),Z(\RR))$ in opposite regions. Since $\mathcal{G}(\zeta(\RR),Z(\RR))$ is transverse to $A_{\Pi^\prime}\subset\grass(k,d;\RR)$ at $\Pi_0(\RR)$, \eqref{prodDetLZero} holds with this choice of $z_1,z_2$.
\end{proof}

\subsubsection{Complex varieties and some perturbative arguments}
\begin{defn}
If $k<d$ and $\pi_v\colon F^d\to F^{d-1}$ is a projection in the direction $v$, let
\begin{equation*}
\tilde\pi\colon\{\Pi\in \grass(k,d;F)\colon v\notin \Pi\} \to\grass(k,d-1;F)
\end{equation*}
be the associated map on the Grassmannian.
\end{defn}

We end with the following observation. Let $Z\subset\CC^4$ be a two-dimensional variety and let $\pi_v\colon\CC^4\to\CC^3$ be a projection. If $z\in Z_{\smooth},$ $\Pi\in G(z;Z),$ and $v\notin \Pi,$ then $\tilde\pi(\Pi)$ is two-dimensional and $\tilde\pi(\Pi)\in G(z;\overline{\pi(Z)})$.

\subsection{Perturbations and the Gauss map}
In this section we will prove a technical lemma that will be useful when we have to cut the surfaces $V_j(\RR)$ into pieces. In the next section we will be confronted with an irreducible two-dimensional variety $W$ that is a component of the intersection $Z_{\CC}(R_1)\cap Z_{\CC}(R_2)$. We will need to understand smooth points $z\in W_{\smooth}$ where the tangent plane $T_zW$ has certain properties. Ideally, $T_zW$ would be given by the two-dimensional vector space orthogonal to $\langle\nabla R_1(z),\nabla R_2(z)\rangle$. However, if $\langle\nabla R_1(z),\nabla R_2(z)\rangle$ is instead a zero or one-dimensional vector space, then this will not work. Instead, we will consider $\langle\nabla R_1(z^\prime),\nabla R_2(z^\prime)\rangle,$ where $z^\prime$ is a point close to $z$. If we set things up carefully, then we can recover information about $T_zW$ from $\langle\nabla R_1(z^\prime),\nabla R_2(z^\prime)\rangle$. This is made precise in Corollary \ref{perturbGaussMapLem}.

\begin{defn}
For $R\in \CC[x_1,\ldots,x_4]$ and $z_0\in\CC^4$, define $R^{z_0}=R(z)-R(z_0)$. In particular, $\BZ_{\CC}(R^{(z_0)})=\BZ_{\CC}(R(z)-R(z_0))$. This is the level set of $R$ passing through the point $z_0$.
\end{defn}

 The following lemma is rather technical; the reader may wish to first look at Corollary \ref{perturbGaussMapLem}, which may provide some motivation.
\begin{lem}\label{projectedPerturbLem}
Let $R_1,R_2\in\CC[x_1,\ldots,x_4]$, and let $W$ be an irreducible component of $Z_{\CC}(R_1)\cap Z_{\CC}(R_2)$ and let $v,v_1\in\CC^4$ be generic vectors (see Remark \ref{explainGen}). Then there exists a curve $\zeta\subset W$ so that the following conditions hold.

\begin{itemize}
\item $\deg(\zeta)\lesssim (\deg R_1+\deg R_1)\deg W.$
 \item For all $z\in W_{\smooth}\backslash \zeta,$ $v_1\notin T_z(W)$.
 \item If $\tilde\pi=\tilde\pi_{v_1}\colon\grass(2,4;\CC)\to\grass(2,3;\CC)$ is the corresponding map on the Grassmannian, $z\in W_{\smooth}\backslash \zeta$, and if $U\subset\CC$ is a sufficiently small neighborhood of 0, then the map \begin{equation}
\begin{split}
\rho_{\pi,z}\colon U&\to \grass(2,3;\CC),\\
t&\mapsto \tilde\pi \Big(T_{z+tv}\big(Z_{\CC}(R_1^{(z+tv)})\cap Z_{\CC}(R_2^{(z+tv_1)})\big)\Big)
\end{split}
\end{equation}
is continuous on $U$.
\end{itemize}
\end{lem}
\begin{rem}\label{explainGen}
When we say that $v$ is generic with respect to $R_1,R_2,W$, we mean that given any $R_1,R_2,W$, there is a Zariski open set $O\subset \CC^4$ so that the lemma holds for any $v, v_1\in O$.
\end{rem}
\begin{rem}
Heuristically, Lemma \ref{projectedPerturbLem} says that the tangent plane to the level set of $R_1$ and $R_2$ passing through $z\in W$ is similar to that of the level set of $R_1$ and $R_2$ passing through $z+tv$, provided $|t|$ is small and $z$ does not lie on a small bad set. More precisely, Lemma \ref{projectedPerturbLem} says that the (generic) projections of the two tangent planes into $\CC^3$ are similar. Corollary \ref{perturbGaussMapLem} will let us recover the result about tangent planes in $\CC^4$.
\end{rem}

\begin{rem}Let us understand the map $\rho_{\pi,z}$. For $t\in\CC$, let $z^\prime = z+tv$. Let
\begin{equation*}
W^\prime=Z_{\CC}(R_1^{(z+tv)})\cap Z_{\CC}(R_2^{(z+tv)}).
\end{equation*}
This is the intersection of the level sets of $R_1$ and $R_2$ that pass through $z^\prime$. Then the image of $t$ under the above map is the projection of $T_{z^\prime}(W^\prime)$ to $\CC^3$ (the projection is given by $\pi$).
\end{rem}
\begin{proof}[Proof of Lemma \ref{projectedPerturbLem}]
Let $L\subset\CC^4$ be a 3--plane orthogonal to $v$, and let
\begin{equation*}
B_1^\prime=\{z\in L\colon \operatorname{rank}\binom{\pi(\nabla R_1(z+tv))}{\pi(\nabla R_2(z+tv))}\leq 1\ \textrm{for all}\ t\in\CC\}.
\end{equation*}
First, consider the set
\begin{equation}\label{nablaSingSetProj}
\{z\in \CC^4\colon \operatorname{rank}\binom{\pi(\nabla R_1(z))}{\pi(\nabla  R_2(z))}\leq 1\}.
\end{equation}
If $v_1$ (and thus $\pi$) is chosen generically\footnote{More precisely, for every choice of $R_1$ and $R_2$, there is a dense Zariski open subset of $\CC^4$ so that if $v_1$ lies in this open subset then the desired property holds} with respect to $R_1$ and $R_2$, then \eqref{nablaSingSetProj} is not all of $\CC^4$. Indeed, it is a proper algebraic variety of dimension at most three and degree $O(\deg R_1+\deg R_2)$. In particular, the intersection of \eqref{nablaSingSetProj} with a generic (with respect to $R_1, R_2, v_1$ and $v$) translate of $L$ has dimension at most two. This implies that $B_1^\prime$ is contained in a two-dimensional variety $B_1^{\prime\prime}$ of degree $O(\deg R_1+\deg R_2)$ (to obtain such a variety, simply intersect the set \eqref{nablaSingSetProj} with a generic translate of $L$).

Let $B_1^{\prime\prime\prime}=\pi_v^{-1}(\pi_v(B_1^{\prime\prime}))$ be the extension of $B_1^{\prime\prime}$ in the direction $v$. So $B_1^{\prime\prime\prime}$ is a three dimensional variety of degree $O(\deg R_1+\deg R_2).$ Let $B_1=B_1^{\prime\prime\prime}\cap W$. We can assume that $\dim_{\CC}(B_1)\leq 1$. Indeed, if $\dim_{\CC}(B_1)=2$, then $B_1=W$, and this would imply that $\eqref{nablaSingSetProj}=\CC^4$, and we have already shown that this is not the case.

Let $z\in W_{\smooth}\backslash B_1$. Then for any $t\neq 0$ in a sufficiently small (Euclidean) neighborhood of $0$, we have $\pi(\nabla R_1(z+tv))\times \pi(\nabla R_2(z+tv))\neq 0$. Note that since $\pi(\nabla R_1(z+tv))$ and $\pi(\nabla R_2(z+tv))$ are vectors in $\CC^3$, the cross product is well-defined.

Thus, we can define
\begin{equation*}
\lambda(z,t) = \frac{\pi(\nabla R_1(z+tv))\times \pi(\nabla R_2(z+tv)) }{|\pi(\nabla R_1(z+tv))\times \pi(\nabla R_2(z+tv))|}.
\end{equation*}
Note that if $z \in W\backslash B_1$, then $|\pi(\nabla R_1(z+tv))\times \pi(\nabla R_2(z+tv))|$ does not vanish identically in $t$. Write
\begin{equation*}
\pi(\nabla R_1(z+tv)\times \pi(\nabla R_2(z+tv))=(v_1(z,t),v_2(z,t),v_3(z,t)).
\end{equation*}
We can expand $v_j(z,t)=\sum t^i\theta_{i,j}(z)$. For each $j=1,2,3,$ let $i_j$ be the minimum index so that $\theta_{i,j}(z)$ doesn't vanish identically on $W$. For notational convenience, we'll assume that $i_1=\min(i_1,i_2,i_3)$ (if not, then just permute the indices). Let $B_2=W\cap Z_{\CC}(\theta_{i_1})$. Note that $\deg(\theta_{i_1})\lesssim \deg R_1 + \deg R_2$.

Let $v_j^{\dagger}(z,t)=t^{-i_1}(z,t)$, and let
\begin{equation*}
\lambda(z,t)^{\dagger}=\frac{(v_1^{\dagger}(z,t),v_2^{\dagger}(z,t),v_3^{\dagger}(z,t))}{\big(|v_1^{\dagger}(z,t)|^2+|v_2^{\dagger}(z,t)|^2+|v_3^{\dagger}(z,t)|^2\big)^{1/2}}.
\end{equation*}

Define $\zeta=B_1\cup B_2$ and let $W^\prime = W_{\smooth}\backslash\zeta$. If $z\in W^\prime$, then the denominator of $\lambda^\dagger(z,t)$ does not vanish when $t=0$, so in particular $\lambda^\dagger(z,t)$ is a smooth function of $t$ in a neighborhood of $0$. Furthermore, for $t\neq 0$, $\lambda^{\dagger}(z,t)=\lambda(z,t)$, and for all $t$ (in a neighborhood of 0), $\lambda^\dagger(z,t)$ is the normal vector to the 2--plane $\tilde\pi(\rho_z(t))$. This implies that $\tilde\pi \rho_z(t)$ is continuous for $t$ in a neighborhood of $t=0$, as desired.
\end{proof}
\begin{cor}\label{perturbGaussMapLem}
Let $R_1,R_2\in\CC[x_1,\ldots,x_4]$, and let $W$ be an irreducible component of $\BZ_{\CC}(R_1)\cap \BZ_{\CC}(R_2)$. Let $v\in\CC^4$ be a generic vector (as described in Remark \ref{explainGen}). Then there exists a curve $\zeta$ (depending only on $W$ and $v$) with
\begin{equation*}
\deg(\zeta)\lesssim (\deg R_1+\deg R_2)\deg W
 \end{equation*}
 so that if $z\in W_{\smooth}\backslash \zeta$ and if $U\subset\CC$ is a sufficiently small neighborhood of 0, then the map
\begin{equation}\label{toLevelSet}
\rho_z\colon t\mapsto T_{z+tv}\big(\BZ_{\CC}(R_1^{(z+tv)})\cap \BZ_{\CC}(R_2^{(z+tv)})\big)
\end{equation}
is continuous on $U$.
\end{cor}

\begin{proof}
Let $v_1,v_2,v_3,v_4$ be generic vectors, and apply Lemma \ref{projectedPerturbLem} to the collection $\{R_1,R_2,W,$ $v,v_i\},\ i=1,2,3,4.$ Let $U_1,U_2,U_3,U_4\subset\CC$ be the resulting open neighborhoods of 0, and let $\zeta_1,\zeta_2,\zeta_3,\zeta_4\subset W$ be the resulting curves. Let $U=U_1\cap U_2\cap U_3\cap U_4$ and let $\zeta=\zeta_1\cup\zeta_2\cup\zeta_3\cup\zeta_4$.

By Lemma \ref{projectedPerturbLem} the maps
\begin{equation}
t\mapsto \tilde\pi_{v_i} \Big(T_{z+tv}\big(Z_{\CC}(R_1^{(z+tv)})\cap Z_{\CC}(R_2^{(z+tv_1)})\big)\Big),\ i=1,2,3,4
\end{equation}
are continuous for $t\in U$ and $z\in W\backslash \zeta$. However, the map
\begin{equation*}
\begin{split}
\psi\colon&\grass(2,4;\CC)\to (\grass(2,3;\CC))^4,\\
&\Pi\mapsto(\tilde\pi_{v_1}(\Pi),\ldots,\tilde\pi_{v_4}(\Pi))
\end{split}
\end{equation*}
has full rank at every point $\Pi_0$ for which $(\Pi_0+\langle v_1\rangle)\cap(\Pi_0+ \langle v_2\rangle)\cap(\Pi_0+\langle v_3\rangle)\cap(\Pi_0+\langle v_4\rangle)=\Pi_0$. Since $v_1,v_2,v_3,v_4$ were chosen generically, this condition will hold at every point. This implies that the map
\begin{equation}
t\mapsto T_{z+tv}\big(Z_{\CC}(R_1^{(z+tv)})\cap Z_{\CC}(R_2^{(z+tv_1)})\big)
\end{equation}
is continuous for $t\in U$ and $z\in W\backslash\zeta$.
\end{proof}
\section{Bounding $I_6$: Cutting a variety into open regions}\label{STOnAVarietySec}
In order to bound the incidences in $I_6$, we will cut each surface $V_i(\RR)$ and each curve $\{\gamma(\RR)\colon\gamma\in\Gamma_{S,i}^{(3)}\}$ into pieces. On each piece of $V_i(\RR)$, we will have an arrangement of points and curves. In later sections, we will apply the crossing lemma to each of these arrangements to bound the number of point-curve incidences in terms of the number of curve-curve crossings, plus an error term.

\subsection{Defining some bad points on the curves}
In this section we will define various bad points on the curves in $\Gamma_{S,i}^{(3)}$. After these points are removed, the real locus of $\gamma$ will consist of a collection of simple open curves, which will be amenable to crossing lemma type arguments. First, we must deal with a small technical annoyance.
\subsubsection{Incidences occurring on the bad sets $\zeta$}\label{dealingWithBadSetsSec}
Fix a generic vector $v\in\CC^4$. For each index $i=1,\ldots,\ell$ and each irreducible component $W\subset V_i$, let $\zeta_W$ be the bad set obtained by applying Corollary \ref{perturbGaussMapLem} to $W$, using the generic vector $v$. Let
\begin{equation}\label{defnZetaI}
\zeta_i = \bigcup_{W}\zeta_W,
\end{equation}
where the union is taken over all irreducible components $W\subset V_i$, and let
\begin{equation*}
I_7=\{(p,S)\colon \gamma\subset \zeta_i\}.
\end{equation*}

By Corollary \ref{perturbGaussMapLem}, $\zeta_i$ has degree $\sum_{W} O(D_i+E_i)\deg W=O(D_iE_i^2).$ Thus if we define
\begin{equation*}
\zeta= \bigcup_i\zeta_i,
\end{equation*}
then $\deg \zeta=O(\sum_i(D_iE_i^2))$. By Lemma \ref{incidencesOnACurveLem},
\begin{equation}\label{I8BoundEqn}
|I_7|\lesssim m^{\frac{k}{2k-1}}n^{\frac{2k-2}{2k-1}}+m.
\end{equation}

The idea is that we will decompose each variety $V_i(\RR)_{\smooth}$ into a disjoint collection of pieces, each of which is homeomorphic to an open subset of $\RR^2$. With a few exceptions, incidences between points on $V_i(\RR)_{\smooth}$ and curves lying in $\Gamma_{S,i}^{(3)}$ will be counted using the crossing lemma. By $V_i(\RR)_{\smooth},$ we will mean points of $V_i(\RR)$ that are smooth in dimension 2. If $\dim_{\RR}(V_i(\RR))<2$, then by Corollary \ref{realComplexDimDontAgreeCor}, $V_i(\RR)\subset (V_i)_{\sing}$, and all incidences on $V_i$ have already been counted.

Select a generic (real) 2--plane $\Pi^\prime\in\grass(2,4;\RR)$. For each index $i$, define
\begin{equation}\label{defnOfAj}
B_i = \{z\in (V_i)_{\smooth}\colon T_z(V_i)\in A_{\Pi^{\prime}} \}.
\end{equation}

For each $\gamma\in\Gamma_{S,i}^{(3)}$, we will define various types of bad points. For $S\in\sphrs_2,$ define
\begin{equation}
 \alpha_{S,i}=\bigcup_{\gamma\in\Gamma_{S,i}^{(3)}}\gamma.
\end{equation}

Define

\begin{align}
&\Xi_{\gamma, \sing}=\gamma\cap (\alpha_{S,i})_{\sing},\label{XiGammaSing}\\
&\Xi_{\gamma, \operatorname{shared}}=\{z\in\gamma\colon z\ \textrm{is an isolated point of}\ \gamma\cap V_{i^\prime}\ \textrm{for some}\ i^\prime\neq i\},\\
&\Xi_{\gamma,\operatorname{dir}}=\{z\in\gamma_{\smooth}\colon T_z(\gamma)\cdot v_1=0\}, \label{XiDirEqn}\\
&\Xi_{\gamma,\operatorname{singPt}}=\gamma_{\smooth}\cap (V_i)_{\sing},\\
&\Xi_{\gamma,\operatorname{vertPt}}=\gamma_{\smooth}\cap B_i.\label{XiVertPtEqn}
\end{align}
In \eqref{XiDirEqn}, $v_1$ is a generic unit vector. By generic, we mean that $v_1$ and $\Pi^\prime$ are generic with respect to the collection $\sphrs_2,$ the points $\pts$, and the polynomials $\{P_j\}$ and $\{Q_j\}$.

Finally, define
\begin{equation}\label{xiBadPtDefn}
\Xi_{\gamma,\operatorname{badPt}}=\eqref{XiGammaSing}\cup\ldots\cup \eqref{XiVertPtEqn}.
\end{equation}

\subsection{Bounding the sets $\eqref{XiGammaSing},\ldots,\eqref{XiVertPtEqn}$}
We first record the following corollary of Lemma \ref{branchesOfACurveLem}
\begin{cor}
For each $S\in\sphrs_2$ and each index $i$,
\begin{align}
&\sum_{z\in\alpha_{S,i}}G_z(\alpha_{S,i})\lesssim D_i^2,\\
&\sum_{\gamma\in\Gamma_{S,i}^{(3)}} \sum_{z\in\gamma_{\sing}}G_z(\gamma)\lesssim D_i^2.\label{boundingSumGammaSingBranches}
\end{align}
\end{cor}
%
\begin{lem}\label{boundingGammaShared}
For each $\gamma\in\Gamma_{S,i}^{(3)}$,
\begin{equation}\label{sharedPtsBd}
|\Xi_{\gamma, \operatorname{shared}}|\lesssim \deg\gamma\sum_i(D_i+E_i).
\end{equation}
\end{lem}

\begin{proof}
If $x\in \Xi_{\gamma, \operatorname{shared}},$ then $x$ is an isolated intersection point of $\gamma\cap\BZ_{\CC}(P_{i^\prime})$ or $\gamma\cap\BZ_{\CC}(Q_{i^\prime})$ for some $i^\prime\neq i$. By B\'ezout's theorem (Proposition \ref{Bezout}), the number of times this can occur is bounded by the RHS of \eqref{sharedPtsBd}.
\end{proof}

\begin{lem}\label{boundingGammaDir}
For each $S\in\sphrs_2$ and each $\gamma\in\Gamma_{S,i}^{(3)},$
\begin{equation}
|\Xi_{\gamma,\operatorname{dir}}|\lesssim\deg(\gamma)^2.
\end{equation}
\end{lem}
\begin{proof}
After a rotation, we can assume that $v_1=(1,0,0,0)$. Let $\gamma^\prime=\overline{\pi(\gamma)}$, where $\pi$ is the projection onto the $(x_1,x_2)$--plane. Note that $\deg\gamma^\prime=\deg\gamma$. Since $v_1$ was chosen generically, $z\in \Xi_{\gamma,\operatorname{dir}}$ if and only if $z\in\gamma^\prime_{\smooth}$ and $T_{\pi(z)}(\pi(\gamma^\prime))\cdot \pi(v_1)=0$. Let $f_{\gamma^\prime}$ be a square-free polynomial such that $Z(f_{\gamma^\prime})=\gamma^\prime$. We have $\deg f_{\gamma^\prime}\leq\deg\gamma^\prime=\deg\gamma$. Then
\begin{equation*}
\{z\in\gamma_{\smooth}\colon T_{\pi(z)}(\pi(\gamma))\cdot \pi(v_1)=0\}\subset \gamma^\prime\cap \BZ_{\CC}( \pi(v_1)\cdot\nabla f_{\gamma^\prime}).
\end{equation*}
The latter set has cardinality $O(\deg(\gamma)^2).$
\end{proof}
\begin{lem}\label{boundOnSingPtGamma}
For each $S\in\sphrs_2$ and each index $i$,
\begin{equation}
\sum_{\gamma\in\Gamma_{S,i}^{(3)}}|\Xi_{\gamma,\operatorname{singPt}}|\lesssim D_iE_i+\sum_{\gamma\in\Gamma_{S,i}^{(3)}}(\deg\gamma)^2.\\
\end{equation}
\end{lem}
\begin{proof}
Factor $V_i$ into irreducible components $W_{i,j}$. Recall that if $\gamma\in\Gamma_{S,i}^{(3)},$ then a generic point $x\in\gamma$ lies in $(V_i)_{\smooth},$ so in particular, $\gamma$ is contained in $W_{i,j}$ for precisely one index $j$. Furthermore, we have $T_x(S^*)\neq T_x(V_i)$ at a generic point $x\in\gamma$. If $x\in \Xi_{\gamma,\operatorname{singPt}}$ has not already been placed in $\Xi_{\gamma,\operatorname{sing}}$, then $x$ is a smooth point of $\gamma$. We can now apply the argument used to bound $|I_5|$ (Section \ref{I5Sec}) to conclude that for each index $j$,

\begin{equation}\label{singPtInWij}
\sum_{\substack{\gamma\in\Gamma_{S,i}^{(3)},\\ \gamma\subset W_j}}|\Xi_{\gamma,\operatorname{singPt}}|\lesssim \deg W_{i,j}+\sum_{\substack{\gamma\in\Gamma_{S,i}^{(3)},\\ \gamma\subset W_j}}(\deg\gamma)^2.\\
\end{equation}
Summing \eqref{singPtInWij} over all irreducible components of $V_i$ yields \eqref{boundOnSingPtGamma}.
\end{proof}
\begin{lem}\label{bdVeryPtsLem}
For each $\gamma\in\Gamma_{S,i}^{(3)}$,
\begin{equation}\label{countingVertPtsEqn}
|\Xi_{\gamma,\operatorname{vertPt}}|\lesssim \deg(\gamma)E_i.\\
\end{equation}
\end{lem}
\begin{proof}
Let $W\subset V_i$ be the (unique) irreducible component of $V_i$ that contains $\gamma$. For each $z_0\in \Xi_{\gamma,\operatorname{vertPt}}$, we can select a small interval $\beta_{z_0}\subset\gamma(\RR)$ that contains $z_0$, so that the intervals $\{\beta_{z_0}\}_{z_0\in \Xi_{\gamma,\operatorname{vertPt}}}$ are disjoint.

By Lemma \ref{changeSignAcrossCurve}, we can assume (after shrinking $\beta_{z_0}$ if necessary) that for each interval $\beta_{z_0}$, we have
\begin{equation}
\Big(\det(T_{z_0^\prime}(W(\RR)),\Pi^\prime)\Big)\Big(\det(T_{z_0^{\prime\prime}}(W(\RR)),\Pi^\prime)\Big)<-\epsilon_1,
\end{equation}
where $z_0^\prime$ and $z_0^{\prime\prime}$ are the two endpoints of the curve $\beta_{z_0}$, and $\Pi^\prime$ is the 2--plane from \eqref{defnOfAj}. Here $\epsilon_1>0$ is some sufficiently small constant, depending on $W,\ \gamma,$ and $\Pi^\prime$.

By Corollary \ref{perturbGaussMapLem}, we have that if we select $\epsilon_2>0$ sufficiently small depending on $\epsilon_1$, then if we let $\tilde\beta_{z_0} = \beta_{z_0}+ \epsilon_2 v$ (here $v$ is the vector from Section \ref{dealingWithBadSetsSec}), and define $\tilde z_0^\prime=z_0^\prime+\epsilon_2v,\ \tilde z_0^{\prime\prime}=z_0^{\prime\prime}+\epsilon_2 v$, then

\begin{equation}
\Big(\det(T_{\tilde z_0^\prime}(W(\RR)),\Pi^\prime)\Big)\Big(\det(T_{\tilde z_0^{\prime\prime}}(W(\RR)),\Pi^\prime)\Big)<0.
\end{equation}

Fix vectors $v_3,v_4$ so that $\Pi^\prime=\langle v_3,v_4\rangle.$ Define the function

\begin{equation*}
\Psi(z)=   \det\left[\begin{array}{c}\nabla P_j\\ \nabla Q_j\\ v_3 \\ v_4\end{array}\right](z).
\end{equation*}

Now, if $\epsilon_2>0$ is selected generically (and still selected sufficiently small, depending on $\epsilon_1$), the curve $\gamma+\epsilon_2 v$ does not lie in $\BZ_{\CC}(\Psi)$. This means that
\begin{equation}
| (\gamma+\epsilon_2v)\cap\BZ_{\CC}(\Psi)|\leq (\deg\gamma)(\deg\Psi)\lesssim(\deg\gamma)E_i.
\end{equation}
On the other hand, Lemma \ref{intermediateValueThmLem} implies that at least one intersection point of $(\gamma+\epsilon_2v)\cap\BZ_{\CC}(\Psi)$ must occur inside every interval of the form $\tilde\beta_{z_0},\ z_0\in \Xi_{\gamma,\operatorname{vertPt}}$. This gives us the bound \eqref{countingVertPtsEqn}.
\end{proof}


Combining the previous lemmas, we obtain the following result:

\begin{prop}\label{cutNumberBddProp}
For each $S\in\sphrs_2$ and index $i,$ we have the bound
\begin{equation}\label{sumBdPtsGamma}
\sum_{\gamma\in\Gamma_{S,i}^{(3)}}|\Xi_{\gamma,\operatorname{badPt}}|\lesssim D_iE_i.
\end{equation}
\end{prop}
\begin{proof}
 First, note that
 \begin{equation*}
 \sum_{\gamma\in\Gamma_{S,i}^{(3)}}\deg\gamma\lesssim D_i.
  \end{equation*}
 Now we combine the bounds from Corollary \ref{boundingSumGammaSingBranches} and Lemmas \ref{boundingGammaShared}, \ref{boundingGammaDir}, \ref{boundOnSingPtGamma}, and \ref{bdVeryPtsLem}, to obtain \eqref{sumBdPtsGamma}.
\end{proof}

Combing the bounds from this section and using \eqref{sumDjEj}, we obtain the following bounds, which we will record as a lemma.

\begin{lem}\label{sumOverEverythingCor}
We have the bounds
\begin{align}
&\sum_i\sum_{S\in\sphrs_2}\sum_{\gamma\in\Gamma_{S,i}^{(3)}}|\Xi_{\gamma,\operatorname{badPt}}|\lesssim m^{\frac{k}{2k-1}}n^{\frac{2k-2}{2k-1}},\label{boundBadPtsEqn}\\
&\sum_i\sum_{S\in\sphrs_2}\sum_{z\in (\alpha_{S,i})_{\sing}}G_z(\alpha)\lesssim m^{\frac{k}{2k-1}}n^{\frac{2k-2}{2k-1}}.\label{boundBadPtsMultEqn}
\end{align}
\end{lem}
\subsection{Cutting the curves in $\Gamma_{S,i}^{(3)}$}
Fix a surface $S\in\sphrs_2$ and an index $i$. For each $\gamma\in\Gamma_{S,i}^{(3)}$, consider the set
\begin{equation}\label{defnOfPiecesGamma}
\begin{split}
\operatorname{Pieces}_{\gamma}= \big\{\beta\subset\RR^4\colon\beta\ \textrm{is a connected}&\ \textrm{component}\\ &\textrm{of}\ \gamma(\RR)\backslash \Xi_{\gamma,\operatorname{badPt}}\big\}.
\end{split}
\end{equation}
\begin{lem}
If $\beta\in\operatorname{Pieces}_{\gamma},$ then $\beta$ is a point or a simple open curve (homeomorphic to $(0,1)$).
\end{lem}
\begin{proof}
Suppose $\beta\in\operatorname{Pieces}_{\gamma}$ is not a point. Since $\beta$ does not contain any singular points, $\beta$ is a smooth one-dimensional manifold. Thus $\beta$ is either a simple open curve or is homeomorphic to a circle. However, if $\beta$ is homeomorphic to a circle, then it must contain a point $z\in\beta$ where $T_z\cdot v_1=0$, where $v_1$ is the vector from \eqref{XiDirEqn}. Since we removed all points of this form, no curve $\beta\in\operatorname{Pieces}_{\gamma}$ may be homeomorphic to a circle.
\end{proof}

By Corollary \ref{surfacesEntering4DCellCor},
\begin{equation}\label{numberComponentsCount}
\begin{split}
\sum_{S\in\sphrs_2}\sum_{i}\sum_{\gamma\in\Gamma_{S,i}^{(3)}}b_0(\gamma(\RR))&\lesssim n \sum_i D_i^2\\
&\lesssim m^{\frac{k}{2k-1}}n^{\frac{2k-2}{2k-1}},
\end{split}
\end{equation}
where $b_0(\gamma(\RR))$ is the number of Euclidean connected components of $\gamma(\RR)$. We need to bound the size of $\operatorname{Pieces}_{\gamma}$. By Lemma \ref{removePtIncreaseConnectedComponents}, each time we remove a point $z\in \Xi_{\gamma, \sing}$, we increase the number of connected components by at most $2G_z(\gamma)$. Each time we remove a point, we are left with a (new) semialgebraic set (indeed, this is just the previous semialgebraic set with one point removed). Thus we can apply the lemma iteratively, removing one point from $\Xi_{\gamma, \sing}$ at a time. By \eqref{boundingSumGammaSingBranches}, removing all the points $\Xi_{\gamma, \sing}$ increases the number of connected components in \eqref{numberComponentsCount} by at most $O\big(m^{\frac{k}{2k-1}}n^{\frac{2k-2}{2k-1}}\big)$.

If we remove a point $z\notin \Xi_{\gamma, \sing}$ from a curve $\gamma(\RR)$, we increase the number of connected components by at most one. Thus if we remove all the points from $\Xi_{\gamma,\operatorname{badPt}}\backslash \Xi_{\gamma, \sing}$ as $\gamma$ ranges over all curves in $\bigcup_{S\in\sphrs_2}\bigcup_i \Gamma_{S,i}^{(3)}$, we increase the number of connected components in \eqref{numberComponentsCount} by at most $O\big(m^{\frac{k}{2k-1}}n^{\frac{2k-2}{2k-1}}\big)$.

We conclude that
\begin{equation}
\sum_{S\in\sphrs_2}\sum_j \sum_{\gamma\in\Gamma_{S,j}}|\operatorname{Pieces}_{\gamma}| \lesssim m^{\frac{k}{2k-1}}n^{\frac{2k-2}{2k-1}}.
\end{equation}
\subsection{Cutting the surfaces $V_i(\RR)$}
For each index $i$, let
\begin{equation}\label{defnOfCalY}
\begin{split}
\mathcal{Y}_i=\big\{A\subset \RR^4\colon A\ \textrm{is a} &\ \textrm{connected component} \\ & \textrm{of}\ V_i(\RR)\backslash\big((V_i)_{\sing}\cup B_i \cup \zeta_i\big) \big\},
\end{split}
\end{equation}
where $B_i$ is the set from \eqref{defnOfAj}, and $\zeta_i$ is the set from \eqref{defnZetaI}.

\begin{lem}
The sets $A\in \mathcal{Y}_i$ are two-dimensional smooth manifolds.
\end{lem}
\begin{proof}
By Corollary \ref{realComplexDimDontAgreeCor}, if $z\in V_i(\RR)\backslash (V_i)_{\sing},$ then $\dim_{\RR,z}(V_i(\RR))=2$. Since $z\in (V_i)_{\smooth}$, this also implies that $z$ is a smooth point of $V_i(\RR)$ (in dimension 2). Thus $V_i(\RR)\backslash(V_i)_{\sing}$ is a two-dimensional smooth manifold. Since $B_i(\RR) \cup \zeta_i(\RR)$ are algebraic curves (possibly with 0 dimensional components), $V_i(\RR)\backslash\big((V_i)_{\sing}\cup B_i \cup \zeta_i\big)$ is also a two-dimensional smooth manifold, and thus so are its connected components.
\end{proof}

\begin{lem}
Let $A\in \mathcal{Y}_i$, and let $\pi\colon\RR^4\to\RR^2$ be the projection in the direction $\Pi^\prime$ (i.e., the direction that maps $\Pi^\prime$ to the vector space 0). Here $\Pi^\prime$ is the (real) 2--plane from \eqref{defnOfAj}. Then the restriction of $\pi$ to $A$ is a diffeomorphism, and $\pi(A)$ is an open subset of $\RR^2$.
\end{lem}
\begin{proof}
The main thing to show is that $\pi$ is injective. Suppose there exists two points $x,x^\prime\in A$ such that $\pi(x)=\pi(x^\prime)$. Let $\eta\subset A$ be a smooth curve connecting $x$ and $x^\prime$, and let $\eta(t)$ be the parametrization of this curve by arclength, normalized so that $\eta(0)=x$ and $\eta(1)=x^\prime$. For each $t\in[0,1],$ let $r(t) = \operatorname{dist}(\eta(t),x+\Pi^\prime)^2$, where $x+\Pi^\prime$ is the affine 2--plane obtained by translating $\Pi$ by the (vector) $x\in\RR^4$, and $\operatorname{dist}(\eta(t),x+\Pi^\prime)$ is the (Euclidean) distance between the point $\eta(t)$ and the set $x+\Pi^\prime$.

$r(t)$ is smooth and $r(0)=r(1)=0$. Thus there exists some $t_0\in(0,1)$ so that $\frac{d}{dt}r(t)|_{t=t_0}=0$. This implies that the curve $\eta$ has tangent vector $w\in \Pi^\prime$ at the point $r(t_0)$. However, at every point $z\in A$ we have that $T_z(A)\cap\Pi^\prime = 0$. This is a contradiction. Thus $\pi$ is injective.

We can see that the map $\pi$ is a local diffeomorphism whenever the Jacobian matrix of $\pi$ has full rank. However, this occurs precisely at points $x\in A$ with $T_z(A)\cap\Pi^\prime = 0$. By the definition of $A$, this happens at every point.

Since $\pi$ is injective and is everywhere a local diffeomorphism, we conclude that $\pi$ is a diffeomorphism.
\end{proof}
\subsection{Combining $\mathcal{Y}_i$ and $\operatorname{Pieces}_{\gamma}$.}

\begin{lem}
If $\gamma\in \Gamma_{S,i}^{(3)}$, and if $\beta\in \operatorname{Pieces}_{\gamma},$ then $\beta$ is entirely contained in a single set $A\subset\mathcal{Y}_i$, and this set $A$ is unique.
\end{lem}
\begin{proof}
Recall that every set $\beta\in \operatorname{Pieces}_{\gamma}$ is connected (in the Euclidean topology), and each set $\beta$ is contained in some set $V_i$. Thus if $\beta$ meets two sets $A,A^\prime\in\mathcal{Y}_i$, then by \eqref{defnOfCalY}, $\beta$ must intersect a point from $(V_i)_{\sing}\cup B_i \cup \zeta_i$. However, every point from $\beta\cap (V_i)_{\sing}\cup B_i \cup \zeta_i$ also lies in $\Xi_{\gamma,\operatorname{badPt}}$, (where $\gamma$ is the algebraic curve associated to $\beta$). By definition, $\beta$ contains no points from this set.
\end{proof}

\begin{defn}\label{shrinkDef}
If $\beta\in \operatorname{Pieces}_{\gamma}$, define $\operatorname{shrink}(\beta)$ to be the curve obtained by \emph{shrinking} $\beta$ by a small amount. More precisely, since $\beta$ is a simple open curve, there is a homeomorphism $\eta\colon(0,1)\to\beta$. Define $\operatorname{shrink}(\beta)=\iota((\epsilon,1-\epsilon))$, where $\epsilon>0$ is a very small quantity. Specifically, we choose $\epsilon$ so that the following two properties hold:
\begin{itemize}
\item If $p\in\pts$ and $p$ is an interior point of $\beta$, then $p$ is an interior point of $\operatorname{shrink}(\beta)$.
\item If two curves $\beta,\beta^\prime$ are disjoint, then $\operatorname{shrink}(\beta)$ and $\operatorname{shrink}(\beta^\prime)$ have disjoint (Euclidean) closures.
\end{itemize}
\end{defn}

For each $i=1,\ldots,\ell$, and for each $A\in \mathcal{Y}_i$, define
\begin{equation}
\mathcal L_A = \bigcup_{S\in\sphrs_2}\bigcup_{\gamma\in\Gamma_{S,i}^{(3)}}\{\operatorname{shrink}(\beta)\colon \beta\in \operatorname{Pieces}_{\gamma},\ \textrm{and}\ \beta\subset A\},
\end{equation}

and define

\begin{equation}
\pts_A = \pts_i \cap A,
\end{equation}
where $V_i$ is the (unique) variety such that $V_i(\RR)$ contains $A$.

The sets $\{\pts_A\}$ are disjoint as $A$ ranges over the sets in $\mathcal{Y}_i$ and as $i=1,\ldots,\ell$. Furthermore, if $(p,S)\in I_7\backslash I_8$, then $p$ lies in some set $\pts_A$.

To bound the number of incidences in $I_7\backslash I_8$, we will need to use the crossing lemma. If $A\in\mathcal{Y}_i$, define
\begin{equation*}
\operatorname{crossings}(A)=\sum_{\substack{\beta,\beta^\prime\in\mathcal{L}_A\\ \beta\neq\beta^\prime}}|\beta\cap\beta^\prime|.
\end{equation*}
\begin{lem}[Bounding the number of crossings]\label{boundingCrossingNumLem}
\begin{equation}
\sum_{i=1}^\ell\sum_{A\in\mathcal{Y}_j}\operatorname{crossings}(A) \leq C_0n^2,
\end{equation}
where $C_0$ is the constant from Property \ref{noCommonComponents} of Definition \ref{goodSurfacesDefn}.
\end{lem}
\begin{proof}
First, note that $\sum_{S\neq S^\prime}|S\cap S^\prime|\leq C_0n^2$. The entire point is to show that if $z\in S\cap S^\prime$, then there is at most one pair $\beta,\beta^\prime$ with $\beta\subset S,\beta^\prime\subset S^\prime$ so that $z\in\beta\cap \beta^\prime$. We also need to observe that if both $\beta$ and $\beta^\prime$ are contained in the same surface $S$, then $\beta\cap\beta^\prime=\emptyset$. This is because any point $z\in\beta\cap\beta^\prime$ is a singular point of $S^*\cap V_i$ for some index $i$, so this point lies in $\Xi_{\gamma,\operatorname{badPt}}\cap\Xi_{\gamma^\prime,\operatorname{badPt}}$, where $\gamma,\gamma^\prime$ are the (not necessarily distinct) curves associated to $\beta$ and $\beta^\prime$, respectively. Thus points of this form were removed at a previous step.

For contradiction, suppose there existed some indices $i_1,i_2$, some $A_1\in\mathcal{Y}_{i_1},\ A_2\in\mathcal{Y}_{i_2}$, and some curve segments $\beta_1,\beta_1^\prime\in \lines_{A_1},\beta_2,\beta_2^\prime\in\lines_{A_2}$ so that $\beta_1,\beta_2\subset S$, $\beta_1^\prime,\beta_2^\prime\subset S^\prime$, and $(\beta_1\cap\beta_1^\prime) \cap (\beta_2\cap\beta_2^\prime)\neq\emptyset$.

First, we must have $i_1\neq i_2$. Indeed, if $i_1=i_2=i$, then $\beta_1\cap\beta_2$ is a singular point of $S^*\cap V_i,$ and by \eqref{defnOfPiecesGamma}, neither $\beta_1$ nor $\beta_2$ can contain any points of this type. Next, we can assume that $\beta\not\subset \big(\bigcup_{i}(V_i)_{\sing}\big),$ since all irreducible components of $S^*\cap \bigcup_i V_i$ that were contained in $\big(\bigcup_{i}(V_i)_{\sing}\big)$ were already removed. In particular, since $i_1\neq i_2$, we must have that $\beta_1\cap V_{i_2}$ is a discrete set, where $\gamma_1$ is the curve associated to $\beta_1$. But every point in this intersection was already removed when we removed the set $\Xi_{\gamma, \operatorname{shared}}$. Thus no points of this type may exist in any curve segment $\beta$.
\end{proof}

\begin{lem}
Fix an $A\in\mathcal{Y}_i$, and let $p_1,\ldots,p_k\in A$. Then at most $C_0$ curves $\beta\in\mathcal{L}_A$ can contain the points $p_1,\ldots,p_k$, where $C_0$ is the quantity from the statement of Theorem \ref{surfacesInR4Thm}.
\end{lem}
\begin{proof}
First, if two curves $\beta,\beta^\prime\in\mathcal{L}_A$ both contain $p_1$, then  $\beta,\beta^\prime$ must come from distinct surfaces $S,S^\prime$. Otherwise $p_1$ would lie in the sets $\Xi_{\gamma,\operatorname{sing}}, \Xi_{\gamma^\prime,\operatorname{sing}}$ (where $\gamma,\gamma^\prime$ are the curves associated to $\beta,\beta^\prime$, respectively). However, $\beta$ cannot contain any point from $\Xi_{\gamma,\operatorname{sing}}$, and similarly for $\beta^\prime$.

Since every curve that contains the points $p_1,\ldots,p_k$ must come from a distinct surface $S\in\sphrs_2$, by Property \ref{tangentSpacesTransverse} from Definition \ref{goodSurfacesDefn}, at most $C_0$ curves can contain the points $p_1,\ldots,p_k$.
\end{proof}

We must now develop the tools needed to apply the crossing lemma to the collections of curves and points on the open regions $A\in\mathcal{Y}_i$.

%
%
%
%
\section{The final interlude: Some graph theory}\label{graphTheorySection}
In \cite{Szekely}, Sz\'ekely provided a simple proof of the Szemer\'edi-Trotter theorem using the crossing lemma from topological graph theory. In brief, the crossing lemma states that a graph drawing either contains very few edges, or it must have many crossings (points where two edges cross). Sz\'ekely showed how a point-line arrangement could be converted into a graph drawing, where the number of point-line incidences was bounded by the number of edges. On the other hand, since every two lines cross at most once, Sz\'ekely was able to bound the number of crossings in the graph drawing. This led to a bound on the number of incidences.

We wish to do something similar, but in our case we do not have a single graph but many, and the crossings are spread out amongst all of the graphs. We need to obtain an incidence bound across all of these graphs. This will be done in Lemma \ref{planarIncidenceLem}.

\subsection{Graphs and graph drawings}
\begin{defn}
We define a \emph{generalized undirected graph drawing} to be a triple $H=(\pts,\Gamma,E)$. Here $\pts\subset\RR^2$ is a finite collection of points (also called vertices); $\Gamma$ is a finite set of bounded simple open curves, with $|\gamma\cap\gamma^\prime|$ finite for every pair of distinct curves $\gamma,\gamma^\prime$; and $E$ is a set of pairs of the form $(\{p,q\},\gamma)$, where $p,q$ are distinct points in $\pts$ and $\gamma\in\Gamma$. If $p,q$ are vertices of a graph drawing $H$, we define
\begin{equation*}
\edgemult(p,q)=|\{\gamma\in\Gamma\colon (\{p,q\},\gamma)\in E\}|.
\end{equation*}
Informally, this is the number of edges between $p$ and $q$.
\end{defn}

\begin{defn}
We say that the undirected drawing $H$ is \emph{proper} if the following properties hold
\begin{itemize}
\item No point of $\pts$ lies in the relative interior of any curve in $\Gamma$.
\item $(\{p,q\},\gamma)\in E$ if and only if the endpoints of $\gamma$ are the points $p$ and $q$.
\end{itemize}
Thus a proper undirected graph drawing is a special type of generalized undirected graph drawing.
\end{defn}
\begin{defn}
Let $G=(V^\prime,E^\prime)$ be an undirected multigraph. Thus $V^\prime$ is a set of vertices and $E^\prime$ is a multiset of pairs of distinct vertices from $V^\prime$. Let $H=(\pts,\Gamma,E)$ be a (generalized) undirected graph drawing. We say that $G$ is associated to $H$ (or $H$ is associated to $G$) if there is a bijection from $\pts$ to $V^\prime$ so that for every pair of vertices $p,q\in \pts$, $\edgemult(p,q)$ is equal to the number of edges between $p$ and $q$ in $G$. Given a graph drawing $H$, there is always a unique multigraph $G$ associated to $H$.
\end{defn}

\begin{defn}
We define a \emph{generalized directed graph drawing} to be a triple $H=(\pts,\Gamma, E).$ Here $\pts\subset\RR^2$ is a finite collection of points (also called vertices); $\Gamma$ is a finite set of bounded simple open curves, with $|\gamma\cap\gamma^\prime|$ finite for every pair of distinct curves $\gamma,\gamma^\prime$; and $E$ is a set of pairs of the form $(p,q,\gamma)$, where $p,q$ are distinct points in $\pts$ and $\gamma\in\Gamma$. If a triple $(p,q,\gamma)$ is in $E$, we say that $p\stackrel{\gamma}{\to} q$, i.e.~there is a directed edge from $p$ to $q$ along $\gamma$ (note that $p$ and $q$ need not be the endpoints of $\gamma$). The collection of all directed edges from $p$ to $q$ is denoted $p\to q$, and the number of edges is denoted by $\edgemult(p\to q)$.
\end{defn}

\begin{defn}\label{defnOfProperGraphDrawing}
We say that a directed graph drawing is \emph{proper} if the following properties hold:
\begin{itemize}
\item No point of $\pts$ lies in the relative interior of any curve in $\Gamma$.
\item If $p\stackrel{\gamma}{\to} q$ then $p$ and $q$ are the endpoints of $\gamma$. Conversely, if $p,q$ are the endpoints of $\gamma$, then precisely one of $p\stackrel{\gamma}{\to} q$ or $q\stackrel{\gamma}{\to} p$ must hold.
\end{itemize}
The intuition is that generalized graph drawings are allowed to have multiple edges stacked on top of each other, while proper graph drawings do not permit this.
\end{defn}

\subsection{Crossings and graph drawings}
\begin{defn}
If $H$ is a generalized (directed or undirected) graph drawing, we define the \emph{number of crossings} in $H$,
\begin{equation*}
\crossing(H)=\sum_{\substack{\gamma,\gamma^\prime\in\Gamma\\ \gamma\neq\gamma^\prime}}|\gamma\cap\gamma^\prime|.
\end{equation*}
Since the intersection of any two curves is a discrete set, $\crossing(H)$ is finite.
\end{defn}

Let $G$ be an undirected multigraph. We define $\vertices(G)$ to be the number of vertices of $G$ and $\edges(G)$ to be the number of edges. 

\begin{thm}[Ajtai, Chvatal, Newborn, Szemer\'edi \cite{Ajtai}; Leighton \cite{Leighton};  Sz\'ek\-ely \cite{Szekely}] \label{crossingNumberIneq}
Let $H$ be a proper undirected graph drawing and let $G$ be the multigraph associated to $H$. Suppose $G$ has maximum edge multiplicity $M$. If $\edges(G)\geq5\vertices(G)$, then
 \begin{equation}\label{crossingNumberIneqEqn}
 \crossing(H)\geq\frac{\edges(G)^3}{100M \vertices(G)^2}.
\end{equation}
 \end{thm}

\subsection{Bounding incidences by crossings}
\begin{thm}\label{planarIncidenceLem}
Let $U\subset\RR^2$ be open. Let $\pts\subset\RR^2$ be a set of points, and let $\Gamma\subset\RR^2$ be a set of simple open curves with $k$ degrees of freedom (relative to $\pts$), i.e. for any $k$ points of $\pts$, there are at most $C_0$ curves from $\Gamma$ that contain all $k$ points, and any two curves intersect in at most $C_0$ points. Then
\begin{equation}
\incidences(\pts,\Gamma) \lesssim |\pts|^{\frac{k}{2k-1}} \Big(\sum_{\substack{\gamma,\gamma^\prime\in\Gamma\\\gamma\neq\gamma^\prime}} |\gamma\cap\gamma^\prime| \Big)^{\frac{k-1}{2k-1}} + |\pts| + |\Gamma|.
\end{equation}
The implicit constant depends only on $k$ and $C_0$.
\end{thm}

The proof of Theorem \ref{planarIncidenceLem} is much easier for the $k=2$ case (it is a variant of Sz\'ekely's proof in \cite{Szekely}), so will provide a proof of this case first. The proof for general $k$ also works for $k=2$.
\begin{proof}[Proof of Lemma \ref{planarIncidenceLem}, $k=2$ case]
Replace each curve $\gamma\in \Gamma$ with a slightly shrunk curve $\gamma^\prime$ (in the sense of Definition \ref{shrinkDef}), so that $\bdry(\gamma^\prime)$ does not meet any point from $\pts$ nor any curve from $\Gamma$. If $\Gamma^\prime$ denotes the set of shrunk curves, then $|\incidences(\pts,\Gamma^\prime)|\geq |\incidences(\pts,\Gamma)|-2|\Gamma|.$ Delete from $\Gamma^\prime$ those curves that are incident to fewer than 2 points from $\pts$, and denote the resulting set of curves $\Gamma^{\prime\prime}$. Then  $|\incidences(\pts,\Gamma^{\prime\prime})|\geq |\incidences(\pts,\Gamma)|-4|\Gamma|.$

Let $G$ be the undirected multigraph whose vertex set is $\pts$, and where two vertices are connected by an edge if the two corresponding points are joined by a curve from $\Gamma^{\prime\prime}$, and the two vertices are consecutive on this curve. Let $H$ be the (proper, undirected) drawing of $G$ given by the points $\pts\subset\RR^2$ and the curve segments joining consecutive edges from curves $\gamma\in\Gamma^{\prime\prime}$.

The multigraph $G$ need not be a graph, since two vertices can be connected by several edges. However, the maximum edge multiplicity of $G$ is bounded by the constant $C_0$ from the statement of Lemma \ref{planarIncidenceLem}. Furthermore, $\edges(G)\geq\frac{1}{2}|\incidences(\pts,\Gamma^{\prime\prime})|$, so by Theorem \ref{crossingNumberIneq},
\begin{equation*}
\begin{split}
|\incidences(\pts,\Gamma)|&\lesssim |\pts|^{2/3}\mathcal{C}(H)^{1/3}+|\pts|+ 4|\Gamma|\\
&\lesssim |\pts|^{2/3} \Big(\sum_{\substack{\gamma,\gamma^\prime\in\Gamma\\\gamma\neq\gamma^\prime}} |\gamma\cap\gamma^\prime| \Big)^{1/3} + |\pts| + |\Gamma|.\qedhere
\end{split}
\end{equation*}
\end{proof}
We will now prove Lemma \ref{planarIncidenceLem} for general $k$. The proof is very similar to Pach and Sharir's proof in \cite{Pach} of a Szemer\'edi-Trotter type theorem for curves with $k$ degrees of freedom. However, the main term in Pach and Sharir's bound is $|\pts|^{\frac{k}{2k-1}}|\Gamma|^{\frac{2k-2}{2k-1}}$ rather than
\begin{equation*}
|\pts|^{\frac{k}{2k-1}}\Big(\sum|\gamma\cap\gamma^\prime|\Big)^{\frac{k-1}{2k-1}},
\end{equation*}
and the former could potentially be much larger. This fact forces us to modify Pach and Sharir's proof.

\begin{proof}[Proof of Lemma \ref{planarIncidenceLem}, general case]

First, either
\begin{equation}\label{GammaControlsNumIncidences}
\incidences(\pts,\Gamma)\leq 100k |\Gamma|,
\end{equation}
or
\begin{equation}\label{incidencesBiggerN}
\incidences(\pts,\Gamma)>100k |\Gamma|.
\end{equation}
If \eqref{GammaControlsNumIncidences} holds, then the theorem follows immediately. Thus for the remainder of the proof we will assume that \eqref{incidencesBiggerN} holds

Let $\Gamma^\prime\subset\Gamma$ be the set of curves that are incident to $\geq 2k$ points from $\pts$. By \eqref{incidencesBiggerN},
\begin{equation*}
\incidences(\pts,\Gamma^\prime)>\frac{1}{2}\incidences(\pts,\Gamma),
\end{equation*}
so it suffices to consider curves in $\Gamma^\prime$. If $p\in \pts,$ let $d_p=|\{\gamma\in \Gamma^\prime\colon p\in\gamma\}|.$ We will call this the \emph{degree} of $p$. Let
\begin{equation*}
\pts^\prime=\Big\{p\in \pts\colon d_p\geq \frac{\mathcal{I}(\pts,\Gamma^\prime)}{2|\pts|}\Big\}.
\end{equation*}
Then
\begin{equation}
\incidences(\pts^\prime,\Gamma^\prime)\geq\frac{1}{2}\incidences(\pts,\Gamma^\prime)\geq\frac{1}{4}\incidences(\pts,\Gamma).
\end{equation}
For $p\in \pts^\prime,$ $\gamma\in \Gamma^\prime,$ and $p\in\gamma$, let
\begin{equation*}
S_{p,\gamma}=\{q\in \pts^\prime\colon q\in\gamma,\ q\neq p,\ d_q\geq d_p\}.
\end{equation*}

Let $H=(\pts^\prime,\Gamma^\prime,E)$ be a generalized directed graph drawing, where the triple $(p,q,\gamma)$ is in $E$ if the following conditions hold:
\begin{itemize}
\item $p\in\pts^\prime,\gamma\in\Gamma^\prime,\ p\in\gamma,\ q\in S_{p,\gamma}.$
\item $|S_{p,\gamma}|\geq k.$
\item $q$ is one of the $k$ closest points to $p$ of the point set $S_{p,\gamma}$ (i.e.~the curve segment of $\gamma$ connecting $p$ and $q$ passes through at most $k-1$ points from $S_{p,\gamma}$).
\end{itemize}

Note that $H$ might not be a proper directed graph drawing since several edges may be drawn over the same curve segment.

\begin{lem}\label{85Lem}
\begin{equation}\label{edgesCompIncidencesMinusLines}
\edges(H)\geq \incidences(\pts^\prime,\Gamma^\prime)-k|\Gamma^\prime|.
\end{equation}
\end{lem}
\begin{proof}
Let $p\in\pts^\prime,\gamma\in\Gamma^\prime$ with $p\in\gamma$. Then either there is an edge $p\overset{\gamma}{\to} q$ for some $q\in S_{p,\gamma}$, or $p\in X_\gamma,$ where $X_\gamma$ is the set of the $k$ highest degree points on $\gamma$. The lemma now follows from the observation that $|X_\gamma|\leq k$.
\end{proof}
Lemma \ref{85Lem} and \eqref{incidencesBiggerN} imply that
\begin{equation}\label{edgesHSimIncidences}
\edges(H)\geq \frac{1}{2}\incidences(\pts^\prime,\Gamma^\prime)\geq\frac{1}{8}\incidences(\pts,\Gamma).
\end{equation}

Note that a given segment of a curve $\gamma\in \Gamma$ may be part of several distinct edges, i.e.~our graph drawing $H$ may not be proper (in the sense of Definition \ref{defnOfProperGraphDrawing}). However, the following lemma controls the extent to which this occurs.
\begin{lem}\label{notTooMuchStacking}
Let $\gamma\in \Gamma^\prime,$ and let $x$ be a point on $\gamma$. Then the number of pairs $(p,q)\in(\pts^\prime)^2$ such that the arc $p\stackrel{\gamma}{\to} q$ contains $x$ is at most $10k^2$.
\end{lem}
\begin{proof}
Since $\gamma$ is a simple open curve, $\gamma\backslash x$ consists of two connected pieces, which we will call the right and left pieces. This establishes a global notion of right and left on the curve $\gamma$. We will now prove the lemma. Suppose there were more than $10k^2$ pairs $(p,q)\in(\pts^\prime)^2$ with $x$ contained in the arc $p\stackrel{\gamma}{\to} q$. Then without loss of generality, there are more than $5k^2$ arcs of the form $p \overset{\gamma}{\to}q$ where $p$ is right of $x$ and $q$ is left of $x$. Since each point $p\in\pts^\prime \cap\gamma$ has at most $k$ curves of the form $p \overset{\gamma}{\to}q$ that exit it, there exists a set of $\geq 5k$ distinct points to the right of $x$, so that each of these points contains at least one arc of the form $p \overset{\gamma}{\to}q$ that contains $x$. Denote this set of points by $\pts_1$. Let $\pts_2\subset \pts_1$ be the $2k$ right-most points from this collection, and let $p^*\in \pts_2$ be the point with lowest degree. Then the arc from $p^*$ to
$x$ passes over at least $3k$ points of $\pts^\prime$, but there are at least $2k-1>k$ points of $\pts^\prime$ on the arc $\gamma$ with distance $\leq 2k$. Each of these points lies in $S_{p^*,\gamma}$. This is a contradiction, since by definition $p^*$ is connected to the $k$ \emph{closest} points on $\gamma$ with degree $\geq d_{p^*}$.
\end{proof}

Let $H^\prime$ be the generalized directed graph drawing obtained by starting with $H$ and deleting all edges of the form $p\overset{\gamma}{\to}q$ where $p\in X_\gamma$ (recall from above that $X_\gamma$ is the set of $k$ points on $\gamma$ that have the highest multiplicity). Then since every curve in $\Gamma^\prime$ is incident to at least $2k$ edges, $\mathcal{E}(H^\prime)\geq \frac{1}{2}\mathcal{E}(H)\geq\frac{1}{16}\incidences(\pts,\Gamma)$.

Now, let $H^{\prime\prime}$ be the generalized directed graph drawing obtained by starting with $H^\prime$ and deleting all edges of the form $p\overset{\gamma}{\to}q$ whenever
\begin{equation}\label{edgeMultThreshold}
\edgemult(p\to q)>Ad_p^{\frac{k-2}{k-1}}.
\end{equation}
Here $A$ is a large constant (depending only on $k$) to be determined later. We will call $H^{\prime\prime}$ the \emph{pruned} version of $H^\prime$. If an edge $p\overset{\gamma}{\to}q$ is present in $H^\prime$ but not in $H^{\prime\prime}$, we will say the edge $p\overset{\gamma}{\to}q$ has been \emph{pruned}.
\begin{lem}[Pach-Sharir]\label{edgesHandHPrimeComparable}
\begin{equation}\label{edgesHandHPrimeComparableEqn}
\edges(H^{\prime\prime})\geq \frac{1}{2k}\edges(H^\prime)\geq\frac{1}{32k}\incidences(\pts,\Gamma).
\end{equation}
\end{lem}
\begin{proof}
The proof of this lemma is nearly identical to the arguments of Pach and Sharir in \cite[p124]{Pach}. For the reader's convenience, we reproduce it here. For $p,q\in \pts^\prime,$ let $E_p(q)$ be the set of all edges of $H^\prime$ that connect $p$ to $q$, i.e.~all edges of the form $p\overset{\gamma}{\to}q,$ for $\gamma\in \Gamma^\prime$. By the definition of $H$, we have $\sum_{q\in\pts^\prime}|E_p(q)|\leq 2(k-1)d_p$.

Let $E_{p,q}$ be the set of edges of the form $p\overset{\gamma}{\to}r$, where $\gamma$ is a curve for which $p\overset{\gamma}{\to}q$ is an edge of $H^\prime$.

Let
\begin{equation*}
R_p = \{q \in \pts^\prime \colon |E_p(q)|>Ad_p^{\frac{k-2}{k-1}}\},
\end{equation*}
so
\begin{equation*}
|R_p|\leq 2kd_p\Big(Ad_p^{\frac{k-2}{k-1}}\Big)^{-1}\leq 2kA^{-1}d_p^{\frac{1}{k-1}}.
\end{equation*}
If $R_p=\emptyset,$ there is nothing to prove. Otherwise, consider in turn each vertex $q\in R_p$ and each curve $\gamma$ that contains an edge $p\overset{\gamma}{\to}q$ from $E_p(q).$ By the definition of $H^{\prime}$, $\gamma$ must contain at least $k-1$ edges that lie in the set $E_{p,q}.$ We want to \emph{charge} $p\overset{\gamma}{\to}q$ to one of these edges; we can do this as long as one of these edges is still present in the set $H^{\prime\prime}$ (i.e.~we can do this as long as one of these edges has not been pruned).

We say that $p\overset{\gamma}{\to}q$ is \emph{good} if there exists at least one edge from $E_{p,q}$ in the generalized directed graph drawing $H^{\prime\prime}$ (i.e.~if at least one edge from $E_{p,q}$ survives the pruning process). If $p\overset{\gamma}{\to}q$ is not good, we say it is \emph{bad}.

If $p\overset{\gamma}{\to}q$ is bad, then the curve $\gamma$ passes through $p$ and through at least $k-1$ distinct points of $R_p$, and in this case $\gamma$ contains at most $2(k-1)$ bad edges. But, there are $\leq C_0$ curves passing through $p$ and any fixed set of $k-1$ points of $R_p$. Thus the number of bad edges is at most
\begin{equation}\label{boundOnNumEdges}
\begin{split}
2(k-1)C_0\binom{|R_p|}{k-1} & < \frac{2C_0(k-1)|R_p|^{k-1}}{(k-1)!}\\
& < \frac{2C_0}{(k-1)!}\big(\frac{2k}{A}\big)^{k-1}d_p.
\end{split}
\end{equation}
If we select $A$ sufficiently large, then
\begin{equation*}
\frac{2C_0}{(k-1)!}\big(\frac{2k}{A}\big)^{k-1}d_p < \frac{1}{2}(k-1)d_p,
\end{equation*}
and thus more than half of the edges in $E_p$ are good, and each of them can charge one of the surviving edges in $H^{\prime\prime}$. This implies that at least $\frac{1}{2k}d_p$ of the edges exiting $p$ survive in $H^{\prime\prime}$. Since this holds true for all edges in $H^{\prime\prime},$ Lemma \ref{edgesHandHPrimeComparable} follows.
\end{proof}
For each triple $(p,q,\gamma)\in E$ in the pruned graph drawing $H^{\prime\prime}$, let $\gamma_{p,q}\subset\gamma$ be the simple open curve connecting $p$ to $q$. Define $\Gamma_0=\{\gamma_{p,q}\colon (p,q,\gamma)\in E\}$. Let $\Gamma_1$ be obtained by perturbing each curve in $\gamma_0$ slightly so that the endpoints remain unchanged, but every two curves in $\Gamma_0$ intersect in a finite set. Let $H^{\prime\prime\prime}=(\pts^\prime, \Gamma_1,E_0)$ be the directed graph drawing where $(p,q,\gamma)\in E_0$ if and only if $p$ and $q$ are the endpoints of $\gamma$. Then $H^{\prime\prime\prime}$ is a proper directed graph drawing (in the sense of Definition \ref{defnOfProperGraphDrawing}). For every pair of distinct points $p,q\in\pts^\prime$, $\edgemult(p\to q)$ in $H^{\prime\prime}$ is equal to $\edgemult(p\to q)$ in $H^{\prime\prime\prime}$. Furthermore, by Lemma \ref{notTooMuchStacking},
\begin{equation*}
\cross(H^{\prime\prime\prime})<100k^4\cross(H^{\prime\prime}).
\end{equation*}

We will now perform a diadic decomposition of vertices in the graph $H^{\prime\prime\prime}.$ For $j = 0,\ldots,\lceil \log_2 m\rceil ,$ let $H_j$ be the proper undirected graph drawing with vertex set $\big\{p \in \pts^\prime\colon d_p\geq 2^j\frac{\mathcal{I}(\pts,\Gamma)}{2m}\big\}.$ If $p\in \pts^\prime$ and $2^j\frac{\mathcal{I}(\pts,\Gamma)}{2m}\leq d_p < 2^{j+1}\frac{\mathcal{I}(\pts,\Gamma)}{2m}$, then all of the multi-edges $p\to q$ from $H^{\prime\prime\prime}$ are added $H_j,$ but we add them as undirected edges. These are the only edges of $H_j$. Let $m_j$ be the number of vertices of $H_j$. Since $2^j \frac{\mathcal{I}(\pts,\Gamma)}{2m} m_j \leq \mathcal{I}(\pts,\Gamma)$, we have
\begin{equation}
m_j \leq 2^{-j+1}m.
\end{equation}

We have:
\begin{itemize}
\item Each multi-edge of $H_j$ has edge multiplicity $\leq \big(2^{j+1}\frac{\mathcal{I}(\pts,\Gamma)}{2m}\big)^{\frac{k-2}{k-1}}$.
\item Each multi-edge $p\to q$ in $H^{\prime\prime\prime}$ appears as a multi-edge in some $H_j$, so
\begin{equation}\label{sumJEdgesJ}
\edges(H^{\prime\prime\prime})\leq\sum_j \edges(H_j).
\end{equation}
\item $\cross(H_j)\leq\cross(H^{\prime\prime\prime})\leq100k^4\cross(H).$
\end{itemize}
Let $G_j$ be the undirected multigraph associated to $H_j$. Let
\begin{equation*}
J_1 = \bigg\{j\in\{0,\ldots,\lceil \log_2 m\rceil\}\colon\ \edges(G_j)\leq 100 m_j \Big(2^{j+1}\frac{\mathcal{I}(\pts,\Gamma)}{2m}\Big)^{\frac{k-2}{k-1}}\bigg\},
\end{equation*}
and let $J_2=\{0,\ldots,\lceil \log_2 m\rceil\}\backslash J_1$. By \eqref{edgesHandHPrimeComparableEqn} and \eqref{sumJEdgesJ}, either
\begin{equation}\label{firstOption}
 \incidences(\pts^\prime,\Gamma)\leq\frac{1}{64k}\sum_{j\in J_1}\edges(G_j),
\end{equation}
or
\begin{equation}\label{secondOption}
\incidences(\pts^\prime,\Gamma)\leq\frac{1}{64k}\sum_{j\in J_2}\edges(G_j).
\end{equation}

If \eqref{firstOption} holds, then
\begin{equation*}
\begin{split}
\sum_{j\in J_1}\edges(G_j)&\lesssim m^{\frac{1}{k-1}}\incidences(\pts,\Gamma)^{\frac{k-2}{k-1}}  \sum_{j=0}^{\lceil \log_2 m\rceil} 2^{-j/k}\\
&\lesssim  m^{\frac{1}{k-1}}\incidences(\pts,\Gamma)^{\frac{k-2}{k-1}}
\end{split}
\end{equation*}
(recall that the $\lesssim$ notation hides an implicit constant that is allowed to depend on $k$). Thus if  \eqref{firstOption} holds, then
\begin{equation}\label{boundSumJ1}
|\incidences(\pts,\Gamma)| \lesssim  |\pts|,
\end{equation}
which proves Lemma \ref{planarIncidenceLem}.

Alternately, if \eqref{secondOption} holds, then we can apply the crossing lemma to each $j\in J_2$ to conclude
\begin{equation}
\cross(H_j)\gtrsim \frac{\edges(G_j)^3}{m_j^2 \big(2^{j+1}\frac{\incidences(\pts,\Gamma)}{2m}\big)^{\frac{k-2}{k-1}}},
\end{equation}
and thus
\begin{equation}
\begin{split}
\edges(G_j)&\lesssim  (\cross(H_j))^{1/3}m^{\frac{k}{3(k-1)}}\incidences(\pts,\Gamma)^{\frac{k-2}{3(k-1)}}2^{\frac{-jk}{3(k-1)}}\\
&\lesssim  (\cross(H_j))^{1/3}m^{\frac{k}{3(k-1)}}\incidences(\pts,\Gamma)^{\frac{k-2}{3(k-1)}},
\end{split}
\end{equation}
where the implicit constant does not depend on $j$ (i.e.~it is an absolute constant). Thus we have
\begin{equation}
\begin{split}
\incidences(\pts,\Gamma)&\lesssim \sum_{j\in J_2}\edges(G_j)\\
&\lesssim \sum_{j\in J_2} (\cross(H_j))^{1/3}m^{\frac{k}{3(k-1)}}\incidences(\pts,\Gamma)^{\frac{k-2}{3(k-1)}}\\
&\lesssim (\cross(H))^{1/3}m^{\frac{k}{3(k-1)}}\incidences(\pts,\Gamma)^{\frac{k-2}{3(k-1)}}.
\end{split}
\end{equation}
By Lemma \ref{notTooMuchStacking}, we have
\begin{equation*}
\cross(H)\lesssim \sum_{\substack{\gamma,\gamma^\prime\in\Gamma\\\gamma\neq\gamma^\prime}} |\gamma\cap\gamma^\prime|.
\end{equation*}
Thus if \eqref{secondOption} holds, then
\begin{equation}\label{boundJ2Case}
|\incidences(\pts,\Gamma)| \lesssim|\pts|^{\frac{k}{2k-1}} \Big(\sum_{\substack{\gamma,\gamma^\prime\in\Gamma\\\gamma\neq\gamma^\prime}} |\gamma\cap\gamma^\prime| \Big)^{\frac{k-1}{2k-1}}.
\end{equation}

Combining the bounds \eqref{GammaControlsNumIncidences}, \eqref{boundSumJ1}, and \eqref{boundJ2Case}, we conclude

\begin{equation*}
\incidences(\pts,\Gamma)\lesssim |\pts|^{\frac{k}{2k-1}} \Big(\sum_{\substack{\gamma,\gamma^\prime\in\Gamma\\\gamma\neq\gamma^\prime}} |\gamma\cap\gamma^\prime| \Big)^{\frac{k-1}{2k-1}} + |\pts| + |\Gamma|.\qedhere
\end{equation*}
\end{proof}

%
%
%
%
\section{Bounding $I_6$}\label{boundingI6Sec}
To bound $|I_6\backslash I_7|$, we will apply Theorem \ref{planarIncidenceLem} to each collection $(A,\pts_A,\lines_A)$ for each $A\in \bigcup_i \mathcal{Y}_i$. We conclude that
\begin{equation}\label{incidencesI7MinsI8}
\begin{split}
|I_6\backslash I_7| & \lesssim \sum_{i=1}^\ell \sum_{A\in\mathcal{Y}_i} |\{(p,\beta)\in\pts_A\cap\lines_A\colon p\in\beta\}|\\
&\lesssim \sum_{i=1}^\ell \sum_{A\in\mathcal{Y}_i} \big(|\pts_A|^{\frac{k}{2k-1}}\operatorname{crossings}(A)^{\frac{k-1}{2k-1}}+|\pts_A|+|\lines_A|\Big)\\
&\lesssim \bigg(\sum_{i=1}^\ell \sum_{A\in\mathcal{Y}_i} |\pts_A|\bigg)^{\frac{k}{2k-1}}\bigg(\sum_{i=1}^\ell \sum_{A\in\mathcal{Y}_i} \operatorname{crossings}(A)\bigg)^{\frac{k-1}{2k-1}}\\
&\qquad+|\pts|+\sum_{i=1}^\ell \sum_{A\in\mathcal{Y}_i} |\lines_A|\\
&\lesssim m^{\frac{k}{2k-1}}n^{\frac{2k-2}{2k-1}}+m,
\end{split}
\end{equation}
where on the second-last line we used Lemma \ref{boundingCrossingNumLem}.
Combining \eqref{I8BoundEqn}, \eqref{incidencesI7MinsI8}, and the bounds on $I_1,\ldots,I_6$ from Sections \ref{I1I2I3Sec}, \ref{I4Sec}, and \ref{I5Sec}, we obtain the bound
\begin{equation}
\sum_{i\in\mathcal A_1} |I\cap\mathcal{I}(\pts_i \cap W_i,\sphrs_2)|\lesssim m^{\frac{k}{2k-1}}n^{\frac{2k-2}{2k-1}}+m+n,
\end{equation}
where the implicit constant depends only on $C_0$ and $k$ (indeed, each implicit constant only depended on previously defined implicit constants, and ultimately these only depended on $C_0$ and $k$). This is precisely the second term in \eqref{intermediateIncidenceBd}, which we sought to control. All together, we conclude that
\begin{equation}
|I\cap \incidences(\pts\cap Z,\sphrs_2)| \leq \frac{C_1}{10} \Big(m^{\frac{k}{2k-1}}n^{\frac{2k-2}{2k-1}}+m+n\Big),
\end{equation}
provided we choose $C_1$ sufficiently large depending only on the constants $C_0$ and $k$ from the statement of Theorem \ref{surfacesInR4Thm}. This (at last!) concludes the proof of Theorem \ref{surfacesInR4Thm}.

\section{Open problems and future work}\label{openProblemsSec}
There are a number of natural extensions and generalizations of Theorem \ref{surfacesInR4Thm}.
\subsection{Removing the restriction on $m$ and $n$}\label{removingRestrictionSection}
The requirement that $m\leq n^{\frac{2k+2}{3k}}$ is likely not necessary; we pose the following conjecture.
\begin{conjecture}\label{allMNConjecture}
Theorem \ref{surfacesInR4Thm} holds for all values of $m$ and $n$.
\end{conjecture}
If $m\geq n^2$, then Theorem \ref{surfacesInR4Thm} follows from the K\H{o}vari-S\'os-Tur\'an theorem (Theorem \ref{turanBoundLemma}). Thus the critical range is $n^{\frac{2k+2}{3k}} < m < n^2$. The author believes that using the same techniques as in Section 3.1 of \cite{Sheffer}, it would be possible to obtain the following partial progress towards Conjecture \ref{allMNConjecture}.
\begin{conjecture}\label{allMNConjectureWeak}
Let $\pts\subset\RR^4$ be a collection of $m$ points. Let $\sphrs$ be a $C_0$-good collection of pseudoflats with $k$ degrees of freedom, with $|\sphrs|=n$, and suppose $m\leq n^{2-\epsilon}$. Let $I\subset\incidences(\pts,\sphrs)$ be a good collection of incidences. Then
\begin{equation}\label{surfacesInR4ThmIntmEps}
|I| \leq C_1\Big(m^{\frac{k}{2k-1}}n^{\frac{2k-2}{2k-1}}+m+n\Big).
\end{equation}
The constant $C_1$ depends only on $C_0,k$ and $\epsilon$.
\end{conjecture}

Roughly speaking, Conjecture \ref{allMNConjectureWeak} should be provable as follows. In proving Theorem \ref{surfacesInR4Thm}, we construct partitioning polynomials $\{P_i\}$, $\{Q_i\}$ of degrees $D_i,\ E_i$, respectively. As discussed in Remark \ref{whereWeUseAssumtionRemark}, we need the bound
\begin{equation}\label{neededBoundDiE}
\sum_i (D_iE_i)^4 \lesssim m^{\frac{k}{2k-1}}n^{\frac{2k-2}{2k-1}}.
\end{equation}
Here, the numbers $\{D_i\}$ are essentially arbitrary positive integers satisfying $\sum D_i=D$ ($D$ is specified in \eqref{defnOfD}), and $E_i$ is given by \eqref{defnEj}. If $m\leq n^{\frac{2k+2}{3k}}$ then \eqref{neededBoundDiE} holds, while if $m> n^{\frac{2k+2}{3k}}$ then \eqref{neededBoundDiE} may fail.

However, one can get around this problem by using partitioning polynomials of lower degree (i.e.~making $D_i$ and $E_i$ smaller), so \eqref{neededBoundDiE} holds even when $m> n^{\frac{2k+2}{3k}}$. Let $\alpha=\log m/\log n$. The idea is to prove the theorem by induction on $\alpha$, starting with the base case $\alpha\leq  \frac{2k+2}{3k},$ which has already been handled by Theorem \ref{surfacesInR4Thm}.

Now, suppose the theorem has already been proved for all $\alpha<\alpha_0$, and let $\pts,\sphrs$ be collections of points and surfaces with $|\pts|=m,\ |\sphrs|=n$. Suppose that $\log m/\log n\leq\alpha_0+f(\alpha_0).$ The function $f(t)$ will be determined later; the key property is that $f(t)$ is continuous on $[\frac{2k+2}{3k},2]$ and $f(t)>0$ for $x<2$.

Let $D^\prime$ be a small power of $D$ ($D^\prime=D^{1/10}$ say). Instead of performing a partition using a polynomial of degree $D$, use a polynomial of degree $D^\prime$. A certain number of points and surfaces will enter each of the cells. There will be too many points and surfaces to apply the K\H{o}vari-S\'os-Tur\'an theorem directly. Luckily, however, if $m^\prime$ points and $n^\prime$ surfaces enter the cell, then $\log m^\prime / \log n^\prime\leq\alpha_0$ (provided the function $f(t)$ is chosen appropriately) so the induction hypothesis can be applied to bound the number of incidences inside each cell.

We must now bound the number of incidences occurring on the boundary of the partition. Define $E_i^\prime$ to be a small power of $E_i$. The incidences inside the second-level cells can again be bounded using the induction hypothesis.

It remains to bound the incidences occurring on the boundary of the second partition. Here we exploit the fact that $D_i^\prime$ and $E_i^\prime$ are much smaller than $D_i$ and $E_i$. In particular, the analogue of \eqref{neededBoundDiE} will hold with $D_i^\prime$ and $E_i^\prime$ in place of $D_i$ and $E_i$. This allows us to close the induction.

Analyzing the induction, we see that for any $\epsilon>0$, if $\pts,\sphrs$ are collections of points and surfaces with $|\pts|=m,\ |\sphrs|=n$, and if $m\leq n^{2-\epsilon},$ then we only apply the induction step $O_\epsilon(1)$ times before we are reduced to the base case $\alpha\leq\frac{2k+2}{3}$. Each time we apply the induction step we obtain an additional multiplicative constant in our bound. However, since we only perform this induction $O_\epsilon(1)$ times, the total contribution is still (a multiplicative) constant.

However, proving the above result would lengthen the exposition significantly and does not introduce any new ideas, so we prefer to state it as a conjecture rather than include the argument in this manuscript.

\subsection{Higher dimensions}
Extending Theorem \ref{surfacesInR4Thm} to dimensions higher than 4 appears to require some significant new ideas. In particular, if one tried to follow a similar proof strategy to prove an incidence theorem for 3--flats in $\RR^6$, one would need some sort of analogue of the crossing lemma for two-dimensional surfaces in $\RR^4$. The author is not aware of any statement of this type. It seems reasonable to conjecture that any proof of a Szem\'eredi-Trotter type theorem for 3--flats in $\RR^6$ will require a different proof strategy.

\bibliographystyle{amsplain}

\end{document}